\def\l{\lambda}
\newcommand{\supp}{\mathrm{supp}}
\newtheorem{theorem}{Theorem}[section]
\newtheorem{lemma}[theorem]{Lemma}
\newtheorem{proposition}[theorem]{Proposition}
\theoremstyle{definition}
\newtheorem{definition}[theorem]{Definition}
\theoremstyle{remark}
\newtheorem{remark}[theorem]{Remark}
\newtheorem{example}[theorem]{Example}
\numberwithin{equation}{section}
\title{Non-standard Green energy problems in the complex plane}
\author{Abey L\'{o}pez-Garc\'{i}a \qquad Alexander Tovbis}
\date{\today}
\begin{document}

\maketitle

\begin{abstract}
We consider several non-standard discrete and continuous Green energy problems in the complex plane and study the asymptotic relations between their solutions. In the discrete setting, we consider two problems; one with variable particle positions (within a given compact set) and variable particle masses, the other one with variable masses but prescribed positions. The mass of a particle is allowed to take any value in the range $0\leq m\leq R$, where $R>0$ is a fixed parameter in the problem. The corresponding continuous energy problems are defined on the space of positive measures $\mu$ with mass $\|\mu\|\leq R$ and supported on the given compact set, with an additional upper constraint that appears as a consequence of the prescribed positions condition. It is proved that the equilibrium constant and equilibrium measure vary continuously as functions of the parameter $R$ (the latter in the weak-star topology). In the unconstrained energy problem we present a greedy algorithm that converges to the equilibrium constant and equilibrium measure. In the discrete energy problems, it is shown that under certain conditions, the optimal values of the particle masses are uniquely determined by the optimal positions or prescribed positions of the particles, depending on the type of problem considered.

\smallskip

\textbf{Keywords:} Green function, Green energy, external field, variational problem with upper constraint, greedy energy sequence.

\smallskip

\textbf{MSC 2020:} 31A15, 30C85.
\end{abstract}

\section{Introduction}

Let $D$ be a domain (open connected set) in the extended complex plane $\overline{\mathbb{C}}=\mathbb{C}\cup\{\infty\}$ possessing a Green function $g(z,\zeta)$. Let $K\subset D\setminus\{\infty\}$ be a compact set and $f: K\rightarrow\mathbb{R}\cup\{+\infty\}$ be a lower semicontinuous function such that
\begin{equation}\label{admcond}
\mathrm{cap}(\{z\in K: f(z)<+\infty\})>0
\end{equation}
where $\mathrm{cap}(\cdot)$ denotes the logarithmic capacity. These assumptions hold throughout the paper. We will refer to $f$ as the \emph{external field}, and condition \eqref{admcond} is called \emph{admissibility} of $f$. Fix $R>0$, and let $\mathcal{B}_{R}(K)$ denote the space of all positive Borel measures supported on $K$ with total mass $\|\mu\|=\mu(K)\leq R$. For $\mu\in \mathcal{B}_{R}(K)$, let
\begin{equation}\label{def:energfunc}
J_{f}(\mu):=\iint g(z,\zeta)\,d\mu(z)\,d\mu(\zeta)+2\int f\, d\mu.
\end{equation}
The first term is the Green energy of $\mu$. Observe that the energy functional $J_{f}: \mathcal{B}_{R}(K)\rightarrow\mathbb{R}\cup\{+\infty\}$ is bounded below. This follows from the positivity of the Green function and the lower semicontinuity of $f$. Let
\begin{equation}\label{def:vf}
v_{f}:=\inf\{J_{f}(\mu): \mu\in \mathcal{B}_{R}(K)\}.
\end{equation}
By taking the zero measure $\mu=0$, we see that $J_{f}(0)=0$, therefore $v_{f}\leq 0$. The first problem we consider is the minimization of the functional $J_{f}$. The existence and uniqueness of a measure $\mu^{*}=\mu^{*}_{R}\in \mathcal{B}_{R}(K)$ satisfying $J_{f}(\mu^{*})=v_{f}$ is proved in Theorem~\ref{theo:fund}. 

The interesting case to consider is when $\min_{z\in K} f(z)<0$, since $\min_{z\in K} f(z)\geq 0$ trivially implies $\mu^{*}_{R}=0$. If $\min_{z\in K} f(z)<0$, as $R$ increases from zero the equilibrium measure $\mu_{R}^{*}$ may vary with $R$, but it eventually stabilizes to a constant measure (this can be deduced from formula \eqref{eq:reducspace}). This constant measure is therefore the minimizer of the energy functional $J_{f}$ among all positive and finite measures supported on $K$. This global minimizer is described in the so-called Gauss-Frostman theorem \cite[Theorem II.5.16]{SaffTotik}. In Theorem~\ref{theo:dynamic} it is proved that the equilibrium measure $\mu_{R}^{*}$ and equilibrium constant $v_{f}=v_{f}(R)$ depend continuously on $R$ (the measure in the weak-star topology).

In potential theory, the vast majority of the problems investigated deal with spaces of measures with fixed total mass, see for example the monographs \cite{BHS,Landkof,NikSor,Rans,SaffTotik}. In this sense, we refer to the variable-mass problems studied in this paper as non-standard energy problems.

Another problem considered is to minimize the functional $J_{f}$ over the space $\{\mu: 0\leq \mu\leq R\lambda\}$, where $\lambda$ is a probability measure on $K$ with finite Green energy (equivalently, finite logarithmic energy). For measures $\sigma$ and $\tau$ we write $\sigma\leq \tau$ if $\tau-\sigma$ is a positive measure. Let
\begin{equation}\label{def:vflambda}
v_{f,\lambda}:=\inf\{J_{f}(\mu): 0\leq \mu\leq R\lambda\}.
\end{equation}
Since $0\leq \mu\leq R\lambda$ implies $\mu\in \mathcal{B}_{R}(K)$, we have
\begin{equation}\label{ineqvfvfl}
v_{f}\leq v_{f,\lambda}\leq 0.
\end{equation}
The existence and uniqueness of a minimizer $\mu_{\lambda}^{*}=\mu_{\lambda,R}^{*}$ in this constrained problem is proved in Theorem~\ref{theo:fund:constr}. As in the previous unconstrained problem, $\mu_{\lambda}^{*}$ and $v_{f,\lambda}$ depend continuously on $R$, see Theorem~\ref{theo:dynamic:constr}.

We discuss now a pair of discrete energy problems that correspond to the continuous problems stated above, and describe the asymptotic relations between their solutions. 

Let $I:=[0,R]$. For each $N\geq 2$ we define the energy functional
\begin{align*}
E_{N,f}(z_{1},\ldots,z_{N},m_{1},\ldots,m_{N}) & :=\sum_{1\leq i\neq j\leq N}m_{i}\,m_{j}\, g(z_{i},z_{j}) +2N\sum_{j=1}^{N} m_{j}\, f(z_{j})\\
& =2\sum_{1\leq i<j\leq N}m_{i}\,m_{j}\,g(z_{i},z_{j}) +2N\sum_{j=1}^{N} m_{j}\, f(z_{j}) 
\end{align*}
for $(z_{1},\ldots,z_{N},m_{1},\ldots,m_{N})\in K^{N}\times I^{N}$. In this definition, we use the usual convention $0\cdot \infty=0$. Then $E_{N,f}:K^{N}\times I^{N}\longrightarrow\mathbb{R}\cup\{+\infty\}$ is well defined and it is trivially bounded below. Let $e_{N}=e_{N,f}$ denote the constant
\begin{equation}\label{def:eN}
e_{N}:=\inf_{K^{N}\times I^{N}} E_{N,f}(z_{1},\ldots,z_{N},m_{1},\ldots,m_{N}).
\end{equation}
By taking $m_{1}=\cdots=m_{N}=0$, we see that $e_{N}\leq 0$. Since $E_{N,f}$ is a lower semicontinuous function on the compact set $K^{N}\times I^{N}$, the constant \eqref{def:eN} is attained, i.e.
\begin{equation}\label{eq:eNattain}
e_{N}=E_{N,f}(z_{1,N},\ldots,z_{N,N}, m_{1,N},\ldots,m_{N,N})
\end{equation}
for some points $z_{j,N}\in K$, $m_{j,N}\in I$. 

We state our first main result:

\begin{theorem}\label{theo:main:1}
We have
\begin{equation}\label{eq:limeNvf}
\lim_{N\rightarrow\infty}\frac{e_{N}}{N^2}=v_{f}.
\end{equation}
For each $N\geq 2$, let $(z_{1,N},\ldots,z_{N,N},m_{1,N},\ldots,m_{N,N})$ be a configuration in $K^{N}\times I^{N}$ such that \eqref{eq:eNattain} holds, and let
\begin{equation}\label{def:muN}
\mu_{N}:=\frac{1}{N}\sum_{j=1}^{N}m_{j,N}\, \delta_{z_{j,N}}\in \mathcal{B}_{R}(K).
\end{equation}
Then the sequence $(\mu_{N})_{N=2}^{\infty}$ converges in the weak-star topology to $\mu^*$, the unique minimizer of $J_{f}$ on $\mathcal{B}_{R}(K)$.  
\end{theorem}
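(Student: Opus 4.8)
The plan is to prove the two assertions in tandem, using a standard "liminf/limsup" comparison between the discrete and continuous problems, together with a compactness argument for the measures $\mu_N$. The two halves of the energy limit \eqref{eq:limeNvf} go in opposite directions: the upper bound $\limsup_N e_N/N^2 \le v_f$ is obtained by discretizing a near-optimal continuous measure, while the lower bound $\liminf_N e_N/N^2 \ge v_f$ is obtained by passing to the limit in the discrete minimizers. The convergence $\mu_N \to \mu^*$ will then follow from the lower bound argument combined with the uniqueness of $\mu^*$ from Theorem~\ref{theo:fund}.

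\emph{Upper bound.} Fix $\varepsilon>0$ and pick $\nu\in\mathcal{B}_R(K)$ with $J_f(\nu)\le v_f+\varepsilon$. The issue is that $\nu$ may have atoms and the Green kernel is singular on the diagonal, so one cannot directly sample $N$ points from $\nu$. The standard fix is to first regularize: replace $\nu$ by a measure $\nu_\delta\in\mathcal{B}_R(K)$ with no atoms (or with a bounded density with respect to a suitable reference measure) and with $J_f(\nu_\delta)\le v_f+2\varepsilon$ — this uses lower semicontinuity of $f$ and the fact that one may smooth $\nu$ slightly inside $K$ (or that $K$ can be taken to carry such measures; if $\mathrm{cap}(K)=0$ the statement is degenerate and handled separately). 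Then choose points $z_{1},\dots,z_{N}\in K$ and masses $m_j = \|\nu_\delta\|/N$ "equidistributed" with respect to $\nu_\delta$ so that $\tfrac1N\sum_j m_j\delta_{z_j}$ is close to $\nu_\delta$ in weak-star sense, and so that the diagonal-free sum $\tfrac{1}{N^2}\sum_{i\ne j}m_i m_j g(z_i,z_j)$ converges to $\iint g\,d\nu_\delta\,d\nu_\delta$. For a fixed \emph{continuous} kernel this is routine; for the Green kernel the classical device (see e.g.\ \cite{SaffTotik}) is to bound $g$ below by a continuous truncation $g_M=\min(g,M)$, note $\iint g_M\,d\nu_\delta\,d\nu_\delta \uparrow \iint g\,d\nu_\delta\,d\nu_\delta$ as $M\to\infty$, handle the truncated kernel by equidistribution, and absorb the error. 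The linear term $2N\sum_j m_j f(z_j) = 2\int f\,d(\tfrac1N\sum m_j\delta_{z_j})$ converges to $2\int f\,d\nu_\delta$ by choosing the $z_j$ to avoid the poles of $f$ and using lower semicontinuity. Dividing $E_{N,f}$ by $N^2$ and letting $N\to\infty$ then gives $\limsup_N e_N/N^2 \le J_f(\nu_\delta)\le v_f+2\varepsilon$; since $\varepsilon$ is arbitrary, $\limsup_N e_N/N^2 \le v_f$.

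\emph{Lower bound and weak-star convergence.} Let $(z_{j,N},m_{j,N})$ attain $e_N$ and let $\mu_N$ be as in \eqref{def:muN}. Since $\|\mu_N\|=\tfrac1N\sum_j m_{j,N}\le R$ and $K$ is compact, $(\mu_N)$ is weak-star precompact in $\mathcal{B}_R(K)$; let $\mu$ be any weak-star limit along a subsequence. I want to show $\liminf_N e_N/N^2 \ge J_f(\mu)$. Write $E_{N,f}/N^2 = \tfrac{1}{N^2}\sum_{i\ne j}m_{i,N}m_{j,N}g(z_{i,N},z_{j,N}) + 2\int f\,d\mu_N$. For the linear term, lower semicontinuity of $f$ gives $\liminf_N \int f\,d\mu_N \ge \int f\,d\mu$ (using the Portmanteau-type statement for l.s.c.\ functions and weak-star convergence). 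For the quadratic term, use the truncation $g_M$: since $g\ge g_M$ and the diagonal terms are dropped anyway,
\[
\frac{1}{N^2}\sum_{i\ne j}m_{i,N}m_{j,N}g(z_{i,N},z_{j,N}) \ge \frac{1}{N^2}\sum_{i\ne j}m_{i,N}m_{j,N}g_M(z_{i,N},z_{j,N}) = \iint g_M\,d\mu_N\,d\mu_N - \frac{1}{N^2}\sum_{j}m_{j,N}^2 g_M(z_{j,N},z_{j,N}).
\]
The last (diagonal) correction is $O(M\cdot \max_j m_{j,N}^2/N^2)\cdot N = O(M R^2/N)\to 0$. Since $g_M$ is bounded and continuous off the diagonal (and l.s.c.\ everywhere — one may take $g_M$ continuous after a further standard mollification, or argue directly with l.s.c.\ kernels), $\iint g_M\,d\mu_N\,d\mu_N \to \iint g_M\,d\mu\,d\mu$ along the subsequence. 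Hence $\liminf_N E_{N,f}/N^2 \ge \iint g_M\,d\mu\,d\mu + 2\int f\,d\mu$; letting $M\to\infty$ and using monotone convergence yields $\liminf_N e_N/N^2 \ge J_f(\mu)\ge v_f$. Combined with the upper bound this proves \eqref{eq:limeNvf}, and moreover forces $J_f(\mu)=v_f$, so by the uniqueness in Theorem~\ref{theo:fund}, $\mu=\mu^*$. Since every weak-star convergent subsequence of $(\mu_N)$ has the same limit $\mu^*$ and the sequence lives in the weak-star compact set $\mathcal{B}_R(K)$, the whole sequence converges weak-star to $\mu^*$.

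\emph{Main obstacle.} The technical heart is the handling of the singular Green kernel in both directions: making the equidistribution construction in the upper bound actually converge to the Green energy of a regularized measure (the standard truncation-plus-mollification argument, but one must be careful that $f$'s poles and $g$'s diagonal do not interact badly), and, in the lower bound, controlling the diagonal correction term and justifying $\iint g_M\,d\mu_N\,d\mu_N \to \iint g_M\,d\mu\,d\mu$ for a kernel that is only l.s.c.\ rather than continuous. Both are classical (this is exactly the machinery behind the discretization of logarithmic energy), but they are where the real work lies; everything else is soft functional analysis.
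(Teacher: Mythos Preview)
Your lower-bound argument is essentially the paper's: truncate $g$ at level $M$, pass to a weak-star convergent subsequence of $(\mu_N)$, bound the diagonal correction by $MR^2/N$, then let $M\to\infty$ via monotone convergence to get $J_f(\tau)\le\lim_k e_{N_k}/N_k^2$. One small point: your worry that $g_M$ is ``only l.s.c.''\ is unfounded. Since $g$ is continuous off the diagonal and blows up to $+\infty$ on it, $g_M=\min(g,M)$ is genuinely continuous on $K\times K$, and the paper uses this directly.

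Your upper bound, however, is both overcomplicated and has a real gap as written. The truncation $g_M\le g$ gives a \emph{lower} bound on the off-diagonal discrete sum, not an upper bound, so it cannot be used to control $e_N$ from above; the ``error'' $\tfrac{1}{N^2}\sum_{i\ne j}(g-g_M)(z_i,z_j)\ge 0$ goes the wrong way and there is no mechanism to absorb it. The regularization step (removing atoms from $\nu$) and the equidistribution construction are also unnecessary. The paper replaces all of this by a one-line averaging trick: for any $\mu\in\mathcal{B}_R(K)$ with $\|\mu\|>0$, take every $m_j=\|\mu\|$ and integrate the pointwise inequality
\[
e_N\ \le\ 2\|\mu\|^2\sum_{1\le i<j\le N}g(z_i,z_j)+2N\|\mu\|\sum_{j=1}^N f(z_j)
\]
against the product probability measure $d\sigma(z_1)\cdots d\sigma(z_N)$, $\sigma=\mu/\|\mu\|$. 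Because the sum is diagonal-free this gives immediately
\[
e_N\ \le\ N(N-1)\iint g\,d\mu\,d\mu+2N^2\int f\,d\mu,
\]
hence $e_N/N^2\le J_f(\mu)$ for \emph{every} $\mu\in\mathcal{B}_R(K)$ (the case $\mu=0$ is trivial since $e_N\le 0$). No atom-removal, no construction of specific points, no truncation. Incidentally, your choice $m_j=\|\nu_\delta\|/N$ looks like a slip: with that scaling $\tfrac{1}{N}\sum_j m_j\delta_{z_j}$ has total mass $\|\nu_\delta\|/N\to 0$, not $\|\nu_\delta\|$.
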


In Theorem \ref{theo:basicprop} we describe several basic properties of the minimizers of the energy functionals $J_{f}$ and $E_{N,f}$, see also Remark \ref{rem:lineareqns}. A natural question that arises is whether the separate probability distributions $\frac{1}{N}\sum_{j}\delta_{z_{j,N}}$ and $\frac{1}{N}\sum_{j}\delta_{m_{j,N}}$ have a limit under appropriate conditions on $f$, and if so, how to describe the limits. In the trivial case $f\geq 0$ on $K$, then clearly $e_{N}=0$ for all $N$, and so the values $m_{1,N},\ldots,m_{N,N}$ must all be zero except possibly one of them; therefore $\frac{1}{N}\sum_{j}\delta_{m_{j,N}}$ converges to $\delta_{0}$. But it is easy to see that in this case the measure $\frac{1}{N}\sum_{j}\delta_{z_{j,N}}$ is not necessarily convergent. It is not clear to us whether both probability distributions have a limit if $f<0$ on a sufficiently large subset of $K$. We leave this issue as an open question. 

In Section~\ref{sec:greedy} we describe a greedy algorithm in which a functional of only two variables (instead of $2N$ variables as in $E_{N,f}$) is minimized at each step, and generates an infinite sequence $((a_{N},m_{N}))_{N=1}^{\infty}$ with the property that the configurations $(a_{1},\ldots,a_{N},m_{1},\ldots,m_{N})$ behave asymptotically like the optimal configurations $(z_{1,N},\ldots,z_{N,N}, m_{1,N},\ldots,m_{N,N})$ analyzed in Theorem~\ref{theo:main:1}.

Finally, we discuss the discrete problem related to the continuous problem with upper constraint. Consider a sequence of point configurations 
\begin{equation}\label{triangarray}
(x_{j,N})_{1\leq j\leq l_{N}},\quad N\geq 1,\quad l_{N}\geq 2,
\end{equation}
in the compact set $K\subset D\setminus\{\infty\}$, where $l_{N}/N\rightarrow 1$ as $N\rightarrow\infty$, satisfying the following properties:
\begin{itemize}
\item[P1)] The sequence of discrete measures $\lambda_{N}:=\frac{1}{l_N}\sum_{j=1}^{l_N}\delta_{x_{j,N}}$ converges in the weak-star topology to a probability measure $\lambda$ on $K$.
\item[P2)] For every $N\geq 1$, there exists a partition $\mathcal{P}_N=\{V_{1,N},\ldots,V_{l_N,N}\}$ of $K$ (i.e., the sets $V_{j,N}$ are pairwise disjoint and their union is $K$) such that $x_{j,N}\in V_{j,N}$, $1\leq j\leq l_{N}$, and we have:
\begin{itemize}
\item[i)] $\lambda(V_{j,N})\leq 1/N$ for every $1\leq j\leq l_{N}$.
\item[ii)] There exists a sequence $(\kappa_{N})_{N=1}^{\infty}$ of positive numbers that converges to zero such that $\mathrm{diam}(V_{j,N})=\sup\{|x-y|: x,y\in V_{j,N}\}\leq \kappa_N$ for all $1\leq j\leq l_N$.
\end{itemize} 
\item[P3)] There exists an absolute constant $C>0$ such that $|x_{i,N}-x_{j,N}|\geq \frac{C}{N}$ for all $i\neq j$, $N\geq 1$.
\item[P4)] $\lambda$ satisfies $\iint g(z,\zeta)\,d\lambda(z)\,d\lambda(\zeta)<\infty$. 
\end{itemize}

We call $\lambda$ the \emph{reference} measure. A simple construction of configurations \eqref{triangarray} and partitions satisfying all the properties $\mathrm{P1)}$--$\mathrm{P4)}$ is given in Example \ref{examplemain} in Section~\ref{sec:constrained}. Under the assumptions $\mathrm{P1)}$--$\mathrm{P4)}$, we define the extremal constants
\begin{equation}\label{defdN}
d_{N}:=\inf\{E_{l_{N},f}(x_{1,N},\ldots,x_{l_{N},N},m_{1},\ldots,m_{l_{N}}): m_{1},\ldots,m_{l_{N}}\in I\}.
\end{equation}
Note that here we only minimize in the variables $(m_{1},\ldots,m_{l_{N}})\in I^{l_N}=[0,R]^{l_N}$ and the particle positions $x_{j,N}$ are fixed. It is obvious that we have
\begin{equation}\label{ineqeNdN}
e_{l_{N}}\leq d_{N}\leq 0,\qquad N\geq 1.
\end{equation}
Observe that if $f$ takes finite values at the points $x_{j,N}$, then in \eqref{defdN} we are actually minimizing the quadratic form plus linear form $m\mapsto m\,G_{N}\,m^{t}+F_{N}\,m^{t}$, $m=(m_{1},\ldots,m_{l_{N}})$, where $G_{N}$ is the symmetric matrix of size $l_{N}\times l_{N}$
\[
G_{N}=\begin{pmatrix}
0 & g(x_{1,N},x_{2,N}) & \cdots & g(x_{1,N},x_{l_N,N})\\
g(x_{2,N},x_{1,N}) & 0 & \cdots & g(x_{2,N},x_{l_N,N})\\
\vdots & \vdots & \ddots & \vdots \\
g(x_{l_{N},N},x_{1,N}) & g(x_{l_{N},N},x_{2,N}) & \cdots & 0
\end{pmatrix}
\]
and $F_{N}=2\,l_{N} (f(x_{1,N}),\ldots,f(x_{l_{N},N}))$.

Our final result describes, under certain conditions, the asymptotics of $d_{N}$ and the discrete measure $\frac{1}{l_{N}}\sum_{j=1}^{l_{N}}\widehat{m}_{j,N}\,\delta_{x_{j,N}}$, where $\widehat{m}_{j,N}$ are optimal constants for \eqref{defdN}. The limits are determined by $v_{f,\lambda}$ and $\mu_{\lambda}^{*}$, where $\lambda$ is the reference measure. In this paper the logarithmic potential of a measure $\mu$ is denoted
\[
U^{\mu}(z)=\int\log\frac{1}{|z-t|}\,d\mu(t),
\]
$\mathrm{Co}(A)$ denotes the convex hull of the set $A$, and $B(z,r)$ denotes the open disk with center $z\in\mathbb{C}$ and radius $r>0$. 

Before formulating the following Theorem \ref{theo:main:upcon}, we introduce some notions. First, let $\Gamma\subset D\setminus\{\infty\}$ be a curve that has a continuous and one-to-one parametrization $\gamma:[a,b]\rightarrow\Gamma$. We say that $\Gamma$ is locally monotonic if there exists a constant $\rho>0$ such that $B(x,\rho)\cap\Gamma$ is a connected set for  all $x\in\Gamma$ and, moreover, the distance $|y-x|$ is monotonically decreasing as a point $y\in B(x,\rho)\cap\Gamma$ traverses $\Gamma$ towards the point $x$. More specifically, for all $x\in\Gamma$ the set $I_{x}:=\gamma^{-1}(B(x,\rho)\cap\Gamma)\subset  [a,b]$  is an interval  and if $\gamma(s)=x$, then the function $r\mapsto|\gamma(r)-x|$ is non-increasing on the interval $[a,s]\cap I_{x}$ and it is non-decreasing on the interval $[s,b]\cap I_{x}$. We also say that a  partition $\mathcal P_N$ from  property $\mathrm{P2)}$ is monotonic if the sets $V_{j,N}\in \mathcal P_N$
 satisfy $\mathrm{Co}(\gamma^{-1}(V_{j,N}))\cap\mathrm{Co}(\gamma^{-1}(V_{i,N}))=\emptyset$ for all $i\neq j$. 

\begin{theorem}\label{theo:main:upcon}
Assume that $K\subset \Gamma\subset D\setminus\{\infty\}$, where $\Gamma$ is a locally monotonic curve that has a continuous and one-to-one parametrization $\gamma:[a,b]\rightarrow\Gamma$. Assume further that $f$ and $U^{\lambda}$ are continuous on $K$, and the partitions $\mathcal P_N$  in property $\mathrm{P2)}$  are monotonic for all $N$. Then
\begin{equation}\label{asympdNnew}
\lim_{N\rightarrow\infty}\frac{d_{N}}{N^2}=v_{f,\lambda}.
\end{equation}
Additionally, if $\{\widehat{m}_{j,N}\}_{j=1}^{l_{N}}\subset[0,R]$ is a collection of numbers such that 
\[
d_{N}=E_{l_{N},f}(x_{1,N},\ldots,x_{l_{N},N},\widehat{m}_{1,N},\ldots,\widehat{m}_{l_{N},N}),\qquad\mbox{for all}\,\,\,\,N\geq 1,
\]
then the sequence of measures $\widehat{\mu}_{N}=\frac{1}{l_{N}}\sum_{j=1}^{l_{N}}\widehat{m}_{j,N}\,\delta_{x_{j,N}}$ converges in the weak-star topology to $\mu_{\lambda}^{*}$.
\end{theorem}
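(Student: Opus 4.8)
The overall plan is to bracket $d_N/l_N^2$ between matching bounds,
\[
\limsup_{N\to\infty}\frac{d_N}{l_N^2}\le v_{f,\lambda},\qquad \liminf_{N\to\infty}\frac{d_N}{l_N^2}\ge v_{f,\lambda},
\]
which with $l_N/N\to 1$ gives \eqref{asympdNnew}, and then to extract the convergence $\widehat\mu_N\to\mu_\lambda^*$ from the uniqueness of the minimizer in Theorem~\ref{theo:fund:constr}. Throughout I write $g(z,\zeta)=\log\frac1{|z-\zeta|}+h(z,\zeta)$, where $h$ is continuous on a neighbourhood of $K\times K$ (diagonal included), so that the only singular feature of the kernel is the logarithm; also, since $K$ is compact and $\mu_\lambda^*\le R\lambda$, property $\mathrm{P4)}$ gives that $\mu_\lambda^*$ has finite logarithmic energy, so $\iint_{0<|z-\zeta|<\delta}\log\frac1{|z-\zeta|}\,d\mu_\lambda^*\,d\mu_\lambda^*\to 0$ as $\delta\to 0^+$; finally, continuity of $U^{\lambda}$ on the compact $K$ yields uniform smallness of $\lambda$-masses of small disks, $\sup_{x\in K}\lambda(B(x,r))\to 0$ as $r\to 0^+$, hence the same for $\mu_\lambda^*$.

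\emph{Lower bound and limit measure.} It suffices to prove: (i) every weak-star cluster point $\nu$ of $(\widehat\mu_N)$ satisfies $\nu\le R\lambda$; and (ii) if $\widehat\mu_{N_k}\to\nu$ weak-star then $\liminf_k d_{N_k}/l_{N_k}^2\ge J_f(\nu)$. Granting these and choosing $(N_k)$ to realize $\liminf_N d_N/l_N^2$, pass to a further subsequence with weak-star limit $\nu$; then $\liminf_N d_N/l_N^2\ge J_f(\nu)\ge v_{f,\lambda}$. For (i), note $\widehat\mu_N(G)\le R\lambda_N(G)$ for every Borel $G$ (since $\widehat m_{j,N}\le R$), so for open $G$ with $\lambda(\partial G)=0$, property $\mathrm{P1)}$ gives $\nu(G)\le\liminf_k\widehat\mu_{N_k}(G)\le R\lim_k\lambda_{N_k}(G)=R\lambda(G)$; letting $G$ run over disks with $\lambda$-null boundary shows $\nu\le R\lambda$. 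For (ii), write $d_N/l_N^2=\iint_{z\ne\zeta}g\,d\widehat\mu_N\,d\widehat\mu_N+2\int f\,d\widehat\mu_N$; the second term converges to $2\int f\,d\nu$ by continuity of $f$, and for the first, fixing $M>0$ and $g_M:=\min(g,M)$,
\[
\iint_{z\ne\zeta}g\,d\widehat\mu_N\,d\widehat\mu_N\ \ge\ \iint g_M\,d\widehat\mu_N\,d\widehat\mu_N-\frac{M}{l_N^2}\sum_{j=1}^{l_N}\widehat m_{j,N}^{\,2}\ \ge\ \iint g_M\,d\widehat\mu_N\,d\widehat\mu_N-\frac{MR^2}{l_N};
\]
since $\widehat\mu_{N_k}\times\widehat\mu_{N_k}\to\nu\times\nu$ weak-star and $g_M$ is bounded and continuous, the middle term tends to $\iint g_M\,d\nu\,d\nu$, and letting $M\to\infty$ (monotone convergence, $g\ge 0$) gives $\liminf_k\iint_{z\ne\zeta}g\,d\widehat\mu_{N_k}\,d\widehat\mu_{N_k}\ge\iint g\,d\nu\,d\nu$, hence (ii).

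\emph{Upper bound.} Using $\mathcal P_N$ from $\mathrm{P2)}$ and $\lambda(V_{j,N})\le 1/N$, put $m_{j,N}:=N\,\mu_\lambda^*(V_{j,N})\in[0,R]$, so $d_N\le E_{l_N,f}(x_{1,N},\dots,x_{l_N,N},m_{1,N},\dots,m_{l_N,N})$. Dividing by $l_N^2$ and using $\mathrm{diam}(V_{j,N})\le\kappa_N\to 0$, uniform continuity of $f$ and $h$, $\sum_j m_{j,N}=N\|\mu_\lambda^*\|$ and $N^2/l_N^2\to 1$, the linear part converges to $2\int f\,d\mu_\lambda^*$ and the $h$-part of the quadratic form to $\iint h\,d\mu_\lambda^*\,d\mu_\lambda^*$ (the omitted diagonal terms are $O(1/l_N)$). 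It remains to prove
\[
\limsup_{N\to\infty}\ \frac1{l_N^2}\sum_{i\ne j}m_{i,N}m_{j,N}\log\frac1{|x_{i,N}-x_{j,N}|}\ \le\ \iint\log\frac1{|z-\zeta|}\,d\mu_\lambda^*(z)\,d\mu_\lambda^*(\zeta).
\]
Fix $\delta\in(0,\rho)$ and split the pairs $i\ne j$ into \emph{far} pairs ($|x_{i,N}-x_{j,N}|\ge\delta$) and \emph{near} pairs ($|x_{i,N}-x_{j,N}|<\delta$). On far pairs, uniform continuity of $t\mapsto\log\frac1{|t|}$ on $\{|t|\ge\delta/2\}$ and $\mathrm{diam}(V_{j,N})\le\kappa_N$ let one bound $\log\frac1{|x_{i,N}-x_{j,N}|}$ by the $\mu_\lambda^*\times\mu_\lambda^*$-average of $\log\frac1{|z-\zeta|}$ over $V_{i,N}\times V_{j,N}$ plus an error $\omega(\kappa_N)\to 0$; using $m_{i,N}m_{j,N}=N^2\mu_\lambda^*(V_{i,N})\mu_\lambda^*(V_{j,N})$, the nonnegativity of the diagonal blocks $\iint_{V_{i,N}\times V_{i,N}}\log\frac1{|z-\zeta|}\,d\mu_\lambda^*\,d\mu_\lambda^*$ for large $N$, and $\sum_j m_{j,N}=N\|\mu_\lambda^*\|$, the far part is at most $\frac{N^2}{l_N^2}\iint\log\frac1{|z-\zeta|}\,d\mu_\lambda^*\,d\mu_\lambda^*+\frac{N^2}{l_N^2}\|\mu_\lambda^*\|^2\,\omega(\kappa_N)$, whose $\limsup$ does not exceed the desired bound. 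The near part is where the monotonicity hypotheses are essential: local monotonicity of $\Gamma$ forces $B(x_{i,N},\delta)\cap\Gamma$ to be a connected arc on which Euclidean distance to $x_{i,N}$ is monotone on each side of $x_{i,N}$, while monotonicity of $\mathcal P_N$ makes the cells meeting this arc a contiguous block in the linear order induced by the disjoint parameter intervals $\mathrm{Co}(\gamma^{-1}(V_{j,N}))$; consequently, for each $i$ and each $t\le\delta$, the cells $V_{j,N}$ with $x_{j,N}\in B(x_{i,N},t)$ are consecutive and their union lies in $B(x_{i,N},t+\kappa_N)$. Using this localization together with the separation bound $\mathrm{P3)}$ and the uniform smallness of $\mu_\lambda^*(B(x,r))$ (continuity of $U^{\lambda}$), the near part of the sum is bounded by a quantity which, after $\limsup$ in $N$ and then $\delta\to 0^+$, tends to $0$ — the comparison being made against $\iint_{0<|z-\zeta|<c\delta}\log\frac1{|z-\zeta|}\,d\mu_\lambda^*\,d\mu_\lambda^*$, which is small by finiteness of the logarithmic energy. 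Letting $\delta\to 0^+$ establishes the displayed inequality, hence $\limsup_N d_N/l_N^2\le \iint g\,d\mu_\lambda^*\,d\mu_\lambda^*+2\int f\,d\mu_\lambda^*=J_f(\mu_\lambda^*)=v_{f,\lambda}$.

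\emph{Conclusion.} The two bounds give $\lim_N d_N/l_N^2=v_{f,\lambda}$, hence \eqref{asympdNnew}. If $\nu$ is any weak-star cluster point of $(\widehat\mu_N)$, say $\widehat\mu_{N_k}\to\nu$, then by (i)--(ii) $\nu$ is admissible for \eqref{def:vflambda} and
\[
v_{f,\lambda}=\lim_k\frac{d_{N_k}}{l_{N_k}^2}\ge J_f(\nu)\ge v_{f,\lambda},
\]
so $J_f(\nu)=v_{f,\lambda}$, and uniqueness in Theorem~\ref{theo:fund:constr} gives $\nu=\mu_\lambda^*$. Thus $\mu_\lambda^*$ is the unique weak-star cluster point of the mass-bounded sequence $(\widehat\mu_N)$, so $\widehat\mu_N\to\mu_\lambda^*$ weak-star. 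The principal difficulty is the near-diagonal estimate in the upper bound: it is exactly there that the hypotheses — $\Gamma$ locally monotonic, $\mathcal P_N$ monotonic, $\mathrm{P3)}$, and $U^{\lambda}$ continuous — are combined to prevent the discrete logarithmic interactions of the many almost-coincident particles from overshooting the continuous Green energy.
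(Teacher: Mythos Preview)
Your overall architecture---lower bound via the truncation $g_M$ and weak-star compactness, upper bound by testing on the discretization $m_{j,N}=N\mu_\lambda^*(V_{j,N})$, and convergence of $\widehat\mu_N$ from uniqueness of the constrained minimizer---matches the paper's, and the reduction of the upper bound to controlling the logarithmic double sum is correct. You also correctly flag the near-diagonal estimate as the heart of the matter.

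The gap is precisely there. You assert that the near part ``is bounded by a quantity which, after $\limsup$ in $N$ and then $\delta\to 0^+$, tends to $0$,'' the comparison being against $\iint_{0<|z-\zeta|<c\delta}\log\frac1{|z-\zeta|}\,d\mu_\lambda^*\,d\mu_\lambda^*$. But this comparison is never carried out, and it does not hold cell-by-cell: if $x_{i,N},x_{j,N}$ sit at the inner ends of adjacent cells then $\log\frac1{|x_{i,N}-x_{j,N}|}$ can exceed the $\mu_\lambda^*\times\mu_\lambda^*$-average of $\log\frac1{|z-\zeta|}$ over $V_{i,N}\times V_{j,N}$ by as much as $\log(N\kappa_N/C)$, and nothing in your list of ingredients forces these overshoots to cancel globally. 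The paper does not attempt a direct comparison with the near-diagonal energy. Instead it compares the full discrete sum $u_N$ with the \emph{half-discretized} quantity
\[
v_N=\frac{1}{l_N}\sum_{j=1}^{l_N}m_{j,N}\,U^{\mu_\lambda^*}(x_{j,N}),
\]
which converges to the target because $U^{\mu_\lambda^*}$ is continuous on $K$ (via $\mu_\lambda^*\le R\lambda$ and continuity of $U^\lambda$). The difference $u_N-v_N$ then splits into per-$(i,j)$ errors of the form $\frac{m_{i,N}}{N}\log\bigl(|x_{j,N}-\gamma(\xi_{i,N})|/|x_{j,N}-x_{i,N}|\bigr)$ by the intermediate value theorem in the $t$-integral. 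For fixed $j$, the monotonicity hypotheses bound each such error on the contiguous block inside $B(x_{j,N},\rho)$ by $\log\bigl(|x_{j,N}-\gamma(\alpha_{i,N})|/|x_{j,N}-\gamma(\beta_{i,N})|\bigr)$, and these \emph{telescope} along the block to a single logarithm of size $O(\log N)$ (using $\mathrm{P3)}$ for the innermost term). Outside $B(x_{j,N},\rho)$ a crude triangle-inequality bound of order $\kappa_N/\rho$ suffices, and the finitely many terms with $|i-j|\le 2$ are handled by Lemma~\ref{lemreferee}. This telescoping is the actual mechanism by which local monotonicity and monotonic partitions enter, and it is the idea missing from your near-part sketch.
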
 

Note that for the extremal values $d_{N}$ in \eqref{defdN} to be defined, we only need the configurations \eqref{triangarray} to be given, and none of the properties $\mathrm{P1)}$--$\mathrm{P4)}$ are really necessary. However, it is natural to assume some regularity conditions in the problem setting in order to obtain some non-trivial asymptotic results. In the general framework of Theorem \ref{theo:main:upcon}, an interesting question is whether it is possible, given a sequence of point configurations \eqref{triangarray} satisfying properties $\mathrm{P1)}$, $\mathrm{P3)}$, $\mathrm{P4)}$, to construct partitions $\mathcal{P}_{N}$ satisfying property $\mathrm{P2)}$. It seems to us that this is not an easy problem. However, if one leaves aside properties $\mathrm{P1)}$, $\mathrm{P3)}$, $\mathrm{P4)}$, there is a standard procedure to construct monotonic partitions satisfying property $\mathrm{P2)}$, see Example \ref{examplemain1} in Section \ref{sec:constrained}.

We expect the asymptotic results in Theorem~\ref{theo:main:upcon} to be valid under weaker conditions, especially related to the geometry of the set $K$. This issue, and some applications of our results to certain problems in \cite{KuijTov,TovWang} involving Green potentials, will be addressed in a future work. As in Theorem~\ref{theo:main:1}, we leave here as an open question whether the probability measure $\frac{1}{l_{N}}\sum_{j=1}^{l_{N}}\delta_{\widehat{m}_{j,N}}$ converges under appropriate conditions on $f$. See Remark~\ref{rem:altconfig} for an alternative and slightly simpler construction of nodes $(x_{j,N})$ that yields the same asymptotic results in Theorem~\ref{theo:main:upcon}. 

Finally, we want to remark that energy problems with upper constraint have been very useful as a tool to solve important problems in approximation theory, numerical linear algebra, and other areas. We refer the interested reader to the seminal works of Rakhmanov \cite{Rakh}, Dragnev--Saff \cite{DragnevSaff}, and other authors  \cite{BeckGry,BeckKuij,KuijVan}. Problems with mass constraint of the form $\|\mu\|\leq R$ are more common in mathematical physics, see for example Lieb's survey \cite{Lieb}.

This paper is organized as follows. In Section~\ref{section:freelocmass} we discuss the unconstrained variational problem and prove Theorem~\ref{theo:main:1}. In Section~\ref{sec:greedy} we discuss the greedy approximation construction in the unconstrained case. In Section~\ref{sec:constrained} we analyze the constrained variational problem and prove Theorem~\ref{theo:main:upcon}.

\section{Variational problem in a ball, and discrete energy with free particle positions and masses}\label{section:freelocmass}

The Green potential of $\mu$ is denoted
\[
U^{\mu}_{G}(z)=\int g(z,\zeta)\,d\mu(\zeta).
\]
Let
\begin{equation}\label{def:betakappa}
\beta:=\min_{z\in K} f(z),\qquad \kappa:=\min_{K\times K} g(z,\zeta)>0.
\end{equation}
If $\beta\geq 0$, then trivially $v_{f}=0$. Assume now that $\beta<0$. Then for $\mu\in \mathcal{B}_{R}(K)$, we have
\[
J_{f}(\mu)\geq 2\int f\,d\mu\geq 2\beta\mu(K)\geq 2\beta R.
\]
Additionally, if $\mu$ is any finite positive measure on $K$ (not necessarily with mass $\leq R$), then
\begin{equation}\label{eq:ineqenerg}
J_{f}(\mu)\geq \kappa (\mu(K))^2+2\beta \mu(K)=\kappa\left(\mu(K)+\frac{\beta}{\kappa}\right)^2-\frac{\beta^2}{\kappa}\geq -\frac{\beta^2}{\kappa}.
\end{equation}
We deduce that if $\beta<0$, then
\[
\max\left\{2\beta R, -\frac{\beta^2}{\kappa}\right\}\leq v_{f}\leq 0.
\] 
Observe also that if $\beta<0$ and $\mu$ is a finite positive measure on $K$ with $\|\mu\|>-\frac{2\beta}{\kappa}$, then from the first inequality in \eqref{eq:ineqenerg} we get $J_{f}(\mu)>0\geq v_{f}$. In conclusion, in the case $\beta<0$ we have
\begin{equation}\label{eq:reducspace}
v_{f}=\inf\{J_{f}(\mu): \mu\in \mathcal{B}_{R^{*}}(K)\},\qquad R^{*}=\min\{R, -2\beta/\kappa\}.
\end{equation}

The following result is a bounded mass version of the Gauss-Frostman theorem (Theorem II.5.6 in \cite{SaffTotik}). In the Gauss-Frostman theorem the energy minimization takes place in the space of all positive and finite Borel measures supported on $K$. We follow closely the argument in the proof of that theorem. As it was already mentioned in the introduction, for $R>0$ sufficiently large, the minimizer of the energy functional $J_{f}$ in the ball $\mathcal{B}_{R}(K)$ coincides with the equilibrium measure $\tau_{0}$ in the Gauss-Frostman theorem, which satisfies the inequalities
\begin{align*}
U^{\tau_{0}}_{G}(z)+f(z) & \geq 0 \quad \mbox{q.e. on}\,\,K,\\
U^{\tau_{0}}_{G}(z)+f(z) & \leq 0 \quad \mbox{for all}\,\,z\in \mathrm{supp}(\tau_{0}).
\end{align*}
Results on existence and uniqueness of equilibrium measures with external fields in spaces of positive measures with fixed mass are numerous in the literature. A good compendium of such results can be found in \cite{BHS,SaffTotik}. For recent developments in this theory, in addition to the references already cited, see \cite{BCC,BrauDragSaff,DragOriSaffWiel,KuijTov,Lop,Zorii1,Zorii2,Zorii3}. 

\begin{theorem}\label{theo:fund}
Let $R>0$. For an admissible external field $f$, there exists a unique measure $\mu^*\in \mathcal{B}_{R}(K)$ that minimizes the energy functional \eqref{def:energfunc}, i.e., satisfying $J_{f}(\mu^*)=v_{f}$. The following inequalities hold: 
\begin{align}
U^{\mu^{*}}_{G}(z)+f(z) & \geq C_{f} \quad \mbox{q.e. on}\,\,K,\label{varineq1}\\
U^{\mu^{*}}_{G}(z)+f(z) & \leq C_{f} \quad \mbox{for all}\,\,z\in \mathrm{supp}(\mu^{*}),\label{varineq2}
\end{align}
where 
\begin{equation}\label{def:Cf}
C_{f}=\frac{1}{R}\left(\iint g(z,\zeta)\,d\mu^{*}(z)\,d\mu^{*}(\zeta)+\int f\,d\mu^{*}\right).
\end{equation}
We have $C_{f}\leq 0$, and $C_{f}=0$ if $\|\mu^{*}\|<R$. The support of $\mu^{*}$ is contained in the set $\{z\in K: f(z)\leq C_{f}-\kappa\,\|\mu^{*}\|\}$, where $\kappa$ is defined in \eqref{def:betakappa}.
\end{theorem}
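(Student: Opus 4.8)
The plan is to follow the classical Gauss--Frostman argument adapted to the bounded-mass ball $\mathcal{B}_R(K)$, in roughly three stages: existence, variational inequalities, and the formula and sign of $C_f$. For \textbf{existence}, I would first use \eqref{eq:reducspace} to replace $\mathcal{B}_R(K)$ by $\mathcal{B}_{R^*}(K)$ in the case $\beta<0$ (the case $\beta\geq 0$ being trivial with $\mu^*=0$). The set $\mathcal{B}_{R^*}(K)$ is weak-star compact, and one checks that $\mu\mapsto J_f(\mu)$ is lower semicontinuous in the weak-star topology: the external-field term $\int f\,d\mu$ is lower semicontinuous because $f$ is lower semicontinuous (approximate $f$ from below by continuous functions and apply monotone convergence), and the Green energy term is lower semicontinuous by the usual principle of descent, writing $g(z,\zeta)=\sup_n g_n(z,\zeta)$ for an increasing sequence of continuous kernels. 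A minimizing sequence then has a weak-star convergent subsequence whose limit $\mu^*$ attains $v_f$. For \textbf{uniqueness}, I would use the strict convexity of the Green-energy bilinear form on finite signed measures of total mass zero and finite energy: if $\mu_1^*,\mu_2^*$ are two minimizers, the parallelogram-type identity gives $J_f\big(\tfrac{\mu_1^*+\mu_2^*}{2}\big)=\tfrac12 J_f(\mu_1^*)+\tfrac12 J_f(\mu_2^*)-\tfrac14 I_g(\mu_1^*-\mu_2^*)$, where $I_g$ denotes Green energy; since $\tfrac{\mu_1^*+\mu_2^*}{2}\in\mathcal{B}_R(K)$, minimality forces $I_g(\mu_1^*-\mu_2^*)=0$, and positive-definiteness of the Green kernel on zero-charge measures yields $\mu_1^*=\mu_2^*$.

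For the \textbf{variational inequalities} \eqref{varineq1}--\eqref{varineq2}, I would perturb $\mu^*$ within $\mathcal{B}_R(K)$. Fix a compact set $S\subseteq K$ of positive capacity on which $U^{\mu^*}_G+f\leq C_f-\varepsilon$ fails to go the right way, and let $\nu$ be a suitable probability measure on $S$ with finite Green energy (or, for \eqref{varineq2}, a small piece of $\mu^*$ itself). Consider $\mu_t=\mu^*+t(\nu-c\,\mu^*/\|\mu^*\|)$ or simply $\mu_t=(1-t)\mu^*+t\,\sigma$ for admissible $\sigma$, compute $\frac{d}{dt}J_f(\mu_t)\big|_{t=0^+}\geq 0$, and read off that $\int(U^{\mu^*}_G+f)\,d\sigma\geq \int(U^{\mu^*}_G+f)\,d\mu^*$ for all $\sigma\in\mathcal{B}_R(K)$ that are ``reachable,'' i.e.\ for all probability-type perturbations. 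The constant $C_f$ is pinned down by testing with $\sigma=\mu^*$ and by the fact that the mass of $\mu^*$ is allowed to vary: if $\|\mu^*\|<R$ one may scale $\mu^*$ up slightly, and the first-order optimality in the scaling parameter forces $C_f=0$; in general the variational derivative with respect to adding mass gives the threshold $C_f$. To get \eqref{def:Cf}, integrate \eqref{varineq2} against $\mu^*$ (valid $\mu^*$-a.e.\ since $\supp(\mu^*)$ is where it holds) to obtain $\iint g\,d\mu^*d\mu^* + \int f\,d\mu^* \leq C_f\|\mu^*\|$, and integrate \eqref{varineq1} against $\mu^*$ (quasi-everywhere suffices because $\mu^*$ has finite energy, hence puts no mass on polar sets) to obtain the reverse inequality; equality then gives $C_f\|\mu^*\|=\iint g\,d\mu^*d\mu^*+\int f\,d\mu^*$, which is \eqref{def:Cf}. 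For $C_f\leq 0$: since $v_f\leq 0$ and $J_f(\mu^*)=\iint g\,d\mu^*d\mu^*+2\int f\,d\mu^*=v_f$, combine with $\iint g\,d\mu^*d\mu^*+\int f\,d\mu^*=C_f R$ to get $C_f R = v_f - \int f\,d\mu^*$; an alternative and cleaner route is the scaling argument: $t\mapsto J_f(t\mu^*/\|\mu^*\|\cdot\|\mu^*\|)$—more precisely $\phi(t):=J_f(t\mu^*)$ for $t\in[0,1]$ extended as far as mass permits—is a quadratic with $\phi'(1)/\|\mu^*\|$ proportional to $C_f$, and minimality at or before the endpoint gives $C_f\leq 0$ with equality when the minimum is interior, i.e.\ when $\|\mu^*\|<R$.

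Finally, the \textbf{support localization} $\supp(\mu^*)\subseteq\{z: f(z)\leq C_f-\kappa\|\mu^*\|\}$ follows directly from \eqref{varineq2}: for $z\in\supp(\mu^*)$ we have $f(z)\leq C_f - U^{\mu^*}_G(z)$, and $U^{\mu^*}_G(z)=\int g(z,\zeta)\,d\mu^*(\zeta)\geq \kappa\,\mu^*(K)=\kappa\|\mu^*\|$ by the definition of $\kappa$ in \eqref{def:betakappa}, giving $f(z)\leq C_f-\kappa\|\mu^*\|$ immediately.

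I expect the \textbf{main obstacle} to be the careful handling of the variational inequalities when $\|\mu^*\|$ can be strictly less than $R$: unlike the fixed-mass Frostman problem, here perturbations come in two flavors—redistributing mass and changing total mass—and one must verify that \emph{all} first-order perturbations staying inside $\mathcal{B}_R(K)$ are non-decreasing for $J_f$, which is what simultaneously produces the single constant $C_f$ in both \eqref{varineq1} and \eqref{varineq2} and forces $C_f=0$ in the unsaturated case. The quasi-everywhere (rather than everywhere) nature of \eqref{varineq1}, and the justification that $\mu^*$ has finite Green energy so that it charges no polar set, are the other technical points requiring care; both are standard once one knows $v_f>-\infty$, which was already established before the theorem statement via \eqref{eq:ineqenerg}.
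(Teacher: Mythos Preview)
Your outline is sound and would yield a correct proof, but your route to the variational inequalities differs from the paper's. The paper does \emph{not} run the first-variation/perturbation argument directly in $\mathcal{B}_R(K)$; instead, once $0<\|\mu^*\|\leq R$, it observes that $\mu^*/\|\mu^*\|$ must be the minimizer of $J_Q$ on the space $\mathcal{P}(K)$ of probability measures, with the rescaled external field $Q=f/\|\mu^*\|$, and then simply \emph{cites} the fixed-mass Frostman result (Saff--Totik, Theorem~II.5.10) to obtain \eqref{varineq1}--\eqref{varineq2}. It then computes the resulting constant and checks that it equals $C_f$ in both the saturated ($\|\mu^*\|=R$) and unsaturated ($\|\mu^*\|<R$) cases, the latter using the scaling identity \eqref{eq:id1}. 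Your direct perturbation approach is more self-contained but longer; the paper's reduction is quicker but relies on the off-the-shelf fixed-mass theorem.

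Two small points to tighten in your version. First, in the uniqueness step you invoke ``positive-definiteness of the Green kernel on zero-charge measures,'' but $\mu_1^*-\mu_2^*$ need not have total mass zero (the masses may differ); for the Green kernel this is harmless since positive-definiteness holds for \emph{all} signed measures of finite Green energy (Saff--Totik, Theorem~II.5.6), so just drop the zero-charge qualifier. Second, integrating \eqref{varineq1}--\eqref{varineq2} against $\mu^*$ gives $C_f\,\|\mu^*\|=\iint g\,d\mu^*d\mu^*+\int f\,d\mu^*$, which is \emph{not} \eqref{def:Cf} (that formula has $R$, not $\|\mu^*\|$). You do have the missing ingredient---the scaling argument giving $C_f=0$ and hence $\iint g\,d\mu^*d\mu^*+\int f\,d\mu^*=0$ when $\|\mu^*\|<R$---but you need to state explicitly that this reconciles the two formulas (they agree trivially when $\|\mu^*\|=R$, and when $\|\mu^*\|<R$ both sides vanish). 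Existence, the sign of $C_f$, the case $\mu^*=0$, and the support inclusion are handled the same way in both arguments.
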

\begin{proof}
We justify first the uniqueness of the minimizer. Suppose that $\mu_{1}, \mu_{2}\in \mathcal{B}_{R}(K)$ satisfy $J_{f}(\mu_{1})=J_{f}(\mu_{2})=v_{f}$. Since $v_{f}$ is finite, it follows that $\mu_1$ and $\mu_2$ have finite Green energy. Therefore, for the signed measure $\nu:=\frac{1}{2}(\mu_{1}-\mu_{2})$ we know by Theorem II.5.6 in \cite{SaffTotik} that 
\[
\mathcal{I}:=\iint g(z,\zeta)\,d\nu(z)\,d\nu(\zeta)\geq 0
\]
with equality if and only if $\nu=0$. Now, we have $\frac{1}{2}(\mu_{1}+\mu_{2})\in \mathcal{B}_{R}(K)$, so $J_{f}\left(\frac{1}{2}(\mu_{1}+\mu_{2})\right)\geq v_{f}$. A simple computation shows that
\[
\mathcal{I}+J_{f}\left(\frac{1}{2}(\mu_{1}+\mu_{2})\right)=\frac{1}{2}J_{f}(\mu_{1})+\frac{1}{2}J_{f}(\mu_{2})=v_{f},
\]
hence $\mathcal{I}\leq 0$. Therefore $\mathcal{I}=0$ and $\mu_{1}=\mu_{2}$.

To prove the existence of a minimizer, let $(\nu_{N})$ be a sequence of measures in $\mathcal{B}_{R}(K)$ such that
\[
J_{f}(\nu_{N})<v_{f}+\frac{1}{N},\qquad N\geq 1.
\]
By Helly's selection theorem, there exists a subsequence $(\nu_{N_{k}})$ that converges in the weak-star topology to a measure $\nu\in \mathcal{B}_{R}(K)$. By the lower semicontinuity of $g$ and $f$, we get
\[
J_{f}(\nu)\leq \liminf_{k\rightarrow\infty}J_{f}(\nu_{N_{k}})=v_{f}
\] 
hence $J_{f}(\nu)=v_{f}$.

Assume now that $\|\mu^{*}\|<R$ (this is the case if, for example, $\beta<0$ and $R>-2\beta/\kappa$, see \eqref{eq:reducspace}) and let us show that in this case
\begin{equation}\label{eq:id1}
\iint g(z,\zeta)\,d\mu^{*}(z)\,d\mu^{*}(\zeta)+\int f\,d\mu^{*}=0.
\end{equation}
Let
\[
r:=\frac{R-\|\mu^{*}\|}{R}>0.
\]
If $|\epsilon|<r$, then $(1+\epsilon)\mu^{*}\in \mathcal{B}_{R}(K)$. Therefore we have $J_{f}((1+\epsilon)\mu^{*})\geq J_{f}(\mu^{*})$, which reduces to 
\[
\epsilon^{2}\iint g(z,\zeta)\,d\mu^{*}(z)\,d\mu^{*}(\zeta)+2\epsilon\left(\iint g(z,\zeta)\,d\mu^{*}(z)\,d\mu^{*}(\zeta)+\int f\,d\mu^{*}\right)\geq 0.
\]
This inequality is valid for all $\epsilon\in(-r,r)$, so \eqref{eq:id1} follows. If $\|\mu^{*}\|=R$, then $(1-\epsilon)\mu^{*}\in \mathcal{B}_{R}(K)$ for all $0<\epsilon<1$, and the inequality $J_{f}((1-\epsilon)\mu^{*})\geq J_{f}(\mu^{*})$ implies that $C_{f}\leq 0$.

Assume now that $0<\|\mu^{*}\|\leq R$. Let $\mathcal{P}(K)$ denote the space of all Borel probability measures on $K$ and let
\[
\widehat{v}_{f}:=\{J_{Q}(\mu): \mu\in\mathcal{P}(K)\},
\]
where $Q=f/\|\mu^{*}\|$. It is easy to check that the probability measure $\mu^{*}/\|\mu^{*}\|$ is the minimizer of $J_{Q}$ in the space $\mathcal{P}(K)$, therefore by Theorem II.5.10 in \cite{SaffTotik}, we know that
\begin{align*}
U^{\mu^{*}}_{G}(z)+f(z) & \geq \|\mu^{*}\|\left(\widehat{v}_{f}-\frac{1}{\|\mu^{*}\|^2}\int f\,d\mu^{*}\right)\quad \mbox{q.e. on}\,\,K,\\
U^{\mu^{*}}_{G}(z)+f(z) & \leq \|\mu^{*}\|\left(\widehat{v}_{f}-\frac{1}{\|\mu^{*}\|^2}\int f\,d\mu^{*}\right)\quad \mbox{for all}\,\,z\in\mathrm{supp}(\mu^{*}).
\end{align*}
Since $\widehat{v}_{f}=J_{Q}(\mu^{*}/\|\mu^{*}\|)$, we obtain
\begin{align*}
\widehat{v}_{f}-\frac{1}{\|\mu^{*}\|^2}\int f\,d\mu^{*} & =\frac{1}{\|\mu^{*}\|^2}\iint g(z,\zeta)\,d\mu^*(z)\,d\mu^{*}(\zeta)+\frac{2}{\|\mu^{*}\|^2}\int f\,d\mu^{*}-\frac{1}{\|\mu^{*}\|^2}\int f\,d\mu^{*}\\
& =\frac{1}{\|\mu^{*}\|^{2}}\left(\iint g(z,\zeta)\,d\mu^{*}(z)\,d\mu^{*}(\zeta)+\int f\,d\mu^{*}\right).
\end{align*}
In virtue of \eqref{eq:id1}, this constant is zero if $0<\|\mu^{*}\|<R$, and it equals $C_{f}/R$ if $\|\mu^{*}\|=R$. This finishes the proof of \eqref{varineq1}--\eqref{varineq2} in the case $\|\mu^{*}\|>0$.

Assume that $\mu^{*}=0$. In this case we only need to justify \eqref{varineq1}, which reduces to show that $f\geq 0$ q.e. on $K$. If $\mu$ is any measure in $\mathcal{B}_{R}(K)$ with finite Green energy, then for any $0<\delta<1$, we have
\[
J_{f}(\delta\mu)=\delta^{2}\iint g(z,\zeta)\,d\mu(z)\,d\mu(\zeta)+2\delta \int f\,d\mu\geq v_{f}=0,
\]
which implies that 
\begin{equation}\label{posint}
\int f\,d\mu\geq 0.
\end{equation}
If $\{z\in K: f(z)<0\}$ has positive capacity, then for some positive integer $n$, the set $E_{n}:=\{z\in K: f(z)\leq -1/n\}$ also has positive capacity (see Theorem 5.1.3 in \cite{Rans}). Let $\mu_{E_{n}}$ be the Green equilibrium measure for the compact set $E_{n}$, i.e., the probability measure on $E_{n}$ with minimal Green energy. Then the Green energy of the measure $\mu=\frac{R}{2}\mu_{E_{n}}\in \mathcal{B}_{R}(K)$ is finite and we have $\int f\,d\mu<0$, which contradicts \eqref{posint}. This concludes the proof of \eqref{varineq1}--\eqref{varineq2}. 

The fact that $\supp(\mu^{*})\subset\{z\in K: f(z)\leq C_{f}-\kappa\,\|\mu^{*}\|\}$ follows immediately from \eqref{varineq2}.
\end{proof}

In the following result we gather some basic properties of the minimizers of the energy functionals $J_{f}$ and $E_{N,f}$.

\begin{theorem}\label{theo:basicprop}
Let $R>0$, and for an admissible external field $f$, let $\mu^{*}\in \mathcal{B}_{R}(K)$ be the minimizer of the energy functional $J_{f}$ in the ball $\mathcal{B}_{R}(K)$, and let $\beta=\min_{z\in K}f(z)$. The following properties hold:
\begin{itemize}
\item[$i)$] $\mu^{*}=0$ is the zero measure if and only if $f\geq 0$ q.e. on $K$.
\item[$ii)$] If $\|\mu^{*}\|<R$, then $v_{f}=\int f\,d\mu^{*}=-\iint g(z,\zeta)\,d\mu^{*}(z)\,d\mu^{*}(\zeta)$.
\item[$iii)$] If $\|\mu^{*}\|>0$, then $v_{f}<0$.
\item[$iv)$] If $f\leq 0$ on $K$, then we have $e_{N+1}\leq e_{N}$ for all $N\geq 2$.
\item[$v)$] Let $(z_{1,N},\ldots,z_{N,N},m_{1,N},\ldots,m_{N,N})\in K^{N}\times I^{N}$ be a minimizer of $E_{N,f}$ for some $N\geq 2$. If $m_{i,N}>0$ and $m_{j,N}>0$ for a pair of indices $i\neq j$, then $z_{i,N}\neq z_{j,N}$. 
\item[$vi)$] If $\beta<0$ and $e_{N+1}<e_{N}+2RN\beta$, then for any minimizer $(z_{1,N+1},\ldots,m_{N+1,N+1})$ of $E_{N+1,f}$ we must have $m_{j,N+1}>0$ for all $1\leq j\leq N+1$.
\item[$vii)$] If $(z_{1,N},\ldots,z_{N,N}, m_{1,N},\ldots,m_{N,N})$ is a minimizer of $E_{N,f}$ and $0<m_{i,N}<R$ for some $1\leq i\leq N$, then $f(z_{i,N})$ is finite and we have
\[
\sum_{\substack{j=1\\ j\neq i}}^{N} m_{j,N}\,g(z_{i,N},z_{j,N})=-N f(z_{i,N}).
\]
\end{itemize}
\end{theorem}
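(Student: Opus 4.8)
\textbf{Proof proposal for Theorem~\ref{theo:basicprop}.}

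The plan is to treat the seven items in three natural groups, reusing Theorem~\ref{theo:fund} heavily for the continuous parts and exploiting the differentiability of $E_{N,f}$ in the mass variables for the discrete parts.

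For the continuous statements $i)$--$iii)$: Item $i)$ is essentially a restatement of the $\mu^*=0$ case already analyzed in the proof of Theorem~\ref{theo:fund}; one direction is the trivial observation that $f\geq 0$ q.e.\ forces $J_f(\mu)\geq 0$ for every $\mu\in\mathcal{B}_R(K)$ with finite energy (hence $\mu^*=0$), and the converse is exactly the argument with the sets $E_n$ and their Green equilibrium measures given there. For $ii)$, I would combine \eqref{eq:id1} (which gives $\iint g\,d\mu^*d\mu^* = -\int f\,d\mu^*$ when $\|\mu^*\|<R$) with the definition \eqref{def:energfunc}: substituting yields $v_f = J_f(\mu^*) = \iint g\,d\mu^*d\mu^* + 2\int f\,d\mu^* = \int f\,d\mu^*$, and the other equality follows by substituting the other way. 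For $iii)$: if $\|\mu^*\|>0$ then by $i)$ the set $\{f<0\}$ has positive capacity; scaling $\delta\mu^*$ and using that $\mu^*$ is the minimizer shows $\int f\,d\mu^*\leq 0$, and if this integral were $0$ one checks via the variational inequality \eqref{varineq2} that $U^{\mu^*}_G \leq C_f \leq 0$ on $\supp(\mu^*)$ forces $v_f=0$ with $\mu^*$ achieving it — but then $J_f((1+\epsilon)\mu^*)$ analysis, or more simply positivity of the Green energy term, contradicts $\|\mu^*\|>0$. A cleaner route for $iii)$: from \eqref{def:Cf} and \eqref{eq:reducspace}-type reasoning, $v_f = C_f\|\mu^*\| + \iint g\,d\mu^*d\mu^* \cdot(\text{something})$; I expect the shortest argument is to note $J_f(\mu^*)<0$ because some small multiple of a Green equilibrium measure on $\{f\leq -1/n\}$ already has strictly negative energy, so $v_f<0$, and then separately $\|\mu^*\|>0$ follows — but the statement wants the reverse implication, so I would argue: $\|\mu^*\|>0$, pick $t\in(0,1)$, $J_f(t\mu^*)=t^2 Q + 2t L$ where $Q=\iint g\,d\mu^*d\mu^*\geq \kappa\|\mu^*\|^2>0$ and $L=\int f\,d\mu^*$; minimality at $t=1$ gives $Q+L\leq 0$ hence $L\leq -Q<0$, so $v_f = Q+2L = (Q+L)+L \leq L <0$.

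For the discrete monotonicity $iv)$: given an optimal configuration for $N$, I would build a competitor for $N+1$ by adjoining a particle of mass $0$ (or redistributing), but the factor $N$ versus $N+1$ in front of the external-field term is the subtlety. Since $f\leq 0$, the linear term is $\leq 0$; taking the $N$-optimal points $z_{j,N}$ with masses $m_{j,N}$, and adding $z_{N+1}$ arbitrary with $m_{N+1}=0$, the interaction sum is unchanged but the external field term changes from $2N\sum m_{j,N}f(z_{j,N})$ to $2(N+1)\sum m_{j,N}f(z_{j,N})$, which is $\leq$ the former precisely because $\sum m_{j,N}f(z_{j,N})\leq 0$. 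Hence $e_{N+1}\leq E_{N+1,f}(\ldots)\leq e_{N}$. I expect this is the cleanest item.

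For $v)$--$vii)$, the uniform tool is: fix a minimizing configuration and vary one mass $m_i\in[0,R]$ while freezing everything else; $E_{N,f}$ is then an affine function of $m_i$ (the quadratic self-term vanishes since $g(z_i,z_i)$ does not appear), namely $m_i\mapsto 2m_i\big(\sum_{j\neq i}m_{j}g(z_i,z_j)+Nf(z_i)\big) + (\text{const})$. For $vii)$: if $0<m_{i,N}<R$, the minimum of an affine function over $[0,R]$ is attained at an interior point only if the slope is zero, giving exactly $\sum_{j\neq i}m_{j,N}g(z_{i,N},z_{j,N})+Nf(z_{i,N})=0$; this also forces $f(z_{i,N})$ finite (else the value would be $+\infty$, contradicting optimality since $m_{i,N}=0$ gives a finite competitor). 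For $v)$: if $z_{i,N}=z_{j,N}=:z$ with both masses positive, then $g(z_i,z_j)=g(z,z)\geq\kappa>0$ appears in the energy with coefficient $2m_{i,N}m_{j,N}>0$; merging the two particles into one at $z$ with combined mass — but masses are capped at $R$, so instead I would note that decreasing $m_{i,N}$ slightly by $\eta$ and increasing $m_{j,N}$ by $\eta$ keeps feasibility if $m_{j,N}<R$ and changes the energy's quadratic part; the net effect on $2m_im_jg(z,z) + 2m_iSum_i + 2m_jSum_j + 2Nf(z)(m_i+m_j)$ where $Sum_i=\sum_{k\neq i,j}m_kg(z,z_k)$... this is getting into the calculation, so the key point is that the Hessian in the direction $(m_i,m_j)\mapsto(m_i-\eta,m_j+\eta)$ has second derivative $-2g(z,z)<0$ (concave), so one of the two endpoints of the feasible segment strictly lowers the energy, contradicting optimality — unless the segment is degenerate, which would need both at an endpoint, but they're strictly interior in at least the direction that helps. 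For $vi)$: suppose some $m_{j,N+1}=0$; delete that particle to get an $N$-configuration, whose energy differs from $e_{N+1}$ only by dropping the (zero) interaction terms and changing $2(N+1)\sum m_k f(z_k)$ to $2N\sum m_k f(z_k)$, a change of $-2\sum_{k\neq j}m_{k,N+1}f(z_{k,N+1})$; bounding $\sum m_k f(z_k)\geq \sum m_k \beta \geq RN\beta$ crudely... wait, there are $N$ surviving particles each with mass $\leq R$, so $\sum m_k f(z_k)\geq RN\beta$ is too weak; rather $\sum_k m_{k} \leq$ (number of particles)$\cdot R$, and I would get $e_N \leq e_{N+1} - 2N\sum_{k\neq j}m_k f(z_k) \leq e_{N+1} - 2N\cdot RN\beta$... the inequality in the hypothesis $e_{N+1}<e_N+2RN\beta$ with $\beta<0$ is a smallness condition ensuring $e_{N+1}$ is strictly below what an $N$-configuration padded with a zero mass could give, forcing all masses positive. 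I would make the constants match carefully here.

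\textbf{Main obstacle.} The genuinely delicate items are $v)$ and $vi)$. For $v)$ the difficulty is that one cannot simply "merge" colliding particles because of the upper cap $m\leq R$; the fix is a two-variable concavity (exchange) argument, and I expect bookkeeping the feasible segment and its endpoints to be the fiddly part. For $vi)$ the obstacle is getting the precise constant $2RN\beta$ to come out of the comparison between an optimal $(N+1)$-configuration with a vanishing mass and the constant $e_N$ — one must bound $\sum m_k f(z_k)$ from below using only $m_k\leq R$, $f\geq\beta$, and exactly $N$ surviving particles, and check this matches the stated hypothesis. Items $i)$--$iv)$ and $vii)$ I expect to be short.
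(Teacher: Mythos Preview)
Your approaches for $i)$, $ii)$, $iv)$, and $vii)$ match the paper's proof essentially line for line. The remaining three items deserve comment.

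For $iii)$, your scaling computation works, but the paper's argument is a one-liner you overlooked: if $\|\mu^*\|>0$ and $v_f=0$, then $J_f(0)=0=v_f=J_f(\mu^*)$, so both the zero measure and $\mu^*$ minimize $J_f$ on $\mathcal{B}_R(K)$, contradicting the uniqueness in Theorem~\ref{theo:fund}. No inequalities are needed.

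For $v)$, you have missed the key observation that makes this trivial: the Green function satisfies $g(z,z)=+\infty$ (it has a logarithmic pole on the diagonal, cf.~\eqref{repGreenfunc} and the convention $0\cdot\infty=0$ stated right after the definition of $E_{N,f}$). Hence if $z_{i,N}=z_{j,N}$ with $m_{i,N},m_{j,N}>0$, the term $m_{i,N}m_{j,N}\,g(z_{i,N},z_{j,N})=+\infty$ forces $e_N=+\infty$, contradicting $e_N\leq 0$. Your concavity/exchange argument is unnecessary, and as written it does not cleanly treat the boundary case $m_{i,N}=m_{j,N}=R$ (where the move $(m_i-\eta,m_j+\eta)$ is infeasible in either direction you propose).

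For $vi)$, your first bound is exactly right and you should not have second-guessed it. With $m_{j_0,N+1}=0$ (say $j_0=N+1$), deleting that particle gives an $N$-configuration with
\[
e_N\leq e_{N+1}-2\sum_{k=1}^{N}m_{k,N+1}\,f(z_{k,N+1}),
\]
and since $f\geq\beta$, $m_k\leq R$, $\beta<0$, one has $\sum_{k=1}^{N}m_{k,N+1}f(z_{k,N+1})\geq NR\beta$, giving $e_{N+1}\geq e_N+2RN\beta$ --- the precise contradiction to the hypothesis. Your later line ``$e_N\leq e_{N+1}-2N\sum_{k\neq j}m_kf(z_k)$'' inserts a spurious factor of $N$ and should be discarded.
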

\begin{proof}
If $\mu^{*}=0$, then by \eqref{varineq1} we have $f\geq C_{f}=0$ q.e. on $K$. Conversely, suppose that $f\geq 0$ q.e. on $K$. Since $\mu^{*}$ has finite logarithmic energy, by Theorem 3.2.3 in \cite{Rans} we deduce that $f\geq 0$ $\mu^{*}$-a.e., and so $v_{f}\geq 0$. Therefore $v_{f}=0$ and $\mu^{*}=0$, so $i)$ is justified. 

If $\|\mu^{*}\|<R$, then by Theorem~\ref{theo:fund} we have $C_{f}=0$, hence $v_{f}=J_{f}(\mu^{*})=\int f\,d\mu^{*}=-\iint g(z,\zeta)\,d\mu^{*}(z)\,d\mu^{*}(\zeta)$.

If $\|\mu^{*}\|>0$ and $v_{f}=0$, then the zero measure and $\mu^{*}$ are two different minimizers of $J_{f}$, which is impossible. Therefore $\|\mu^{*}\|>0$ implies $v_{f}<0$.

Now we justify $iv)$, so assume that $f\leq 0$ on $K$. Let $(z_{1,N},\ldots,z_{N,N},m_{1,N},\ldots,m_{N,N})\in K^{N}\times I^{N}$ be a minimizer of $E_{N,f}$, let $\widetilde{z}$ be an arbitrary point in $K$, and let $\widetilde{m}=0$. Then
\begin{gather*}
e_{N}=2\sum_{1\leq i<j\leq N} m_{i,N}\,m_{j,N}\,g(z_{i,N},z_{j,N})+2(N+1)\sum_{j=1}^{N} m_{j,N}f(z_{j,N})-2\sum_{j=1}^{N}m_{j,N} f(z_{j,N})\\
\geq 2\sum_{1\leq i<j\leq N} m_{i,N}\,m_{j,N}\,g(z_{i,N},z_{j,N})+2\sum_{i=1}^{N}m_{i,N}\,\widetilde{m}\,g(z_{i,N},\widetilde{z})\\+2(N+1)\left(\sum_{j=1}^{N} m_{j,N}f(z_{j,N})+\widetilde{m} f(\widetilde{z})\right)\geq e_{N+1}
\end{gather*}
where the last inequality follows by definition of $e_{N+1}$.

If $(z_{1,N},\ldots,z_{N,N}, m_{1,N},\ldots,m_{N,N})$ is a minimizer of $E_{N,f}$ and $m_{i,N}, m_{j,N}>0$ for indices $i\neq j$, then $z_{i,N}\neq z_{j,N}$, otherwise $m_{i,N}\,m_{j,N}\,g(z_{i,N},z_{j,N})=m_{i,N}\,m_{j,N}\,g(z_{i,N},z_{i,N})=\infty$, which is impossible since $e_{N}$ is finite.

Now we justify $vi)$. Suppose that $\beta<0$ and $e_{N+1}<e_{N}+2RN\beta$, and take an arbitrary minimizer $(z_{1,N+1},\ldots,z_{N+1,N+1},m_{1,N+1},\ldots,m_{N+1,N+1})$ of $E_{N+1,f}$. Suppose that $m_{j_{0},N+1}=0$ for some $1\leq j_{0}\leq N+1$, and without loss of generality assume that $j_{0}=N+1$. Then
\begin{gather*}
e_{N+1}=2\sum_{1\leq i<j\leq N+1} m_{i,N+1}\,m_{j,N+1}\,g(z_{i,N+1},z_{j,N+1})+2(N+1)\sum_{j=1}^{N+1} m_{j,N+1}f(z_{j,N+1})\\
=2\sum_{1\leq i<j\leq N} m_{i,N+1}\,m_{j,N+1}\,g(z_{i,N+1},z_{j,N+1})+2N\sum_{j=1}^{N} m_{j,N+1}f(z_{j,N+1})+2\sum_{j=1}^{N}m_{j,N+1} f(z_{j,N+1})\\
\geq e_{N}+2\sum_{j=1}^{N}m_{j,N+1} f(z_{j,N+1})\geq e_{N}+2RN\beta,
\end{gather*}
hence $e_{N+1}\geq e_{N}+2RN\beta$, which is a contradiction. 

Finally, we justify $vii)$, so assume that $(z_{1,N},\ldots,m_{N,N})$ is a minimizer of $E_{N,f}$ and $0<m_{i,N}<R$ for some $1\leq i\leq N$. If $f(z_{i,N})=+\infty$, then $e_{N}=+\infty$, which is impossible, so $f(z_{i,N})$ is finite. Consider the function
\begin{align*}
h(m) & :=E_{N,f}(z_{1,N},\ldots,z_{N,N},m_{1,N},\ldots,m_{i-1,N},m,m_{i+1,N},\ldots,m_{N,N})\\
& =2m\sum_{\substack{j=1\\ j\neq i}}^{N}m_{j,N}\,g(z_{i,N},z_{j,N})+2Nm f(z_{i,N})+C
\end{align*}
where $C$ is a constant, and observe that $m_{j,N}\,g(z_{i,N},z_{j,N})$ is finite for all $j\neq i$. This function $h$ defined on the interval $[0,R]$ is differentiable and has a minimum at the point $m_{i,N}\in(0,R)$, hence
\[
0=h'(m_{i,N})=2\sum_{\substack{j=1\\ j\neq i}}^{N}m_{j,N}\,g(z_{i,N},z_{j,N})+2N f(z_{i,N})
\]
so $vii)$ is justified.\end{proof}

\begin{remark}\label{rem:lineareqns}
It follows from Theorem \ref{theo:basicprop}, part $vii)$, that if $(z_{1,N},\ldots,z_{N,N},m_{1,N},\ldots,m_{N,N})$ is a minimizer of $E_{N,f}$ and $0<m_{i,N}<R$ for all $1\leq i\leq N$, then the column vector $\mathbf{v}:=(m_{1,N}\,\,\,m_{2,N}\,\,\,\ldots\,\,m_{N,N})^{t}$ satisfies the relation
\[
\widetilde{G}_{N}\,\mathbf{v}=\widetilde{F}_{N}
\] 
where $\widetilde{G}_{N}$ is the $N\times N$ symmetric matrix with entries 
\[
\widetilde{G}_{N}(i,j)=\begin{cases}
g(z_{i,N},z_{j,N}), & i\neq j,\\
0, & i=j,
\end{cases}
\]
and $\widetilde{F}_{N}$ is the column vector $\widetilde{F}_{N}=-N (f(z_{1,N})\,\,\,f(z_{2,N})\,\,\,\ldots\,\,\,f(z_{N,N}))^{t}$. So if $\widetilde{G}_{N}$ is invertible, the masses of the particles are uniquely determined by the positions of the particles and we have $\textbf{v}=\widetilde{G}_{N}^{-1} \widetilde{F}_{N}$.
\end{remark}

As functions of $R$, the optimal value \eqref{def:vf} and the minimizer of the energy functional \eqref{def:energfunc} are continuous, as the following result shows.

\begin{theorem}\label{theo:dynamic}
Let $\mu_{R}^{*}$ denote the minimizer of the energy functional \eqref{def:energfunc} in the ball $\mathcal{B}_{R}(K)$, $R\geq 0$, and let $v_{f}(R)=J_{f}(\mu_{R}^{*})$. If $(R_{N})_{N}$ is a convergent sequence of positive numbers and $R=\lim_{N\rightarrow\infty} R_{N}$, then $\mu_{R_{N}}^{*}$ converges in the weak-star topology to $\mu_{R}^{*}$ and $v_{f}(R)=\lim_{N\rightarrow\infty} v_{f}(R_{N})$.
\end{theorem}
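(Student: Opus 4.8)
The plan is to prove continuity of the scalar map $R\mapsto v_f(R)$ first, and then upgrade this to weak-star convergence of the minimizers via a compactness-and-uniqueness argument. We may assume $\beta:=\min_K f<0$, since otherwise $v_f\equiv 0$ and $\mu_R^*\equiv 0$ and there is nothing to prove; and the bounds $2\beta R'\le v_f(R')\le 0$ together with $\|\mu^*_{R'}\|\le R'$ settle the case $R=0$ (forcing $v_f(R_N)\to 0$ and $\mu^*_{R_N}\rightharpoonup 0$), so we may also assume $R>0$.

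\emph{Step 1: continuity of $v_f$.} First I would note that $v_f$ is non-increasing, because $\mathcal{B}_{R_1}(K)\subseteq\mathcal{B}_{R_2}(K)$ when $R_1\le R_2$. Let $R_N\to R$. For the bound $\limsup_N v_f(R_N)\le v_f(R)$ I would use the rescaled minimizer as a competitor: set $\phi(t):=J_f(t\mu_R^*)=t^2\iint g\,d\mu_R^*\,d\mu_R^*+2t\int f\,d\mu_R^*$, a (continuous) quadratic polynomial in $t$ with $\phi(1)=v_f(R)$, the coefficients being finite because $v_f(R)$ is finite. Since $\min(1,R_N/R)\,\mu_R^*\in\mathcal{B}_{R_N}(K)$, we get $v_f(R_N)\le\phi(\min(1,R_N/R))\to\phi(1)=v_f(R)$. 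For the reverse bound $\liminf_N v_f(R_N)\ge v_f(R)$, I would argue by contradiction: passing to a subsequence with $v_f(R_{N_k})\to L<v_f(R)$, I note that all $\mu^*_{R_N}$ lie in a common ball $\mathcal{B}_{R'}(K)$ (as $(R_N)$ is bounded), so by Helly's selection theorem there is a further subsequence with $\mu^*_{R_{N_{k_j}}}\rightharpoonup\nu$ in the weak-star topology; since $K$ is compact the total mass passes to the limit, so $\|\nu\|=\lim_j\|\mu^*_{R_{N_{k_j}}}\|\le\lim_j R_{N_{k_j}}=R$, that is, $\nu\in\mathcal{B}_R(K)$. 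Then the weak-star lower semicontinuity of $J_f$ (the lower semicontinuity of $g$ and $f$, exactly as in the existence part of the proof of Theorem~\ref{theo:fund}) gives $J_f(\nu)\le\liminf_j J_f(\mu^*_{R_{N_{k_j}}})=L<v_f(R)$, contradicting $\nu\in\mathcal{B}_R(K)$. Hence $v_f(R_N)\to v_f(R)$.

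\emph{Step 2: weak-star convergence of the minimizers.} Given $R_N\to R$, I would take an arbitrary subsequence of $(\mu^*_{R_N})$, extract by Helly a sub-subsequence $\mu^*_{R_{N_{k_j}}}\rightharpoonup\nu\in\mathcal{B}_R(K)$ as in Step 1, and then combine weak-star lower semicontinuity of $J_f$ with the continuity of $v_f$ just proved: $J_f(\nu)\le\liminf_j J_f(\mu^*_{R_{N_{k_j}}})=\liminf_j v_f(R_{N_{k_j}})=v_f(R)$. Thus $\nu$ minimizes $J_f$ on $\mathcal{B}_R(K)$, so $\nu=\mu_R^*$ by the uniqueness assertion in Theorem~\ref{theo:fund}. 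Since every subsequence of $(\mu^*_{R_N})$ has a sub-subsequence converging weak-star to the same limit $\mu_R^*$, the whole sequence converges weak-star to $\mu_R^*$, which completes the proof.

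\emph{Main obstacle.} The one place that requires an actual construction rather than soft analysis is the upper bound $\limsup_N v_f(R_N)\le v_f(R)$ for $R_N$ approaching $R$ from below, where $\mu_R^*$ is itself inadmissible for $\mathcal{B}_{R_N}(K)$ and must be replaced by the rescaling $(R_N/R)\mu_R^*$; continuity of $t\mapsto\phi(t)$ then does the rest. I would also be careful that the limiting measure satisfies $\|\nu\|\le R$ (not merely $\le$ some larger constant), which is exactly what keeps it inside $\mathcal{B}_R(K)$ and makes the uniqueness step applicable; this uses compactness of $K$, so that the constant function $1$ is an admissible test function and total mass passes to the weak-star limit.
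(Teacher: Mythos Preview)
Your proof is correct and follows essentially the same approach as the paper's: both use the rescaled competitor $(R_{N}/R)\,\mu_{R}^{*}$ to bound $v_f(R_N)$ from above, weak-star lower semicontinuity of $J_f$ together with Helly's theorem for the lower bound, and then the uniqueness of the minimizer to identify the limit. The only difference is organizational---you first establish continuity of the scalar map $R\mapsto v_f(R)$ and then feed it into the compactness-and-uniqueness step, whereas the paper wraps both conclusions into a single sandwich inequality---and you also explicitly dispose of the edge cases $\beta\ge 0$ and $R=0$, which the paper leaves implicit.
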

\begin{proof}
By the compactness of the balls $\mathcal{B}_{r}(K)$ in the weak-star topology, the convergence of $\mu_{R_{N}}^{*}$ to $\mu_{R}^{*}$ will be justified if we show that any weak-star convergent subsequence of $(\mu^{*}_{R_{N}})$ converges to $\mu_{R}^{*}$. Let $(\mu^{*}_{R_{N_{k}}})$ be such a subsequence with limit $\tau$. We have $\|\tau\|=\lim_{k\rightarrow\infty}\|\mu_{R_{N_{k}}}^{*}\|\leq \lim_{k\rightarrow\infty} R_{N_{k}}=R$, so $\tau\in \mathcal{B}_{R}(K)$ and therefore $J_{f}(\mu_{R}^{*})\leq J_{f}(\tau)$. The energy functional $J_{f}$ is lower semicontinuous in the weak-star topology, so $J_{f}(\tau)\leq \liminf_{k\rightarrow\infty}J_{f}(\mu_{R_{N_{k}}}^{*})$. Let $\widehat{\mu}_k:=(R_{N_{k}}/R)\,\mu_{R}^{*}$. Then $\|\widehat{\mu}_{k}\|\leq R_{N_{k}}$, hence $J_{f}(\mu_{R_{N_{k}}}^{*})\leq J_{f}(\widehat{\mu}_{k})$. It is obvious that $\lim_{k\rightarrow\infty}J_{f}(\widehat{\mu}_{k})=J_{f}(\mu_{R}^{*})$, so we conclude that
\[
J_{f}(\mu_{R}^{*})\leq J_{f}(\tau)\leq \liminf_{k\rightarrow\infty}J_{f}(\mu_{R_{N_{k}}}^{*})\leq \limsup_{k\rightarrow\infty}J_{f}(\mu_{R_{N_{k}}}^{*})\leq J_{f}(\mu^{*}_{R}),
\]
therefore $J_{f}(\tau)=J_{f}(\mu^{*}_{R})$ and $\tau=\mu_{R}^{*}$. Taking as subsequence the sequence $(R_{N})$ itself, we have shown that $v_{f}(R_{N})=J_{f}(\mu_{R_{N}}^{*})$ converges to $v_{f}(R)=J_{f}(\mu_{R}^{*})$, so the second claim is proved. 
\end{proof}

\noindent\textbf{Proof of Theorem~\ref{theo:main:1}:} First we show that for any $N\geq 2$ and $\mu\in \mathcal{B}_{R}(K)$, we have
\begin{equation}\label{ineqeJ}
\frac{e_{N}}{N^2}\leq J_{f}(\mu).
\end{equation}
Since $e_{N}\leq 0$, this inequality trivially holds if $\mu=0$ is the zero measure. Suppose that $\mu$ has positive mass, $\|\mu\|>0$. By definition,
\[
e_{N}\leq 2\sum_{1\leq i<j\leq N} m_{i}\,m_{j}\,g(z_{i},z_{j})+2N\sum_{j=1}^{N} m_{j}\,f(z_{j})
\]
for all $(z_{1},\ldots,z_{N},m_1,\ldots,m_{N})\in K^{N}\times I^{N}$. We take in this inequality $m_{j}=\|\mu\|$ for all $1\leq j\leq N$, and we get
\begin{equation}\label{ineq:eN1}
e_{N}\leq 2 \|\mu\|^2\sum_{1\leq i<j\leq N} g(z_{i},z_{j})+2N\|\mu\|\sum_{j=1}^{N}f(z_{j})
\end{equation}
for all $(z_{1},\ldots,z_{N})\in K^{N}$. Let $\sigma=\mu/\|\mu\|$. Integrating both sides of \eqref{ineq:eN1} with respect to the probability measure $d\sigma(z_{1})\cdots d\sigma(z_{N})$, we get
\begin{align*}
e_{N} & \leq 2\|\mu\|^2\frac{N(N-1)}{2}\iint g(z,\zeta)\,d\sigma(z)\,d\sigma(\zeta)+2\|\mu\|N^2 \int f\,d\sigma \\
& =N(N-1)\iint g(z,\zeta)\,d\mu(z)\,d\mu(\zeta)+2N^2\int f\,d\mu
\end{align*}
and \eqref{ineqeJ} follows. 

From \eqref{ineqeJ} and \eqref{def:vf} we deduce that
\begin{equation}\label{ineq:eNvf}
\frac{e_{N}}{N^2}\leq v_{f}\qquad \mbox{for all}\,\,N\geq 2.
\end{equation}
Since $(e_{N}/N^2)_{N=2}^{\infty}$ is a bounded sequence, in order to prove \eqref{eq:limeNvf} it suffices to show that any convergent subsequence of $(e_{N}/N^2)_{N=2}^{\infty}$ converges to $v_{f}$. So let $(e_{N_{k}}/N_{k}^2)_{k=1}^{\infty}$ be a convergent subsequence. It follows from \eqref{ineq:eNvf} that
\begin{equation}\label{limsubub}
\lim_{k\rightarrow\infty}\frac{e_{N_{k}}}{N_{k}^2}\leq v_{f}.
\end{equation}
Consider the sequence of measures $(\mu_{N_{k}})_{k=1}^{\infty}$, where $\mu_{N}$ is given by \eqref{def:muN}. By Helly's selection theorem, there is a subsequence of $(\mu_{N_{k}})_{k=1}^{\infty}$ that converges in the weak-star topology to a measure in $\mathcal{B}_{R}(K)$. We take a subsequence of $(\mu_{N_{k}})_{k=1}^{\infty}$ with this property, which for simplicity of notation will be denoted again as $(\mu_{N_{k}})_{k=1}^{\infty}$. Let $\tau$ be the limit measure of $(\mu_{N_{k}})_{k=1}^{\infty}$. 

By the lower semicontinuity of $f$, we have
\begin{equation}\label{eq:liminf1}
\int f\,d\tau\leq \liminf_{k\rightarrow\infty} \int f\,d\mu_{N_{k}}.
\end{equation}
For a constant $M>0$, we define the truncated Green kernel
\begin{equation}\label{deftruncGK}
g_{M}(z,\zeta):=\begin{cases}
g(z,\zeta) & \mbox{if}\,\,g(z,\zeta)\leq M\\
M & \mbox{if}\,\,g(z,\zeta)>M.
\end{cases}
\end{equation}
Then $g_{M}(z,\zeta)$ is continuous on $K\times K$, hence
\begin{equation}\label{limgM}
\lim_{k\rightarrow\infty}\iint g_{M}(z,\zeta)\,d\mu_{N_{k}}(z)\,d\mu_{N_{k}}(\zeta)=\iint g_{M}(z,\zeta)\,d\tau(z)\,d\tau(\zeta).
\end{equation}
So for each fixed $M>0$, we deduce from \eqref{eq:liminf1} and \eqref{limgM} that 
\begin{equation}\label{eq:liminf2}
\iint g_{M}(z,\zeta)\,d\tau(z)\,d\tau(\zeta)+2\int f\,d\tau\leq \liminf_{k\rightarrow\infty}\left(\iint g_{M}(z,\zeta)\,d\mu_{N_{k}}(z)\,d\mu_{N_{k}}(\zeta)+2\int f\,d\mu_{N_{k}}\right).
\end{equation}
In what follows we will abbreviate and write $z_{j}=z_{j,N_{k}}$, $m_{j}=m_{j,N_{k}}$. We have
\begin{align}
\iint g_{M}(z,\zeta)\,d\mu_{N_{k}}(z)\,d\mu_{N_{k}}(\zeta) & =\sum_{1\leq i,j\leq N_{k}}\frac{m_{i}\,m_{j}}{N_{k}^2}\,g_{M}(z_{i},z_{j})\notag\\
& =\sum_{j=1}^{N_{k}}\frac{m_{j}^{2}}{N_{k}^2}\,g_{M}(z_{j},z_{j})+\sum_{1\leq i\neq j\leq N_{k}} \frac{m_{i}\,m_{j}}{N_{k}^2}\,g_{M}(z_{i},z_{j})\notag\\
& =M \sum_{j=1}^{N_{k}} \frac{m_{j}^{2}}{N_{k}^2}+\sum_{1\leq i\neq j\leq N_{k}} \frac{m_{i}\,m_{j}}{N_{k}^2}\,g_{M}(z_{i},z_{j})\notag\\
& \leq \frac{M R^2}{N_{k}}+\sum_{1\leq i\neq j\leq N_{k}} \frac{m_{i}\,m_{j}}{N_{k}^2}\,g(z_{i},z_{j})\label{estimate}
\end{align}
where in the last inequality we used that $g_{M}\leq g$ and $m_{j}\leq R$ for all $1\leq j\leq N_{k}$.

So by \eqref{eq:liminf2} and \eqref{estimate} we get
\begin{align}
\iint g_{M}(z,\zeta) d\tau(z) d\tau(\zeta)+2\int f d\tau & \leq \liminf_{k\rightarrow\infty}\Big(\sum_{i\neq j} \frac{m_{i}\,m_{j}}{N_{k}^2}g(z_{i},z_{j})+\frac{2}{N_{k}}\sum_{j=1}^{N_{k}}m_{j}\, f(z_{j})+\mathcal{O}(N_{k}^{-1})\Big)\notag\\
& =\lim_{k\rightarrow\infty} \frac{e_{N_{k}}}{N_{k}^2}.\label{halfestimate}
\end{align}
This inequality is valid for every $M>0$, so letting $M\rightarrow\infty$, by the Monotone Convergence Theorem we obtain
\[
v_{f}\leq J_{f}(\tau)=\lim_{M\rightarrow\infty}\left(\iint g_{M}(z,\zeta)\, d\tau(z)\, d\tau(\zeta)+2\int f\, d\tau\right)\leq \lim_{k\rightarrow\infty} \frac{e_{N_{k}}}{N_{k}^2}
\]
which together with \eqref{limsubub} implies 
\begin{equation}\label{finidentity}
\lim_{k\rightarrow\infty} \frac{e_{N_{k}}}{N_{k}^2}=v_{f}=J_{f}(\tau).
\end{equation}
This concludes the proof of \eqref{eq:limeNvf}. We also deduce from \eqref{finidentity} that $\tau$ is a minimizer of the energy functional $J_{f}$, hence by the uniqueness of such measure we obtain $\tau=\mu^*$.

With \eqref{eq:limeNvf} justified, it is easy to see that the above argument also shows the following. If $(\mu_{N_{k}})$ is any subsequence of $(\mu_{N})$ that converges in the weak-star topology to a measure $\tau\in \mathcal{B}_{R}(K)$, then $\tau=\mu^*$. Therefore, the sequence $(\mu_{N})$ converges to $\mu^*$ in the weak-star topology.\qed

\section{Greedy approximation}\label{sec:greedy}

In this section we describe a greedy algorithm to obtain asymptotically the equilibrium constant $v_{f}$ and the equilibrium measure $\mu^{*}$ minimizing the energy functional $J_{f}$ on $\mathcal{B}_{R}(K)$. 

\begin{definition}
Assume that $\|\mu^{*}\|>0$, and let
\[
A:=\{z\in K:\,U^{\mu^{*}}_{G}(z)+f(z)\leq C_{f}\},
\]
where $C_{f}$ is defined in \eqref{def:Cf}. By \eqref{varineq2}, the set $A$ is a non-empty compact subset of $K$ that contains the support of $\mu^{*}$. We say that a sequence $((a_{N},m_{N}))_{N=1}^{\infty}$ in $A\times I$ is a greedy $f$-energy sequence if for every $N\geq 2$,
\begin{equation}\label{minpropUNf}
U_{N,f}(a_{N},m_{N})=\inf_{(z,m)\in A\times I}U_{N,f}(z,m),
\end{equation}
where
\[
U_{N,f}(z,m):=m\sum_{k=1}^{N-1}(m_{k}\,g(z,a_{k})+f(z)),\qquad N\geq 2.
\]
\end{definition}

Since $U_{N,f}$ is lower semicontinuous, it is clear that a greedy $f$-energy sequence can be constructed recursively starting from an arbitrary $(a_{1},m_{1})\in A\times I$. 

\begin{theorem}\label{theo:greedy}
Assume that the equilibrium measure $\mu^{*}$ satisfies $0<\|\mu^{*}\|<R$. If $((a_{N},m_{N}))_{N=1}^{\infty}$ is a greedy $f$-energy sequence in $A\times I$, then
\begin{equation}\label{asympenergygreedy}
\lim_{N\rightarrow\infty}\frac{E_{N,f}(a_{1},\ldots,a_{N},m_{1},\ldots,m_{N})}{N^2}=v_{f}=\int f\,d\mu^{*},
\end{equation}
and the sequence of discrete measures $\frac{1}{N}\sum_{k=1}^{N}m_{k}\,\delta_{a_{k}}$ converges in the weak-star topology to $\mu^{*}$. We also have
\begin{equation}\label{asympfs}
\lim_{N\rightarrow\infty}\frac{1}{N}\sum_{k=1}^{N}m_{k}\,f(a_{k})=\int f\,d\mu^{*}
\end{equation}
and
\begin{equation}\label{asympgas}
\lim_{N\rightarrow\infty}\frac{\sum_{1\leq i\neq j\leq N} m_{i}\,m_{j}\,g(a_{i},a_{j})}{N^{2}}=-v_{f}=\iint g(z,\zeta)\,d\mu^{*}(z)\,d\mu^{*}(\zeta).
\end{equation}
The value of $m_{N}$ can be chosen to be either $0$ or $R$ for every $N\geq 2$.
\end{theorem}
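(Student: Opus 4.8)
\textbf{Proof plan for Theorem~\ref{theo:greedy}.}

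The plan is to mimic the classical Leja/greedy-energy argument (as in the analysis of greedy energy sequences on compact sets), adapted to the variable-mass setting, using the key structural fact that the equilibrium measure $\mu^{*}$ is supported on $A$ and the variational inequalities \eqref{varineq1}--\eqref{varineq2} hold \emph{with constant $C_{f}=0$} on all of $A$ (since $0<\|\mu^{*}\|<R$). The central observation is that for a fixed $z\in A$, the one-variable function $m\mapsto U_{N,f}(z,m)=m\bigl(\sum_{k=1}^{N-1}m_{k}\,g(z,a_{k})+(N-1)f(z)\bigr)$ is linear in $m$, so its infimum over $m\in[0,R]$ is attained at an endpoint: it equals $R\cdot\min\{0,\,\Phi_{N-1}(z)\}$ where $\Phi_{N-1}(z):=\sum_{k=1}^{N-1}m_{k}\,g(z,a_{k})+(N-1)f(z)$. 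This immediately yields the last sentence of the theorem: $m_{N}$ may be chosen in $\{0,R\}$. Thus the greedy step selects $a_{N}\in A$ minimizing $\min\{0,\Phi_{N-1}(z)\}$, i.e.\ minimizing $\Phi_{N-1}(z)$ over $A$ when that minimum is negative, and sets $m_{N}=R$ in that case (and $m_{N}$ arbitrary, say $0$, otherwise).

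The core of the proof is a two-sided estimate on $S_{N}:=E_{N,f}(a_{1},\ldots,a_{N},m_{1},\ldots,m_{N})$. For the \emph{lower} bound, \eqref{ineq:eNvf} from the proof of Theorem~\ref{theo:main:1} already gives $S_{N}\geq e_{N}\geq N^{2}v_{f}$, hence $\liminf_{N}S_{N}/N^{2}\geq v_{f}$; alternatively one repeats the truncated-kernel weak-star argument. For the \emph{upper} bound, I would use the recursive structure $S_{N}=S_{N-1}+2m_{N}\Phi_{N-1}(a_{N})=S_{N-1}+2R\,\min\{0,\min_{z\in A}\Phi_{N-1}(z)\}$ (taking $m_{N}=R$ when beneficial). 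The decisive step is to bound $\min_{z\in A}\Phi_{N-1}(z)$ from above by integrating against $\mu^{*}/\|\mu^{*}\|$: since $\mu^{*}$ is supported in $A$,
\[
\min_{z\in A}\Phi_{N-1}(z)\;\leq\;\frac{1}{\|\mu^{*}\|}\int_{A}\Phi_{N-1}(z)\,d\mu^{*}(z)
\;=\;\frac{1}{\|\mu^{*}\|}\sum_{k=1}^{N-1}m_{k}\,U^{\mu^{*}}_{G}(a_{k})+\frac{(N-1)}{\|\mu^{*}\|}\int f\,d\mu^{*}.
\]
Now invoke \eqref{varineq1} with $C_{f}=0$: $U^{\mu^{*}}_{G}(a_{k})\leq -f(a_{k})$ wherever it applies (and one must handle the q.e.\ exceptional set — see below), so $\sum_{k=1}^{N-1}m_{k}U^{\mu^{*}}_{G}(a_{k})\leq -\sum_{k=1}^{N-1}m_{k}f(a_{k})$. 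Combined with $\int f\,d\mu^{*}=v_{f}$ (Theorem~\ref{theo:basicprop}(ii), valid since $\|\mu^{*}\|<R$), this gives a recursive inequality of the form $S_{N}\leq S_{N-1}+2R\cdot\bigl(-\tfrac{1}{\|\mu^{*}\|}\sum_{k=1}^{N-1}m_{k}f(a_{k})+\tfrac{(N-1)}{\|\mu^{*}\|}v_{f}\bigr)^{-}$; using $\int f d\mu^* = v_f<0$ and $\|\mu^*\|\le R$ one checks the bracket is $\le (N-1)v_f/\|\mu^*\| + \dots$, and a telescoping/Cesàro summation yields $\limsup_{N}S_{N}/N^{2}\leq v_{f}$. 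Matching the two bounds gives $\lim S_{N}/N^{2}=v_{f}=\int f\,d\mu^{*}$, which is \eqref{asympenergygreedy}.

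Once the energy asymptotics hold, the weak-star convergence of $\nu_{N}:=\frac{1}{N}\sum_{k=1}^{N}m_{k}\delta_{a_{k}}\in\mathcal{B}_{R}(K)$ to $\mu^{*}$ follows exactly as in the last paragraph of the proof of Theorem~\ref{theo:main:1}: any weak-star limit $\tau$ of a subsequence lies in $\mathcal{B}_{R}(K)$, satisfies $J_{f}(\tau)\leq \liminf S_{N_{j}}/N_{j}^{2}=v_{f}$ by the truncated-kernel lower semicontinuity argument \eqref{deftruncGK}--\eqref{halfestimate}, hence $\tau=\mu^{*}$ by uniqueness. For \eqref{asympfs} and \eqref{asympgas} I would argue: weak-star convergence plus continuity issues give $\liminf \frac{1}{N}\sum m_{k}f(a_{k})\geq \int f\,d\mu^{*}$ (lower semicontinuity of $f$) and $\liminf \frac{1}{N^{2}}\sum_{i\neq j}m_im_jg(a_i,a_j)\geq \iint g\,d\mu^{*}d\mu^{*}$ (truncated kernel again); since their sum tends to $v_{f}=\int f d\mu^{*}$ and $\iint g\,d\mu^{*}d\mu^{*}=-v_f$ by Theorem~\ref{theo:basicprop}(ii), each $\liminf$ must in fact be the corresponding limit, forcing \eqref{asympfs} and \eqref{asympgas}.

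The main obstacle I anticipate is the handling of the \emph{quasi-everywhere} exceptional set in \eqref{varineq1}: the greedy points $a_{k}$ need not avoid a set of capacity zero, so $U^{\mu^{*}}_{G}(a_{k})\leq -f(a_{k})$ cannot be asserted pointwise at every $a_k$. The standard remedy is that a polar set carries no mass of any measure of finite energy, and more usefully that $\liminf$-type lower bounds survive: one replaces the pointwise bound by an integrated bound or uses that $U^{\mu^*}_G + f \ge 0$ holds on the (compact, full-capacity) set where $f$ is finite together with the fact that the offending points contribute negligibly in the Cesàro average. Making this rigorous — essentially showing the greedy selection cannot be ``fooled'' into clustering on the polar exceptional set, which is automatic because on that set $\Phi_{N-1}$ would be $+\infty$ or the selection rule picks the $\mu^*$-comparison point anyway — is the one place where care is genuinely needed; everything else is a routine adaptation of the classical greedy-energy machinery.
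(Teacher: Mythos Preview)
Your strategy matches the paper's: bound each greedy step from above by integrating the comparison functional against $\mu^{*}/\|\mu^{*}\|$, then telescope. Two corrections are needed, one conceptual and one technical.

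\textbf{Conceptual.} The ``main obstacle'' you identify is a phantom. You cite \eqref{varineq1}, but that is the inequality $U^{\mu^{*}}_{G}+f\geq C_{f}$ q.e.\ on $K$ --- the wrong direction. What you actually need, $U^{\mu^{*}}_{G}(a_{k})+f(a_{k})\leq 0$, is simply the \emph{definition} of membership in $A=\{z\in K:U^{\mu^{*}}_{G}(z)+f(z)\leq C_{f}\}$; since every $a_{k}$ is chosen in $A$, it holds pointwise with no exceptional set. There is nothing to handle.

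\textbf{Technical.} Your recursion $S_{N}=S_{N-1}+2m_{N}\Phi_{N-1}(a_{N})$ drops the terms $2m_{N}f(a_{N})+2\sum_{j<N}m_{j}f(a_{j})$ coming from the change $2(N-1)\sum\to 2N\sum$ in the external-field part; summed over $N$ these are $O(N^{2})$, not lower order. Moreover, comparing with $m_{N}=R$ forces a factor $R/\|\mu^{*}\|>1$ into your upper bound, and the telescoped estimate then fails to close to $v_{f}$. The fix (which is what the paper does) is to exploit the full greedy freedom in $m$: use $U_{N,f}(a_{N},m_{N})\leq U_{N,f}(z,\|\mu^{*}\|)=\|\mu^{*}\|\Phi_{N-1}(z)$ for all $z\in A$ (legitimate since $\|\mu^{*}\|\in I$), integrate against $\mu^{*}/\|\mu^{*}\|$, and apply $U^{\mu^{*}}_{G}(a_{k})\leq -f(a_{k})$ to get $U_{N,f}(a_{N},m_{N})\leq -\sum_{k<N}m_{k}f(a_{k})+(N-1)v_{f}$. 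In the \emph{correct} recursion the term $-\sum_{k<N} m_{k}f(a_{k})$ cancels exactly against the correction you dropped, leaving $S_{N}-S_{N-1}\leq 2(N-1)v_{f}+2m_{N}f(a_{N})$ and hence $S_{N}\leq N(N-1)v_{f}+2\sum_{k}m_{k}f(a_{k})$, which is the paper's bound \eqref{ineqENfgreedy}. Since $f$ is bounded on $A$, the last sum is $O(N)$ and $\limsup S_{N}/N^{2}\leq v_{f}$ follows.

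Your route to \eqref{asympfs}--\eqref{asympgas} via the two liminf's summing to the known limit is valid; the paper instead bounds $\limsup\int f\,d\eta_{N}$ directly using $f\leq -U^{\mu^{*}}_{G}$ on $A$ together with lower semicontinuity of $U^{\mu^{*}}_{G}$.
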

\begin{proof}
For every $N\geq 2$ we have 
\begin{gather*}
E_{N,f}(a_{1},\ldots,a_{N},m_{1},\ldots,m_{N})=2\sum_{1\leq i<j\leq N} m_{i}\,m_{j}\,g(a_{i},a_{j})+2N\sum_{j=1}^{N} m_{j}\,f(a_{j})\\
=2\sum_{j=2}^{N}\Big(\sum_{i=1}^{j-1} m_{i}\,m_{j}\,g(a_{i},a_{j})+(j-1)\,m_{j}\, f(a_{j})+\sum_{i=1}^{j-1}m_{i}\,f(a_{i})\Big)+2\sum_{k=1}^{N}m_{k}\,f(a_{k})\\
=2\sum_{j=2}^{N}\Big(U_{j,f}(a_{j},m_{j})+\sum_{i=1}^{j-1}m_{i}\,f(a_{i})\Big)+2\sum_{k=1}^{N}m_{k}\,f(a_{k}).
\end{gather*}
Applying now \eqref{minpropUNf}, for every $(z,m)\in A\times I$ we get
\begin{gather*}
E_{N,f}(a_{1},\ldots,a_{N},m_{1},\ldots,m_{N})\leq 2\sum_{j=2}^{N}\Big(U_{j,f}(z,m)+\sum_{i=1}^{j-1}m_{i}\,f(a_{i})\Big)+2\sum_{k=1}^{N}m_{k}\,f(a_{k})\\
=2\sum_{j=2}^{N}\sum_{i=1}^{j-1}(m_{i}\,m\,g(z,a_{i})+m\,f(z)+m_{i}\,f(a_{i}))+2\sum_{k=1}^{N}m_{k}\,f(a_{k}).
\end{gather*}
Let $\sigma=\mu^{*}/\|\mu^{*}\|$. Taking $m=\|\mu^{*}\|$ and integrating both sides of the previous inequality with respect to the probability measure $\sigma$, we get
\[
E_{N,f}(a_{1},\ldots,a_{N},m_{1},\ldots,m_{N})\leq 2\sum_{j=2}^{N}\sum_{i=1}^{j-1}\Big(m_{i}\,U^{\mu^{*}}_{G}(a_{i})+\int f\,d\mu^{*}+m_{i}\,f(a_{i})\Big)+2\sum_{k=1}^{N}m_{k}\,f(a_{k}).
\]
Observe that $U_{G}^{\mu^{*}}(a_{i})+f(a_{i})\leq C_{f}=0$, and $v_{f}=\int f\,d\mu^{*}$, see Theorems \ref{theo:fund} and  \ref{theo:basicprop}. So from the previous inequality we obtain
\begin{equation}\label{ineqENfgreedy}
E_{N,f}(a_{1},\ldots,a_{N},m_{1},\ldots,m_{N})\leq (N-1)N\,v_{f}+2\sum_{k=1}^{N}m_{k}\,f(a_{k}).
\end{equation}
Since $f$ is bounded on $A$, and $e_{N}\leq E_{N,f}(a_{1},\ldots,a_{N},m_{1},\ldots,m_{N})$ for all $N$, from \eqref{ineqENfgreedy} and \eqref{eq:limeNvf} we deduce that \eqref{asympenergygreedy} holds.

Let $\eta_{N}:=\frac{1}{N}\sum_{k=1}^{N}m_{k}\,\delta_{a_{k}}$. Arguing as in the proof of Theorem~\ref{theo:fund}, if $(\eta_{N_{k}})$ is a convergent subsequence with limit $\tau$, then from \eqref{asympenergygreedy} we deduce that $J_{f}(\tau)=v_{f}$, and therefore $\tau=\mu^{*}$. Since every convergent subsequence converges to $\mu^{*}$, the entire sequence $(\eta_{N})$ converges to $\mu^{*}$.

By the lower semicontinuity of $f$ and $U_{G}^{\mu^{*}}$, we get
\begin{align*}
\int f\,d\mu^{*} & \leq \liminf_{N\rightarrow\infty}\int f\,d\eta_{N},\\
\limsup_{N\rightarrow\infty}\int -U_{G}^{\mu^{*}}\,d\eta_{N} & \leq \int -U_{G}^{\mu^{*}}\,d\mu^{*}.
\end{align*}
On $A$ we have $f\leq -U_{G}^{\mu^{*}}$, hence
\[
\limsup_{N\rightarrow\infty}\int f\,d\eta_{N}\leq \int -U^{\mu^{*}}_{G}\,d\mu^{*}=\int f\,d\mu^{*},
\]
where in the equality we used that $f=-U^{\mu^{*}}_{G}$ holds $\mu^{*}$-almost everywhere. This is a consequence of the fact that $f=-U^{\mu^{*}}_{G}$ q.e. on $\supp(\mu^{*})$ (see \eqref{varineq1} and \eqref{varineq2}), $\mu^{*}$ has finite logarithmic energy, and so by Theorem 3.2.3 in \cite{Rans}, the $\mu^{*}$-measure of the set $\{z\in \supp(\mu^{*}): f(z)\neq -U_{G}^{\mu^{*}}(z)\}$ must be zero. So we conclude that 
\[
\lim_{N\rightarrow\infty}\int f\,d\eta_{N}=\lim_{N\rightarrow\infty}\frac{1}{N}\sum_{k=1}^{N}m_{k}\,f(a_{k})=\int f\,d\mu^{*}.
\]
Then \eqref{asympgas} follows from \eqref{asympfs} and \eqref{asympenergygreedy}. 

Fix $N\geq 2$ and let $\chi(z)=\sum_{k=1}^{N-1}(m_{k}\,g(z,a_{k})+f(z))$. Then $U_{N,f}(z,m)=m\,\chi(z)$. If $\min_{z\in A}\chi(z)>0$, then clearly $m_{N}=0$, and if $\min_{z\in A}\chi(z)<0$, then necessarily $m_{N}=R$. If $\min_{z\in A}\chi(z)=0$, then one can choose $m_{N}=0$.
\end{proof}

Note that in Theorem~\ref{theo:greedy} we have $v_{f}<0$, since $v_{f}=0$ would imply that $\mu^{*}$ is the zero measure. 

Finally, we remark that following the argument in the proof of equation (2.5) in \cite{Lop}, one can show that under the same hypothesis of Theorem~\ref{theo:greedy}, we have
\[
\lim_{N\rightarrow\infty}\frac{U_{N,f}(a_{N},m_{N})}{N}=0.
\]
It is then natural to expect that $U_{N,f}(a_{N},m_{N})=\mathcal{O}(\log N)$. If so, an interesting problem would be the study of the asymptotic behavior of $U_{N,f}(a_{N},m_{N})/\log N$ in case that this is the proper normalization of $U_{N,f}(a_{N},m_{N})$.

\section{Constrained variational problem, and discrete energy with prescribed particle positions}\label{sec:constrained}

In this section we discuss the relation between the continuous energy problem with upper constraint and the discrete energy problem with prescribed particle positions.

If $h:K\rightarrow\mathbb{R}\cup\{+\infty\}$ is a function and $\sigma$ is a positive measure on $K$, recall 
\[
\mathrm{ess\,inf}_{\sigma}\,h=\sup\,\{r\in\mathbb{R}: h(x)\geq r\,\,\mbox{holds}\,\,\sigma\,\,\mbox{a.e. on}\,\,K\}.
\]
The essential infimum is understood to be $-\infty$ if the set of essential lower bounds is empty.

\begin{theorem}\label{theo:fund:constr}
Let $\lambda$ be a probability measure on $K$ with finite Green energy. There exists a unique measure $\mu_{\lambda}^{*}$ satisfying $0\leq \mu_{\lambda}^{*}\leq R\lambda$ and $J_{f}(\mu_{\lambda}^{*})=v_{f,\lambda}=\inf\{J_{f}(\mu): 0\leq \mu\leq R\lambda\}$. Let $\rho_{\lambda}:=R\lambda-\mu_{\lambda}^{*}$ and $S_{\lambda}:=\supp(\mu_{\lambda}^{*})$. If $\|\mu_{\lambda}^{*}\|>0$, then $v_{f,\lambda}<0$ and
\begin{equation}\label{newvarineq}
\sup_{z\in S_{\lambda}}\,(U^{\mu_{\lambda}^{*}}_{G}(z)+f(z))\leq \mathrm{ess\,inf}_{\rho_{\lambda}}\,(U^{\mu_{\lambda}^{*}}_{G}(z)+f(z)).
\end{equation}
\end{theorem}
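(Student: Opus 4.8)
The plan is to mirror, as closely as possible, the structure of the proof of Theorem~\ref{theo:fund}, adapting each ingredient to the presence of the upper constraint $0\le\mu\le R\lambda$. For \textbf{uniqueness}, I would argue exactly as before: if $\mu_1,\mu_2$ are two minimizers then both have finite Green energy (since $v_{f,\lambda}$ is finite), the midpoint $\tfrac12(\mu_1+\mu_2)$ still satisfies $0\le\mu\le R\lambda$, and the strict positivity of the Green energy of the signed measure $\nu=\tfrac12(\mu_1-\mu_2)$ (Theorem II.5.6 in \cite{SaffTotik}) together with the parallelogram-type identity $\mathcal I + J_f(\tfrac12(\mu_1+\mu_2)) = \tfrac12 J_f(\mu_1)+\tfrac12 J_f(\mu_2)$ forces $\nu=0$. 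For \textbf{existence}, I would take a minimizing sequence $\nu_N$ with $J_f(\nu_N)<v_{f,\lambda}+1/N$, extract a weak-star convergent subsequence via Helly, and note that the constraint set $\{\mu:0\le\mu\le R\lambda\}$ is weak-star closed (if $\nu_{N_k}\to\nu$ weak-star and $R\lambda-\nu_{N_k}\ge0$, then $R\lambda-\nu\ge0$ by weak-star closedness of the cone of positive measures), so the limit is admissible; lower semicontinuity of $J_f$ then gives $J_f(\nu)\le v_{f,\lambda}$, hence equality. The claim $v_{f,\lambda}<0$ when $\|\mu_\lambda^*\|>0$ is immediate: $v_{f,\lambda}=0$ would make the zero measure a second minimizer, contradicting uniqueness.

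The substantive part is the variational inequality \eqref{newvarineq}. The natural idea is a two-sided perturbation: since $\mu_\lambda^*$ minimizes and the admissible set is convex, for any admissible $\mu$ the measure $\mu_\lambda^*+t(\mu-\mu_\lambda^*)$ is admissible for $t\in[0,1]$, and the derivative of $J_f$ at $t=0^+$ along this direction is nonnegative, i.e.
\[
\int (U^{\mu_\lambda^*}_G+f)\,d\mu \;\ge\; \int (U^{\mu_\lambda^*}_G+f)\,d\mu_\lambda^*
\]
for every admissible $\mu$. To extract a pointwise statement from this, I would test against two families of competitor measures. On one hand, one can \emph{move mass out of the support}: decreasing $\mu_\lambda^*$ on a set where $U^{\mu_\lambda^*}_G+f$ is large and, to compensate, increasing it within the ``room'' $\rho_\lambda=R\lambda-\mu_\lambda^*$ on a set where $U^{\mu_\lambda^*}_G+f$ is small. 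Concretely, for Borel sets $A\subset S_\lambda$ and $B$ with $\rho_\lambda(B)>0$, one forms $\mu = \mu_\lambda^* - s\,\mu_\lambda^*|_A + s'\,\rho_\lambda|_B$ with the total masses matched (if one wants to stay in the same mass class, though matching is not strictly required here since the constraint is only the upper bound and positivity) and feeds it into the first-order inequality. Letting the sets $A$ and $B$ shrink to points where the respective potentials approach $\sup_{S_\lambda}(U^{\mu_\lambda^*}_G+f)$ and $\operatorname{ess\,inf}_{\rho_\lambda}(U^{\mu_\lambda^*}_G+f)$, and using finiteness of the Green energy of $\lambda$ (property P4 / the hypothesis on $\lambda$) to guarantee the relevant potential integrals are finite, yields \eqref{newvarineq}. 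One must be careful that $U^{\mu_\lambda^*}_G$ is only defined quasi-everywhere, but $\mu_\lambda^*$ and $\rho_\lambda\le R\lambda$ both put no mass on polar sets (finite Green energy), so "q.e." upgrades to "a.e." with respect to both measures, which is exactly what the $\sup_{S_\lambda}$ and $\operatorname{ess\,inf}_{\rho_\lambda}$ formulation needs.

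The main obstacle I anticipate is the rigorous passage from the integrated first-order inequality to the pointwise essential inequality \eqref{newvarineq}, specifically handling the two cases $\|\mu_\lambda^*\|=R\|\lambda\|$ versus $\|\mu_\lambda^*\|<R$ (in the latter case $\rho_\lambda$ has positive mass "everywhere $\lambda$ does", making the essential infimum genuinely informative; in the former case $\rho_\lambda$ could be small or even zero, and one should check the statement is still vacuously/correctly true). A clean way around the bookkeeping is the following: suppose \eqref{newvarineq} fails, so there exist $c_1>c_2$, a set $A\subset S_\lambda$ with $\mu_\lambda^*(A)>0$ on which $U^{\mu_\lambda^*}_G+f\ge c_1$, and a set $B$ with $\rho_\lambda(B)>0$ on which $U^{\mu_\lambda^*}_G+f\le c_2$; then transferring an infinitesimal amount of mass $t$ from $A$ (rescaling $\mu_\lambda^*|_A$) to $B$ (adding a multiple of $\rho_\lambda|_B$, which keeps us $\le R\lambda$) changes $J_f$ by $2t(c_2-c_1)\cdot(\text{positive const}) + O(t^2)<0$ for small $t$, contradicting minimality. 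This contrapositive form sidesteps differentiating $J_f$ along a one-parameter family and only requires the elementary expansion of $J_f(\mu_\lambda^*+t\xi)$ in $t$ for the explicit signed measure $\xi$, plus the finiteness of $\iint g\,d|\xi|\,d|\xi|$ which follows from $|\xi|\le (\text{const})(\mu_\lambda^*+R\lambda)$ and the finite Green energies of $\mu_\lambda^*$ and $\lambda$.
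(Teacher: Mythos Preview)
Your proposal is correct and essentially identical to the paper's proof: Helly plus weak-star closedness of $\{0\le\mu\le R\lambda\}$ for existence, the parallelogram/convexity argument for uniqueness, and the Dragnev--Saff contrapositive mass-transfer perturbation for \eqref{newvarineq}. The only point to make explicit in your contrapositive is how one obtains a set $A\subset S_\lambda$ of \emph{positive} $\mu_\lambda^*$-measure on which $U^{\mu_\lambda^*}_G+f>c_1$: the paper uses lower semicontinuity of $U^{\mu_\lambda^*}_G+f$ at a single point $z_0\in S_\lambda=\mathrm{supp}(\mu_\lambda^*)$ where the value exceeds $c_1$ to produce an open neighborhood, which then has positive $\mu_\lambda^*$-measure.
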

\begin{proof}
By \eqref{ineqvfvfl}, the constant $v_{f,\lambda}$ is finite. For each $N\geq 1$, let $\tau_{N}$ be a measure satisfying
\[
J_{f}(\tau_{N})< v_{f,\lambda}+\frac{1}{N}
\]
and $0\leq \tau_{N}\leq R\lambda$. Let $(\tau_{N_{k}})$ be a subsequence that converges in the weak-star topology to a measure $\tau\in \mathcal{B}_{R}(K)$. Then, as argued in the proof of Theorem~\ref{theo:dynamic:constr} below, we have $\tau\leq R\lambda$. By the lower semicontinuity of $J_{f}$, 
\[
J_{f}(\tau)\leq \liminf_{k\rightarrow\infty}J_{f}(\tau_{N_{k}})=v_{f,\lambda},
\]
hence $J_{f}(\tau)=v_{f,\lambda}$, so a minimizer exists. The space $\{\mu:0\leq \mu\leq R\lambda\}$ is convex, so the uniqueness of the minimizer follows as in the proof of  Theorem~\ref{theo:fund}. If $\|\mu_{\lambda}^{*}\|>0$ and $v_{f,\lambda}=0$, then $\mu_{\lambda}^{*}$ and the zero measure are two different minimizers of $J_{f}$, which is impossible.

For the proof of \eqref{newvarineq}, we follow closely the argument in the proof of Theorem 2.1(c) in \cite{DragnevSaff}. Assume that \eqref{newvarineq} is false, and let $r_{1}, r_{2}$ be finite constants such that
\[
\sup_{z\in S_{\lambda}}\,(U^{\mu_{\lambda}^{*}}_{G}(z)+f(z))>r_{1}>r_{2}>\mathrm{ess\,inf}_{\rho_{\lambda}}\,(U^{\mu_{\lambda}^{*}}_{G}(z)+f(z)).
\]
Let $z_{0}\in S_{\lambda}$ be a point where $U^{\mu_{\lambda}^{*}}_{G}(z_{0})+f(z_{0})>r_{1}$. Since $U^{\mu_{\lambda}^{*}}_{G}+f$ is lower semicontinuous, there exists $\epsilon>0$ such that 
\[
U^{\mu_{\lambda}^{*}}_{G}(z)+f(z)>r_{1}\quad\mbox{for all}\,\,z\in K_{1}:=\{z: |z-z_{0}|<\epsilon\}\cap S_{\lambda}.
\]
Clearly, $\mu_{\lambda}^{*}(K_{1})>0$. We can also find a set $K_{2}\subset \supp(\rho_{\lambda})$ with $\rho_{\lambda}(K_{2})>0$ such that
\[
U^{\mu_{\lambda}^{*}}_{G}(z)+f(z)<r_{2}\quad\mbox{for all}\,\,z\in K_{2}.
\]
So $K_{1}\cap K_{2}=\emptyset$. Choose constants $0<\alpha<1$ and $0<\beta<1$ such that $\alpha\,\mu_{\lambda}^{*}(K_{1})=\beta\, \rho_{\lambda}(K_{2})$. Let $\eta$ be the signed measure defined to be $-\alpha\, \mu_{\lambda}^{*}$ on $K_{1}$, $\beta\rho_{\lambda}$ on $K_{2}$, and zero elsewhere. Then it is easy to see that for any value of $0<\delta<1$, the measure $\mu_{\lambda}^{*}+\delta\eta$ satisfies $0\leq \mu_{\lambda}^{*}+\delta\eta\leq R\lambda$. Since $\lambda$ has finite Green energy (cf. P4)) and $f$ is bounded on $K_{2}$, the measure $\eta$ also has finite Green energy and the integral $J_{f}(\mu_{\lambda}^{*}+\delta\eta)$ is well-defined and finite. Then one can easily check that
\begin{align*}
J_{f}(\mu^{*}_{\lambda}+\delta\eta)-J_{f}(\mu_{\lambda}^{*}) & =2\delta\int(U^{\mu_{\lambda}^{*}}_{G}+f)\,d\eta+\delta^{2}\iint g(z,t)\,d\eta(z)\,d\eta(t)\\
& \leq 2\delta\,(r_{2}\,\beta\,\rho_{\lambda}(K_{2})-r_{1}\,\alpha\, \mu_{\lambda}^{*}(K_{1}))+\delta^{2}\iint g(z,t)\,d\eta(z)\,d\eta(t)\\
& =-2\delta\,\alpha\,(r_1-r_2)\mu_{\lambda}^{*}(K_1)+\delta^{2}\iint g(z,t)\,d\eta(z)\,d\eta(t)<0
\end{align*}
for all $\delta>0$ sufficiently small. Hence $J_{f}(\mu^{*}_{\lambda}+\delta\eta)<J_{f}(\mu_{\lambda}^{*})$, which is a contradiction.
\end{proof}

\begin{remark}
If $\sup_{z\in S_{\lambda}}\,(U^{\mu_{\lambda}^{*}}_{G}(z)+f(z))$ is finite, then from \eqref{newvarineq} we deduce that there exists a constant $C_{f,\lambda}$ such that
\begin{align*}
U^{\mu_{\lambda}^{*}}_{G}(z)+f(z) & \geq C_{f,\lambda}\quad \mbox{holds}\,\,(R\lambda-\mu_{\lambda}^{*})\,\,\mbox{a.e. on}\,\,K,\\
U^{\mu_{\lambda}^{*}}_{G}(z)+f(z) & \leq C_{f,\lambda}\quad \mbox{holds for all}\,\,z\in S_{\lambda}=\supp(\mu_{\lambda}^{*}).
\end{align*}
Compare this with \eqref{varineq1} and \eqref{varineq2}.
\end{remark}

\begin{remark}
In the problem analyzed in Theorem \ref{theo:fund:constr}, one can easily show that the condition $f\geq 0$ q.e. on $K$ implies that $\mu_{\lambda}^{*}=0$ is the zero measure, but the converse is not true (see for comparison Theorem \ref{theo:basicprop}, $i)$). Indeed, taking $K=[0,1]\cup[2,3]$, $\lambda$ the probability measure given by $\lambda|_{[0,1]}=dx$, $\lambda|_{[2,3]}=0$, and
\[
f(x)=\begin{cases}
0, & x\in[0,1],\\
-1, & x\in[2,3],
\end{cases}
\]
provides a counterexample for any $R>0$.    
\end{remark}

\begin{theorem}\label{theo:dynamic:constr}
Let $\mu_{\lambda,R}^{*}$ be the unique measure satisfying $0\leq \mu_{\lambda,R}^{*}\leq R\lambda$ and $J_{f}(\mu_{\lambda,R}^{*})=v_{f,\lambda}(R):=\inf\{J_{f}(\mu): 0\leq \mu\leq R\lambda\}$. If $(R_{N})_{N}$ is a convergent sequence of positive numbers with limit $R$, then $\mu_{\lambda,R_{N}}^{*}$ converges in the weak-star topology to $\mu_{\lambda,R}^{*}$ and $v_{f,\lambda}(R)=\lim_{N\rightarrow\infty} v_{f,\lambda}(R_{N})$.
\end{theorem}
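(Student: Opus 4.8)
The plan is to transcribe the argument used for Theorem~\ref{theo:dynamic}, the only substantive new point being that the upper constraint is preserved under weak-star limits. By weak-star compactness of the balls $\mathcal{B}_r(K)$ (all the measures $\mu^*_{\lambda,R_N}$ lie in $\mathcal{B}_r(K)$ for $r=\sup_N R_N<\infty$), it suffices to show that every weak-star convergent subsequence of $(\mu^*_{\lambda,R_N})$ has limit $\mu^*_{\lambda,R}$; applying this with the full sequence $(R_N)$ and using continuity of $J_f$ along the comparison measures below then gives $v_{f,\lambda}(R_N)\to v_{f,\lambda}(R)$. The case $R=0$ is immediate: $0\le\mu^*_{\lambda,R_N}\le R_N\lambda$ forces $\|\mu^*_{\lambda,R_N}\|\le R_N\to0$, so $\mu^*_{\lambda,R_N}\to0=\mu^*_{\lambda,0}$ weak-star, and (using $\iint g\,d\lambda\,d\lambda<\infty$) also $v_{f,\lambda}(R_N)=J_f(\mu^*_{\lambda,R_N})\to0=v_{f,\lambda}(0)$. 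So assume $R>0$.

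Let $(\mu^*_{\lambda,R_{N_k}})$ converge weak-star to a measure $\tau$. The key new step — and exactly the fact invoked in the proof of Theorem~\ref{theo:fund:constr} — is that $\tau\le R\lambda$. I would prove this by testing against an arbitrary $\varphi\in C(K)$ with $\varphi\ge0$: from $\mu^*_{\lambda,R_{N_k}}\le R_{N_k}\lambda$ we get $\int\varphi\,d\mu^*_{\lambda,R_{N_k}}\le R_{N_k}\int\varphi\,d\lambda$, and letting $k\to\infty$ gives $\int\varphi\,d\tau\le R\int\varphi\,d\lambda$. Hence $R\lambda-\tau$ defines a nonnegative linear functional on $C(K)$, i.e.\ a positive measure, so $\tau\le R\lambda$ (in particular $\|\tau\|\le R$). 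Thus $\tau$ is admissible for the constrained problem at level $R$, so $J_f(\mu^*_{\lambda,R})\le J_f(\tau)$, while weak-star lower semicontinuity of $J_f$ gives $J_f(\tau)\le\liminf_k J_f(\mu^*_{\lambda,R_{N_k}})$.

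For the reverse estimate I would use the comparison measures $\widehat\mu_k:=(R_{N_k}/R)\,\mu^*_{\lambda,R}$: scaling $0\le\mu^*_{\lambda,R}\le R\lambda$ by the positive factor $R_{N_k}/R$ shows $0\le\widehat\mu_k\le R_{N_k}\lambda$, so $\widehat\mu_k$ is admissible at level $R_{N_k}$ and $J_f(\mu^*_{\lambda,R_{N_k}})\le J_f(\widehat\mu_k)$. Since $v_{f,\lambda}$ is finite, $\mu^*_{\lambda,R}$ has finite Green energy (and finite $\int f\,d\mu^*_{\lambda,R}$), so $t\mapsto J_f(t\,\mu^*_{\lambda,R})$ is a quadratic polynomial and, as $R_{N_k}/R\to1$, $J_f(\widehat\mu_k)\to J_f(\mu^*_{\lambda,R})$. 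Chaining the inequalities yields $J_f(\mu^*_{\lambda,R})\le J_f(\tau)\le\liminf_k J_f(\mu^*_{\lambda,R_{N_k}})\le\limsup_k J_f(\mu^*_{\lambda,R_{N_k}})\le J_f(\mu^*_{\lambda,R})$, so $J_f(\tau)=v_{f,\lambda}(R)$, and uniqueness of the constrained minimizer (Theorem~\ref{theo:fund:constr}) forces $\tau=\mu^*_{\lambda,R}$. Taking the subsequence above to be $(R_N)$ itself gives $v_{f,\lambda}(R_N)=J_f(\mu^*_{\lambda,R_N})\to J_f(\mu^*_{\lambda,R})=v_{f,\lambda}(R)$.

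The genuinely new ingredient relative to Theorem~\ref{theo:dynamic} is the passage of the upper constraint through the weak-star limit, and that is the step to handle with most care; it is clean here because $K$ is compact, so one is merely using duality with $C(K)$, and nonnegativity of the test function is precisely what lets the inequality survive the limit. Everything else is a routine rewriting of the unconstrained argument.
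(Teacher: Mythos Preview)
Your proof is correct and follows essentially the same route as the paper's own proof: reduce to subsequential weak-star limits, pass the upper constraint $\mu\le R\lambda$ through the limit by testing against nonnegative $h\in C(K)$ and invoking Riesz, then sandwich $J_f(\tau)$ between $J_f(\mu^*_{\lambda,R})$ and $\limsup_k J_f(\widehat\mu_k)$ with the same comparison measures $\widehat\mu_k=(R_{N_k}/R)\,\mu^*_{\lambda,R}$. Your explicit treatment of the case $R=0$ and the remark that $t\mapsto J_f(t\,\mu^*_{\lambda,R})$ is a quadratic polynomial are small additions not spelled out in the paper, but they do not change the argument.
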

\begin{proof}
For simplicity of notation, we write $\mu_{N}^{*}$ instead of $\mu_{\lambda,R_{N}}^{*}$. To prove the convergence of $\mu_{N}^{*}$ to $\mu_{\lambda,R}^{*}$, we show that any convergent subsequence of $(\mu_{N}^{*})$ has limit $\mu_{\lambda,R}^{*}$. Let $(\mu_{N_{k}}^{*})$ be a weak-star convergent subsequence with limit $\tau$. We have $\|\tau\|=\lim_{k\rightarrow\infty}\|\mu_{N_{k}}^{*}\|\leq \lim_{k\rightarrow\infty} R_{N_{k}}=R$, so $\tau\in \mathcal{B}_{R}(K)$. Since $\mu_{N_{k}}^{*}\leq R_{N_{k}}\lambda$, for every $h\in C(K)$, $h\geq 0$, we have
\[
\int h\,d(R\lambda-\tau)=\lim_{k\rightarrow\infty}\int h\,d(R_{N_{k}}\lambda-\mu_{N_{k}}^{*})\geq 0.
\]
This implies by the Riesz representation theorem that $R\lambda-\tau\geq 0$. Since $\tau\in \mathcal{B}_{R}(K)$ and $\tau\leq R\lambda$, we obtain $J_{f}(\mu_{\lambda,R}^{*})\leq J_{f}(\tau)$. By the lower semicontinuity of $J_{f}$ in the weak-star topology, we have $J_{f}(\tau)\leq \liminf_{k\rightarrow\infty} J_{f}(\mu_{N_{k}}^{*})$. Let $\widehat{\mu}_{k}:=(R_{N_{k}}/R)\,\mu_{\lambda,R}^{*}$. Then we have $\widehat{\mu}_{k}\leq R_{N_{k}}\lambda$, and so $J_{f}(\mu_{N_{k}}^{*})\leq J_{f}(\mu_{k}^{*})$. The rest of the argument is the same as in the proof of Theorem~\ref{theo:dynamic}.\end{proof}

In the rest of this section, we fix a sequence of point configurations
\[
(x_{j,N})_{1\leq j\leq l_{N}},\quad N\geq 1,\quad l_{N}\geq 2,
\]
in the compact set $K\subset D\setminus\{\infty\}$, where $l_{N}/N\rightarrow 1$, satisfying the properties $\textrm{P1)}$--$\textrm{P4)}$ indicated in the introduction. Let $\lambda$ be the limiting probability measure from $\textrm{P1)}$.

\begin{example}\label{examplemain}
Consider the domain $D=\{z\in\mathbb{C}: \mathrm{Re}(z)>0\}$, with Green function $g(z,\zeta)=\log|(z+\overline{\zeta})/(z-\zeta)|$. Let $\{a_{i}\}_{1}^{s}$ and $\{b_{i}\}_{1}^{s}$ be real numbers such that
\[
0<a_{1}<b_{1}<a_{2}<b_{2}<\cdots <a_{s-1}<b_{s-1}<a_{s}<b_{s}.
\]
Take 
\[
K=\bigcup_{i=1}^{s}[a_{i},b_{i}]
\]
and let $d\lambda(x)=\frac{1}{L}\,dx$ on $K$, where $L=\sum_{i=1}^{s}(b_{i}-a_{i})$. For all $N$ large enough, we partition each interval $[a_{i},b_{i}]$ as a union of subintervals, all having $\lambda$-measure $1/N$ (or length $\Delta=L/N$) except possibly the last subinterval, which has $\lambda$-measure less than $1/N$ if $N(b_{i}-a_{i})/L$ is not an integer. Specifically, we use the partition
\[
[a_{i},b_{i}]=[a_{i},a_{i}+\Delta)\cup[a_{i}+\Delta,a_{i}+2\Delta)\cup[a_{i}+2\Delta,a_{i}+3\Delta)\cup\cdots\cup[a_{i}+(\rho_{i}-1)\Delta,b_{i}]
\]
where $\rho_{i}$ denotes the ceiling of $N(b_{i}-a_{i})/L$. We define the sets $V_{j,N}$ as the subintervals obtained in these partitions, and the points $x_{j,N}$ as the middle points of the subintervals. Then, the distance between two consecutive points $x_{j,N}$ is at least $L/2N$. It is clear that the four properties \textrm{P1)}--\textrm{P4)} are satisfied, and all the constructed partitions are monotonic.\end{example}

\begin{example}\label{examplemain1}
Suppose that $\Gamma$ is a rectifiable Jordan arc in the complex plane, i.e., $\Gamma$ is a plane curve that has a continuous and one-to-one parametrization $\gamma:[a,b]\rightarrow\Gamma$ with finite total variation. Let $K\subset \Gamma$ consists of a finite number of disjoint compact subarcs $K_{j}$, $j=1,\ldots,s$, of $\Gamma$, and let $\lambda$ be the arclength measure on $K$ normalized so that $\lambda(K)=1$. In this example we show how to construct monotonic partitions of $K$ that satisfy property $\mathrm{P2)}$. Fix $N\geq 1$, and split $K$ into $N$ successive disjoint subarcs (or unions of disjoint subarcs) $M_j$, such that $\l(M_j)=1/N$, while traversing $K$ in the positive direction. It is clear that ${\rm diam}  (M_j)\leq \frac LN$ for every $M_j$ that lies on a single subcompact $K_m$. If $M_j$ has nontrivial (positive measure) intersection with $k>1$ subcompacts $K_{m_1}, \dots, K_{m_k}$, we split it into  $k$ subarcs $M_j\cap  K_{m_i}$, $i=1,\dots,k$. The obtained partition $\mathcal{P}_N$ of $K$ consists of $l_N\leq N+s$ elements of partition $V_{j,N}$, each with $\lambda$-measure not exceeding $\frac 1N$ and with diameter not exceeding $\frac LN$. Thus, the constructed partition satisfies property $\mathrm{P2)}$. The monotonicity of the partition is clear from the construction.
\end{example}

\begin{lemma}\label{prelimlem}
Let $\mu\in \mathcal{B}_{R}(K)$. Then $\mu\leq R\lambda$ if and only if there exist numbers $0\leq m_{j,N}\leq R$, $1\leq j\leq l_N$, such that the sequence of discrete measures $\mu_{N}=\frac{1}{l_N}\sum_{j=1}^{l_N} m_{j,N}\,\delta_{x_{j,N}}$ converges in the weak-star topology to $\mu$ as $N$ tends to infinity.
\end{lemma}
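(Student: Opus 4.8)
The plan is to prove the two directions separately. For the ``if'' direction, suppose such numbers $m_{j,N}\in[0,R]$ exist with $\mu_N\to\mu$ weak-star. I would show $\mu\le R\lambda$ by testing against nonnegative continuous functions: for $h\in C(K)$, $h\ge 0$, I want $\int h\,d(R\lambda-\mu)\ge 0$, which by the Riesz representation theorem forces $R\lambda-\mu\ge 0$. The natural route is to compare $\int h\,d\mu_N=\frac{1}{l_N}\sum_j m_{j,N}h(x_{j,N})$ with $R\int h\,d\lambda_N=\frac{R}{l_N}\sum_j h(x_{j,N})$ (recall $\lambda_N=\frac{1}{l_N}\sum_j\delta_{x_{j,N}}\to\lambda$ by P1)). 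Since $m_{j,N}\le R$, we have $\int h\,d\mu_N\le R\int h\,d\lambda_N$ for every such $h$, and passing to the limit gives $\int h\,d\mu\le R\int h\,d\lambda$. This direction is routine.

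The substantive direction is ``only if'': given $\mu\le R\lambda$ with $\|\mu\|\le R$, I must construct the discrete approximants. The idea is to use the partition $\mathcal P_N=\{V_{1,N},\dots,V_{l_N,N}\}$ from property P2) to discretize $\mu$. For each $N$ and each $j$, I would set
\[
m_{j,N}:=\frac{l_N\,\mu(V_{j,N})}{N\,\lambda(V_{j,N})}\quad\text{if }\lambda(V_{j,N})>0,\qquad m_{j,N}:=0\quad\text{if }\lambda(V_{j,N})=0,
\]
noting that $\mu\le R\lambda$ forces $\mu(V_{j,N})=0$ whenever $\lambda(V_{j,N})=0$, and in general $\mu(V_{j,N})\le R\,\lambda(V_{j,N})$, so that $m_{j,N}\le R l_N/N\to R$; a minor adjustment (capping at $R$, or using $\min(l_N,N)$ in place of $l_N$, or simply observing $l_N/N\to1$ and rescaling) keeps all the masses in $[0,R]$ for $N$ large, which is all that is needed for a weak-star limit statement. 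With this choice, $\frac{1}{l_N}\sum_j m_{j,N}\,\mathbf 1_{V_{j,N}}\,d\lambda=\frac1N\sum_j \mu(V_{j,N})\,\frac{\mathbf 1_{V_{j,N}}}{\lambda(V_{j,N})}\,d\lambda$, an average that should be compared with $\mu$ itself.

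The key step, and the main obstacle, is showing $\mu_N=\frac{1}{l_N}\sum_j m_{j,N}\delta_{x_{j,N}}\to\mu$ weak-star. I would split the error for a test function $h\in C(K)$ into two pieces: first, replacing $\delta_{x_{j,N}}$ by the normalized restriction $\frac{1}{\lambda(V_{j,N})}\lambda|_{V_{j,N}}$ costs at most the modulus of continuity of $h$ at scale $\kappa_N\to0$ (property P2)ii) controls $\mathrm{diam}(V_{j,N})\le\kappa_N$, and $x_{j,N}\in V_{j,N}$), since $h$ is uniformly continuous on the compact set $K$; second, I must argue that the ``conditional-expectation'' measure $\nu_N$ defined by $d\nu_N=\sum_j \frac{\mu(V_{j,N})}{\lambda(V_{j,N})}\mathbf 1_{V_{j,N}}\,d\lambda$ converges weak-star to $\mu$. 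This last point is a martingale-type convergence statement: since $\mathrm{diam}(V_{j,N})\to0$ uniformly, the partitions $\mathcal P_N$ generate (in the limit) the Borel $\sigma$-algebra on $K$, and $\nu_N$ is exactly $\mathbb E[d\mu/d(R\lambda)\mid \mathcal P_N]\,d(R\lambda)$ up to normalization, whose integral against a fixed continuous $h$ converges to $\int h\,d\mu$ by approximating $h$ by functions constant on the cells of $\mathcal P_N$. I expect the cleanest write-up avoids explicit martingale machinery: for $h\in C(K)$ uniformly continuous, $\int h\,d\nu_N=\sum_j\mu(V_{j,N})\cdot\frac{1}{\lambda(V_{j,N})}\int_{V_{j,N}}h\,d\lambda$, and both $h(x_{j,N})$ and the cell-average $\frac{1}{\lambda(V_{j,N})}\int_{V_{j,N}}h\,d\lambda$ differ from $h$ on $V_{j,N}$ by at most $\omega_h(\kappa_N)$, so $\big|\int h\,d\nu_N-\int h\,d\mu\big|\le\|\mu\|\,\omega_h(\kappa_N)\to0$, and similarly $\big|\int h\,d\mu_N-\int h\,d\nu_N\big|\to0$; the factor $l_N/N\to1$ is harmless. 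Property P3) and P4) are not needed here.
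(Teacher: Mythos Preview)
Your ``if'' direction is correct and is exactly the paper's argument.

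In the ``only if'' direction the paper makes a simpler choice: it sets $m_{j,N}:=N\,\mu(V_{j,N})$. Property~P2)i) gives $\mu(V_{j,N})\le R\lambda(V_{j,N})\le R/N$, so $m_{j,N}\in[0,R]$ automatically, with no capping needed. Then $\int h\,d\mu_N=\frac{N}{l_N}\sum_j \mu(V_{j,N})\,h(x_{j,N})$, and since $\int h\,d\mu=\sum_j\int_{V_{j,N}}h\,d\mu$, a single modulus-of-continuity estimate (plus $l_N/N\to1$) gives $\mu_N\to\mu$.

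Your choice $m_{j,N}=l_N\,\mu(V_{j,N})/(N\,\lambda(V_{j,N}))$ is more involved, and your sketch contains a genuine gap at the step ``similarly $|\int h\,d\mu_N-\int h\,d\nu_N|\to0$''. With your $m_{j,N}$ one has
\[
\int h\,d\mu_N=\frac{1}{N}\sum_j \frac{\mu(V_{j,N})}{\lambda(V_{j,N})}\,h(x_{j,N}),\qquad
\int h\,d\nu_N=\sum_j \mu(V_{j,N})\cdot\frac{1}{\lambda(V_{j,N})}\int_{V_{j,N}}h\,d\lambda,
\]
and after replacing $h(x_{j,N})$ by the cell average (which costs $O(\omega_h(\kappa_N))$) the two expressions still differ by the factor $1/(N\lambda(V_{j,N}))$ versus $1$ in each term. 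This is \emph{not} controlled by uniform continuity of $h$; it requires the separate observation that $\sum_j\bigl(\tfrac{1}{N}-\lambda(V_{j,N})\bigr)=\tfrac{l_N}{N}-1\to0$ with every summand nonnegative (by P2)i)), so that $\sum_j \mu(V_{j,N})\bigl(\tfrac{1}{N\lambda(V_{j,N})}-1\bigr)\le R\sum_j\bigl(\tfrac{1}{N}-\lambda(V_{j,N})\bigr)\to0$. Once you add this estimate your argument goes through, but the paper's choice of $m_{j,N}$ sidesteps the issue entirely and needs no intermediate measure $\nu_N$.
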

\begin{proof}
Assume first that $\mu$ is the weak-star limit of $\mu_{N}=\frac{1}{l_{N}}\sum_{j=1}^{l_{N}} m_{j,N}\,\delta_{x_{j,N}}$, with $0\leq m_{j,N}\leq R$ for all $j$. Then
\[
\frac{1}{R}\mu_{N}=\frac{1}{R\,l_{N}}\sum_{j=1}^{l_N}m_{j,N}\,\delta_{x_{j,N}}\leq \frac{1}{l_{N}}\sum_{j=1}^{l_{N}}\delta_{x_{j,N}}=\lambda_{N}.
\]
So for every $h\in C(K)$, $h\geq 0$, we have
\[
\int h\,d(\lambda_{N}-\frac{1}{R}\,\mu_{N})\geq 0
\]
and so
\[
0\leq \int h\,d(\lambda-\frac{1}{R}\,\mu)=\lim_{N\rightarrow\infty}\int h\,d(\lambda_{N}-\frac{1}{R}\,\mu_{N}).
\]
Let $\eta=\lambda-\frac{1}{R}\mu$. Since $\eta$ is a finite measure, the map $h\mapsto \int h\,d\eta$ is a bounded and positive linear functional on the space $C(K)$. By the Riesz representation theorems for such functionals, we deduce that $\eta\geq 0$, i.e., $\mu\leq R\lambda$. 

Now assume that $\tau:=\frac{\mu}{R}\leq \lambda$, where $\mu\in \mathcal{B}_{R}(K)$. Then, by property $\mathrm{P2)}$, we have $\tau(V_{j,N})\leq \lambda(V_{j,N})\leq 1/N$ for every $1\leq j\leq l_N$. Let $m_{j,N}:=N\mu(V_{j,N})=RN \tau(V_{j,N})\in[0,R]$, $1\leq j\leq l_N$, and let us show that $\mu$ is the weak-star limit of the sequence of measures $\mu_{N}=\frac{1}{l_{N}}\sum_{j=1}^{l_{N}}m_{j,N}\,\delta_{x_{j,N}}$. Let $h\in C(K)$, with modulus of continuity 
\[
\omega(h;t):=\sup\{|h(x)-h(y)|: |x-y|\leq t,\,\,x,y\in K\}
\]
and let $\|h\|_{K}=\max_{x\in K}|h(x)|$.

We have
\begin{align*}
\left|\int_{K} h\,d\mu_{N}-\int_{K} h\,d\mu\right| & =\left|\sum_{j=1}^{l_{N}}\frac{m_{j,N}}{l_{N}}\,h(x_{j,N})-\sum_{j=1}^{l_N}\int_{V_{j,N}} h\,d\mu\right|\\
& =\left|\frac{N}{l_{N}}\sum_{j=1}^{l_{N}}\frac{m_{j,N}}{N} h(x_{j,N})-\frac{N}{l_{N}}\sum_{j=1}^{l_{N}}\int_{V_{j,N}}\frac{l_{N}}{N}\,h\,d\mu\right|\\
& =\frac{N}{l_{N}}\left|\sum_{j=1}^{l_N}\int_{V_{j,N}}(h(x_{j,N})-\frac{l_{N}}{N}\,h(z))\,d\mu(z)\right|\\
& \leq \frac{N}{l_{N}}\sum_{j=1}^{l_N}\int_{V_{j,N}}|h(x_{j,N})-\frac{l_{N}}{N}\,h(z)|\,d\mu(z).
\end{align*}
Recall that $x_{j,N}\in V_{j,N}$ for all $j$. Therefore for $z\in V_{j,N}$ we have
\begin{align*}
|h(x_{j,N})-\frac{l_{N}}{N}\,h(z)| & \leq |h(x_{j,N})-h(z)|+|1-\frac{l_{N}}{N}||h(z)|\\& \leq \omega(h;\kappa_{N})+|1-\frac{l_{N}}{N}|\|h\|_{K}
\end{align*}
where we used $\mathrm{diam}(V_{j,N})\leq \kappa_{N}$. Since $\mu(V_{j,N})\leq R/N$, we conclude that
\begin{align*}
\left|\int_{K} h\,d\mu_{N}-\int_{K} h\,d\mu\right| & \leq \frac{N}{l_{N}}\sum_{j=1}^{l_{N}}(\omega(h;\kappa_{N})+|1-\frac{l_{N}}{N}|\|h\|_{K})\,\mu(V_{j,N})\\
& \leq R(\omega(h;\kappa_{N})+|1-\frac{l_{N}}{N}|\|h\|_{K})
\end{align*}
and this expression approaches zero as $N$ tends to infinity.
\end{proof}

\begin{remark}
Properties \textrm{P3)} and \textrm{P4)} are clearly not needed in Lemma~\ref{prelimlem}.
\end{remark}

Recall the definitions of the constants $v_{f,\lambda}$ and $d_{N}$, see \eqref{def:vflambda} and \eqref{defdN}. 

\begin{proposition}
We have
\begin{equation}\label{liminfdN}
\liminf_{N\rightarrow\infty}\frac{d_{N}}{N^{2}}\geq v_{f,\lambda}. 
\end{equation}
\end{proposition}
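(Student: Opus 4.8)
The plan is to run the truncated‑kernel argument from the proof of Theorem~\ref{theo:main:1}, adding one new ingredient: that the weak‑star limit of the optimal discrete measures respects the upper constraint $\tau\le R\lambda$. It is precisely this ingredient, which uses property $\mathrm{P1)}$, that upgrades the lower bound from $v_f$ (which would follow at once from \eqref{ineqeNdN} and Theorem~\ref{theo:main:1}) to the sharper $v_{f,\lambda}$.

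First I would observe that for each $N$ the infimum in \eqref{defdN} is attained, since $m\mapsto E_{l_N,f}(x_{1,N},\dots,x_{l_N,N},m)$ is lower semicontinuous on the compact cube $I^{l_N}=[0,R]^{l_N}$; fix a minimizer $(\widehat m_{1,N},\dots,\widehat m_{l_N,N})$ and set $\widehat\mu_N:=\frac1{l_N}\sum_{j=1}^{l_N}\widehat m_{j,N}\,\delta_{x_{j,N}}\in\mathcal B_R(K)$, so that $\frac1R\widehat\mu_N\le\lambda_N:=\frac1{l_N}\sum_{j}\delta_{x_{j,N}}$ (each coefficient $1-\widehat m_{j,N}/R$ is nonnegative). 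Next I would choose a subsequence $(N_k)$ along which $d_{N_k}/N_k^2\to\liminf_N d_N/N^2$ and, by Helly's selection theorem, pass to a further subsequence (kept with the same notation) along which $\widehat\mu_{N_k}\to\tau$ in the weak-star topology, with $\tau\in\mathcal B_R(K)$. For every $h\in C(K)$ with $h\ge0$, property $\mathrm{P1)}$ then gives
\[
\int h\,d(R\lambda-\tau)=\lim_{k\to\infty}\int h\,d\bigl(R\lambda_{N_k}-\widehat\mu_{N_k}\bigr)\ge0,
\]
so by the Riesz representation theorem $R\lambda-\tau\ge0$; hence $\tau$ is admissible in \eqref{def:vflambda}.

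Then I would carry out the truncation estimate. Using $l_N/N\to1$ one writes
\[
\frac{d_N}{N^2}=\frac{l_N^2}{N^2}\left(\frac1{l_N^2}\sum_{1\le i\ne j\le l_N}\widehat m_{i,N}\widehat m_{j,N}\,g(x_{i,N},x_{j,N})+2\int f\,d\widehat\mu_N\right),
\]
and, exactly as in the derivation of \eqref{estimate}, for the truncated kernel $g_M$ of \eqref{deftruncGK} the off‑diagonal double sum is bounded below by $\iint g_M\,d\widehat\mu_N\,d\widehat\mu_N-\frac{MR^2}{l_N}$ (the diagonal contributes $\frac{M}{l_N^2}\sum_j\widehat m_{j,N}^2\le\frac{MR^2}{l_N}$). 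Letting $k\to\infty$ and using that $g_M$ is continuous on $K\times K$, that $f$ is lower semicontinuous, that $MR^2/l_{N_k}\to0$, and that $l_{N_k}^2/N_k^2\to1$, I obtain for every $M>0$
\[
\liminf_{N\to\infty}\frac{d_N}{N^2}=\lim_{k\to\infty}\frac{d_{N_k}}{N_k^2}\ge\iint g_M(z,\zeta)\,d\tau(z)\,d\tau(\zeta)+2\int f\,d\tau .
\]
Finally, letting $M\to\infty$ and applying the Monotone Convergence Theorem ($g_M\uparrow g$) gives $\liminf_N d_N/N^2\ge J_f(\tau)$, and since $0\le\tau\le R\lambda$ this yields $\liminf_N d_N/N^2\ge v_{f,\lambda}$, which is \eqref{liminfdN}.

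The algebra and the truncation step are essentially copied from the proof of Theorem~\ref{theo:main:1}, so the one place that genuinely needs care is the new step: verifying $\tau\le R\lambda$ for the weak-star limit, which is where $\mathrm{P1)}$ enters and where the improvement over $v_f$ comes from. A minor technical point is that the $\liminf$ has to pass through the factor $l_N^2/N^2\to1$ and through the $\liminf$ of $\int f\,d\widehat\mu_{N_k}$; this is harmless because everything being combined is bounded — for $M>\kappa$ one has $\kappa\le g_M\le M$ by \eqref{def:betakappa}, and $\int f\,d\widehat\mu_{N_k}$ lies between $\min\{0,\beta R\}$ and $0$ (the upper bound because $d_{N_k}\le0$ while the Green double sum is nonnegative). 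Note also that only the lower semicontinuity of $f$, and not its continuity, is used in this proposition.
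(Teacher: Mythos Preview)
Your proof is correct and follows essentially the same route as the paper's: extract a convergent subsequence of $d_N/N^2$, pass to a weak-star limit $\tau$ of the associated discrete measures, run the truncated-kernel estimate \eqref{estimate}--\eqref{halfestimate}, and let $M\to\infty$. The only cosmetic difference is that the paper invokes Lemma~\ref{prelimlem} to obtain $\tau\le R\lambda$, whereas you inline that argument directly from property $\mathrm{P1)}$ and the Riesz representation theorem; your handling of the factor $l_N^2/N^2$ is also slightly more explicit than the paper's, but the content is the same.
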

\begin{proof}
For each $N\geq 1$, let $\{\widehat{m}_{j,N}\}_{1\leq j\leq l_N}\subset [0,R]$ be such that
\[
d_{N}=E_{l_{N},f}(x_{1,N},\ldots,x_{l_{N},N},\widehat{m}_{1,N},\ldots,\widehat{m}_{l_{N},N})
\]
holds, and let
\[
\mu_{N}:=\frac{1}{l_{N}}\sum_{j=1}^{l_{N}} \widehat{m}_{j,N}\,\delta_{x_{j,N}}.
\]
By \eqref{eq:limeNvf} and \eqref{ineqeNdN}, the sequence $(\frac{d_{N}}{N^{2}})$ is bounded. Let $(\frac{d_{N_{k}}}{N_{k}^2})$ be a convergent subsequence. The corresponding sequence $(\mu_{N_{k}})$ has a weak-star convergent subsequence with limit $\mu\in \mathcal{B}_{R}(K)$, and by Lemma~\ref{prelimlem} we also have $\mu\leq R\lambda$. For simplicity, the convergent subsequence of $(\mu_{N_{k}})$ will be denoted again as $(\mu_{N_{k}})$. If $g_{M}$ is the truncated Green kernel \eqref{deftruncGK}, then arguing as in \eqref{halfestimate} we obtain
\[
\iint g_{M}(z,\zeta)\,d\mu(z)\,d\mu(\zeta)+2\int f\,d\mu\leq \lim_{k\rightarrow\infty}\frac{d_{N_{k}}}{N_{k}^2},
\]
and letting $M\rightarrow\infty$ we get
\[
v_{f,\lambda}\leq J_{f}(\mu)\leq \lim_{k\rightarrow\infty}\frac{d_{N_{k}}}{N_{k}^2}.
\]
\end{proof}

\begin{remark}
By the same argument used to prove $vii)$ in Theorem \ref{theo:basicprop}, we can say that if $(\widehat{m}_{1,N},\widehat{m}_{2,N},\ldots,\widehat{m}_{l_{N},N})\in[0,R]^{l_{N}}$ is an optimal configuration of masses for \eqref{defdN}, and $0<\widehat{m}_{i,N}<R$ for some $1\leq i\leq l_{N}$, then $f(x_{i,N})$ is finite and 
\[
\sum_{\substack{j=1\\ j\neq i}}^{l_{N}}\widehat{m}_{j,N}\,g(x_{i,N},x_{j,N})=-l_{N}\,f(x_{i,N}).
\]
Thus, if $0<\widehat{m}_{i,N}<R$ for all $1\leq i\leq l_{N}$, then for the vector $\widehat{\mathbf{v}}:=(\widehat{m}_{1,N}\,\,\,\widehat{m}_{2,N}\,\,\,\ldots\,\,\widehat{m}_{l_N,N})^{t}$ we have
\[
G_{N}\,\widehat{\mathbf{v}}=\widehat{F}_{N}
\] 
where $G_{N}$ is the $l_N\times l_N$ symmetric matrix with entries 
\[
G_{N}(i,j)=\begin{cases}
g(x_{i,N},x_{j,N}), & i\neq j,\\
0, & i=j,
\end{cases}
\]
and $\widehat{F}_{N}$ is the column vector $\widehat{F}_{N}=-N (f(x_{1,N})\,\,\,f(x_{2,N})\,\,\,\ldots\,\,\,f(x_{l_N,N}))^{t}$. Thus, if $G_{N}$ is invertible then $\widehat{\mathbf{v}}=G_N^{-1}\widehat{F}_{N}$. 
\end{remark}

The following three lemmas are preparatory for the proof of Theorem~\ref{theo:main:upcon}. 

\begin{lemma}
The function
\[
h(z,t):=\begin{cases}
g(z,t)-\log\frac{1}{|z-t|} & \mbox{if}\,\,z\neq t\\
\lim_{w\rightarrow t}\,(g(w,t)-\log\frac{1}{|w-t|}) & \mbox{if}\,\,z=t
\end{cases}
\]
is continuous on $(D\setminus\{\infty\})\times(D\setminus\{\infty\})$.
\end{lemma}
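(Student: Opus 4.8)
The plan is to use the standard decomposition of the Green function of a domain $D$ in terms of the logarithmic potential and a harmonic ``compensating'' term. Recall that for a domain $D\subset\overline{\mathbb{C}}$ possessing a Green function, one has for fixed $t\in D\setminus\{\infty\}$ the representation
\[
g(z,t)=\log\frac{1}{|z-t|}+h_{t}(z),
\]
where $h_{t}$ is harmonic in $D\setminus\{\infty\}$ (including at $z=t$, since the logarithmic singularity has been subtracted), and in fact $h_{t}(z)=g(z,t)-\log\frac{1}{|z-t|}$ extends harmonically across $z=t$. Thus $h(z,t)$ as defined in the statement is precisely $h_{t}(z)$, and the only issue is to check joint continuity in $(z,t)$, not merely continuity in $z$ for each fixed $t$ (which is immediate) or the existence of the limit defining the diagonal value.

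First I would establish the symmetry $h(z,t)=h(t,z)$, which follows from the symmetry $g(z,t)=g(t,z)$ of the Green function; this reduces some case analysis. Next, I would fix a point $(z_{0},t_{0})\in(D\setminus\{\infty\})\times(D\setminus\{\infty\})$ and treat two cases. \textbf{Case 1: $z_{0}\neq t_{0}$.} Here $g(z,t)$ is continuous in a neighborhood of $(z_{0},t_{0})$ by the general theory (the Green function is jointly continuous off the diagonal), and $\log\frac{1}{|z-t|}$ is obviously continuous there, so $h$ is continuous at $(z_{0},t_{0})$. \textbf{Case 2: $z_{0}=t_{0}=:w_{0}$.} This is the heart of the matter. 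I would pick a small closed disk $\overline{B(w_{0},\rho)}\subset D\setminus\{\infty\}$ and, for each $t\in B(w_{0},\rho/2)$, consider the function $z\mapsto h(z,t)=g(z,t)-\log\frac{1}{|z-t|}$, which is harmonic on $B(w_{0},\rho)$. The key tool is a uniform bound: since $g(z,t)$ is bounded on the compact set $\{(z,t): |z-w_{0}|=\rho,\ |t-w_{0}|\leq\rho/2\}$ (away from the diagonal) and $\log\frac{1}{|z-t|}$ is likewise bounded there, the harmonic functions $z\mapsto h(z,t)$ are uniformly bounded on the circle $|z-w_{0}|=\rho$, hence by the maximum principle uniformly bounded on $\overline{B(w_{0},\rho/2)}$, uniformly in $t$. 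By Harnack-type estimates (or the Poisson integral representation on $B(w_{0},\rho)$ using the boundary values $h(\cdot,t)$ on $|z-w_{0}|=\rho$), this family is equicontinuous on $\overline{B(w_{0},\rho/4)}$. Combined with the fact that for fixed $z$ the map $t\mapsto h(z,t)$ is continuous (again by symmetry and harmonicity), an $\varepsilon/2$ argument gives joint continuity at $(w_{0},w_{0})$: write $|h(z,t)-h(w_{0},w_{0})|\leq |h(z,t)-h(w_{0},t)|+|h(w_{0},t)-h(w_{0},w_{0})|$, bound the first term by equicontinuity in $z$ uniformly in $t$, and the second by continuity of $t\mapsto h(w_{0},t)$. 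In particular this also shows the limit defining $h(z,t)$ on the diagonal exists and equals $h(w_{0},w_{0})$, so the definition is consistent.

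I expect the main obstacle to be making the equicontinuity argument in Case 2 fully rigorous, specifically the uniform (in $t$) control of the harmonic functions $z\mapsto h(z,t)$ near the diagonal. The cleanest route is the Poisson integral: on the disk $B(w_{0},\rho)$ one has $h(z,t)=\frac{1}{2\pi}\int_{0}^{2\pi}\frac{\rho^{2}-|z-w_{0}|^{2}}{|\rho e^{i\theta}-(z-w_{0})|^{2}}\,h(w_{0}+\rho e^{i\theta},t)\,d\theta$, and since the boundary integrand is uniformly bounded in $t$ and the Poisson kernel is smooth in $z$ for $|z-w_{0}|\leq\rho/2$, differentiating under the integral sign yields a uniform Lipschitz bound for $z\mapsto h(z,t)$ on $\overline{B(w_{0},\rho/4)}$ with constant independent of $t$. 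One should also remark that the same reasoning handles the case $\infty\in D$ after a conformal change of variable, or one notes that near any finite point of $D$ the local analysis is unaffected by whether $D$ contains $\infty$; since the statement only concerns $(D\setminus\{\infty\})\times(D\setminus\{\infty\})$, the argument above already covers all required points.
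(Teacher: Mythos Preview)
Your proposal is correct and follows essentially the same strategy as the paper: split into the off-diagonal and diagonal cases, exploit that $z\mapsto h(z,t)$ is harmonic to obtain equicontinuity in $z$ uniformly in $t$, and combine this with continuity of $t\mapsto h(z_{0},t)$ via the $\varepsilon/2$ splitting. The only differences are cosmetic: in Case~1 the paper actually proves the joint continuity of $g$ off the diagonal (via Harnack's inequality applied to $z\mapsto g(z,t)$ on a disk disjoint from $t$) rather than invoking it, and in Case~2 the paper uses Harnack's inequality applied to the positive harmonic function $z\mapsto h(z,t)-\log(r/2)$ in place of your Poisson-integral equicontinuity argument---the two devices being interchangeable here.
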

\begin{proof}
Assume first that $z_{0}, t_{0}$ are distinct points in $D\setminus\{\infty\}$. Let $r>0$ be such that the disks $B(z_{0},r)$ and $B(t_{0},r)$ are disjoint and are contained in $D$, and let us show that $g(z,t)\rightarrow g(z_{0},t_{0})$ as $(z,t)\rightarrow(z_{0},t_{0})$. It is obvious that $g(z_{0},t)=g(t,z_{0})\rightarrow g(t_{0},z_{0})=g(z_{0},t_{0})$ as $t\rightarrow t_{0}$. For a fixed $t\in B(t_{0},r)$, applying Harnack's inequality to the function $z\mapsto g(z,t)$ on $B(z_{0},r)$, we get
\[
\frac{r-|z-z_{0}|}{r+|z-z_{0}|}\,g(z_{0},t)\leq g(z,t)\leq \frac{r+|z-z_{0}|}{r-|z-z_{0}|}\, g(z_{0},t),\qquad z\in B(z_{0},r),
\] 
and from this we obtain that $g(z,t)-g(z_{0},t)\rightarrow 0$ as $(z,t)\rightarrow(z_{0},t_{0})$. This proves the continuity of $h$ at $(z_{0},t_{0})$ when $z_{0}\neq t_{0}$.

Assume now that $z_{0}=t_{0}$. Fix $t$ such that $|t-z_{0}|<\frac{r}{2}$. The function $z\mapsto h(z,t)$ is harmonic in $D\setminus\{\infty\}$, so
\[
\inf_{z\in B(z_{0},r)}h(z,t)=h(z_{t},t)
\]
for some $z_{t}$ such that $|z_{t}-z_{0}|=r$. Therefore
\[
\inf_{z\in B(z_{0},r)} h(z,t)=g(z_{t},t)+\log|z_{t}-t|>\log(r/2).
\] 
Applying Harnack's inequality to the positive harmonic function $z\mapsto h(z,t)-\log(r/2)$  on $B(z_{0},r)$, we deduce that
\begin{equation}\label{Harnackineq}
-\frac{2|z-z_{0}|}{r+|z-z_{0}|}(h(z_{0},t)-\log(r/2))\leq h(z,t)-h(z_{0},t)\leq \frac{2|z-z_{0}|}{r-|z-z_{0}|}(h(z_{0},t)-\log(r/2))
\end{equation}
for all $z\in B(z_{0},r)$, $t\in B(z_{0},r/2)$. It is clear that $h(z_{0},t)\rightarrow h(z_{0},z_{0})$ as $t\rightarrow z_{0}$, so \eqref{Harnackineq} implies the continuity of $h$ at $(z_{0},z_{0})$.
\end{proof}

We deduce from the previous result that for any compact set $B\subset D\setminus\{\infty\}$, the Green function $g(z,\zeta)$ for $D$ admits the form
\begin{equation}\label{repGreenfunc}
g(z,\zeta)=\log\frac{1}{|z-\zeta|}+h(z,\zeta),\qquad (z,\zeta)\in B\times B,
\end{equation}
where $h(z,\zeta)$ is continuous on $B\times B$.  

\begin{lemma}\label{lemmasuffice}
Assume that $K\subset D\setminus\{\infty\}$ and the external field $f$ is continuous on $K$. Suppose that for each $N\geq 1$, there exist constants $\{m_{j,N}\}_{j=1}^{l_{N}}\subset[0,R]$ such that the sequence of measures $\mu_{N}:=\frac{1}{l_{N}}\sum_{j=1}^{l_{N}} m_{j,N}\,\delta_{x_{j,N}}$ converges in the weak-star topology to the minimizer $\mu_{\lambda}^{*}$ and we have
\begin{equation}\label{hypothesislim}
\lim_{N\rightarrow\infty}\frac{1}{l_{N}^2}\sum_{1\leq i\neq j\leq l_{N}} m_{i,N}\,m_{j,N}\,\log\frac{1}{|x_{i,N}-x_{j,N}|}=\iint\log\frac{1}{|z-\zeta|}\,d\mu_{\lambda}^{*}(z)\,d\mu_{\lambda}^{*}(\zeta).
\end{equation} 
Then
\begin{equation}\label{asympdN}
\lim_{N\rightarrow\infty}\frac{d_{N}}{N^2}=v_{f,\lambda}.
\end{equation}
Additionally, if $\{\widehat{m}_{j,N}\}_{j=1}^{l_{N}}\subset[0,R]$ is a collection of numbers such that 
\begin{equation}\label{attaindN}
d_{N}=E_{l_{N},f}(x_{1,N},\ldots,x_{l_{N},N},\widehat{m}_{1,N},\ldots,\widehat{m}_{l_{N},N}),\qquad\mbox{for all}\,\,\,\,N\geq 1,
\end{equation}
then the sequence of measures $\frac{1}{l_{N}}\sum_{j=1}^{l_{N}}\widehat{m}_{j,N}\,\delta_{x_{j,N}}$ converges in the weak-star topology to $\mu_{\lambda}^{*}$.
\end{lemma}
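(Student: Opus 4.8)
The plan is to derive \eqref{asympdN} by combining the lower bound $\liminf_{N\to\infty} d_{N}/N^{2}\geq v_{f,\lambda}$ from \eqref{liminfdN} with a matching upper bound obtained by inserting the prescribed masses $\{m_{j,N}\}$ into the discrete functional. Since $d_{N}\leq E_{l_{N},f}(x_{1,N},\ldots,x_{l_{N},N},m_{1,N},\ldots,m_{l_{N},N})$ by the definition of $d_{N}$, it is enough to prove that $\frac{1}{N^{2}}E_{l_{N},f}(x_{1,N},\ldots,x_{l_{N},N},m_{1,N},\ldots,m_{l_{N},N})\to v_{f,\lambda}$. Once \eqref{asympdN} is established, the weak-star convergence of the optimal measures $\widehat{\mu}_{N}=\frac{1}{l_{N}}\sum_{j}\widehat{m}_{j,N}\,\delta_{x_{j,N}}$ to $\mu_{\lambda}^{*}$ follows from compactness of $\mathcal{B}_{R}(K)$ together with the uniqueness assertion in Theorem~\ref{theo:fund:constr}.

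For the upper bound I would use the representation \eqref{repGreenfunc} with $B=K$, writing $g(z,\zeta)=\log\frac{1}{|z-\zeta|}+h(z,\zeta)$ with $h$ continuous, hence bounded, on $K\times K$. Splitting the Green-energy part of $E_{l_{N},f}$ accordingly, the logarithmic contribution $\frac{1}{l_{N}^{2}}\sum_{i\neq j}m_{i,N}\,m_{j,N}\log\frac{1}{|x_{i,N}-x_{j,N}|}$ converges to $\iint\log\frac{1}{|z-\zeta|}\,d\mu_{\lambda}^{*}\,d\mu_{\lambda}^{*}$ by the standing hypothesis \eqref{hypothesislim}. For the $h$-part, write $\frac{1}{l_{N}^{2}}\sum_{i\neq j}m_{i,N}\,m_{j,N}\,h(x_{i,N},x_{j,N})=\iint h\,d\mu_{N}\,d\mu_{N}-\frac{1}{l_{N}^{2}}\sum_{j}m_{j,N}^{2}\,h(x_{j,N},x_{j,N})$; the subtracted diagonal sum is $O(l_{N}^{-1})$ since $0\leq m_{j,N}\leq R$ and $h$ is bounded, whereas $\iint h\,d\mu_{N}\,d\mu_{N}\to\iint h\,d\mu_{\lambda}^{*}\,d\mu_{\lambda}^{*}$ because $\mu_{N}\otimes\mu_{N}\to\mu_{\lambda}^{*}\otimes\mu_{\lambda}^{*}$ weak-star (the total masses being uniformly bounded by $R$) and $h$ is continuous on $K\times K$. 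Adding the two pieces gives $\frac{1}{l_{N}^{2}}\sum_{i\neq j}m_{i,N}\,m_{j,N}\,g(x_{i,N},x_{j,N})\to\iint g\,d\mu_{\lambda}^{*}\,d\mu_{\lambda}^{*}$, while the linear part equals $\frac{2}{l_{N}}\sum_{j}m_{j,N}\,f(x_{j,N})=2\int f\,d\mu_{N}\to 2\int f\,d\mu_{\lambda}^{*}$ by continuity of $f$. Hence $\frac{1}{l_{N}^{2}}E_{l_{N},f}(\ldots)\to J_{f}(\mu_{\lambda}^{*})=v_{f,\lambda}$, and since $l_{N}/N\to 1$ the same limit holds with $l_{N}^{2}$ replaced by $N^{2}$; combined with \eqref{liminfdN} this yields \eqref{asympdN}.

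For the convergence of the optimizers, observe that $\|\widehat{\mu}_{N}\|\leq R$, so $\widehat{\mu}_{N}\in\mathcal{B}_{R}(K)$, which is weak-star compact; thus it suffices to show that every weak-star convergent subsequence $\widehat{\mu}_{N_{k}}\to\mu$ has $\mu=\mu_{\lambda}^{*}$. By Lemma~\ref{prelimlem}, $\mu\leq R\lambda$. Using the truncated kernel $g_{M}$ from \eqref{deftruncGK} and repeating the estimate leading to \eqref{halfestimate} (lower semicontinuity of $f$, continuity of $g_{M}$, and $m_{j,N}\leq R$ to absorb the diagonal $O(N_{k}^{-1})$ error), one gets $\iint g_{M}\,d\mu\,d\mu+2\int f\,d\mu\leq\lim_{k}d_{N_{k}}/N_{k}^{2}=v_{f,\lambda}$; letting $M\to\infty$ and applying the Monotone Convergence Theorem gives $J_{f}(\mu)\leq v_{f,\lambda}$. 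On the other hand $0\leq\mu\leq R\lambda$ forces $J_{f}(\mu)\geq v_{f,\lambda}$, so $J_{f}(\mu)=v_{f,\lambda}$ and therefore $\mu=\mu_{\lambda}^{*}$ by the uniqueness part of Theorem~\ref{theo:fund:constr}. Hence $\widehat{\mu}_{N}\to\mu_{\lambda}^{*}$ weak-star, as claimed in \eqref{attaindN}.

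The only genuinely delicate point is the diagonal singularity of the Green kernel: since the measures $\mu_{N}$ carry atoms, one cannot pass to the limit directly in $\iint g\,d\mu_{N}\,d\mu_{N}$, and hypothesis \eqref{hypothesislim} is precisely what supplies control of the singular part. The continuous remainder $h$ and the continuous external field $f$ are then handled by routine weak-star convergence, the lower bound is already in hand from \eqref{liminfdN}, and the passage from the energy asymptotics to weak-star convergence of the minimizers follows the same compactness-plus-uniqueness scheme used earlier in the paper.
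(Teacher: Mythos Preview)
Your proposal is correct and follows essentially the same approach as the paper: decompose $g=\log\frac{1}{|\cdot|}+h$ via \eqref{repGreenfunc}, use hypothesis \eqref{hypothesislim} for the logarithmic part and weak-star convergence plus continuity for the $h$- and $f$-parts to get $\limsup d_{N}/N^{2}\leq v_{f,\lambda}$, then combine with \eqref{liminfdN}; for the optimizers, pass to a weak-star limit, invoke Lemma~\ref{prelimlem} for the constraint $\mu\leq R\lambda$, and use the truncated-kernel estimate plus uniqueness in Theorem~\ref{theo:fund:constr}. Your write-up is in fact slightly more explicit than the paper's in two places (the diagonal subtraction for the $h$-sum and the appeal to Lemma~\ref{prelimlem} for the limit measure), but the argument is the same.
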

\begin{proof}
In virtue of \eqref{repGreenfunc}, we have
\begin{equation}\label{decompGreen}
g(z,\zeta)=\log\frac{1}{|z-\zeta|}+h(z,\zeta), \qquad (z,\zeta)\in K\times K,
\end{equation}
where $h(z,\zeta)$ is continuous. Hence
\[
\lim_{N\rightarrow\infty}\iint h(z,\zeta)\,d\mu_{N}(z)\,d\mu_{N}(\zeta)=\iint h(z,\zeta)\,d\mu_{\lambda}^{*}(z)\,d\mu_{\lambda}^{*}(\zeta),
\]
which is clearly equivalent to
\begin{equation}\label{limpart1}
\lim_{N\rightarrow\infty}\frac{1}{l_{N}^{2}}\sum_{1\leq i\neq j\leq l_{N}}m_{i,N}\,m_{j,N}\,h(x_{i,N},x_{j,N})=\iint h(z,\zeta)\,d\mu_{\lambda}^{*}(z)\,d\mu_{\lambda}^{*}(\zeta).
\end{equation}
Since $f$ is continuous on $K$, we have
\begin{equation}\label{limpart2}
\lim_{N\rightarrow\infty}\int f\,d\mu_{N}=\lim_{N\rightarrow\infty}\frac{1}{l_{N}}\sum_{i=1}^{l_{N}}m_{i,N}\,f(x_{i,N})=\int f\,d\mu_{\lambda}^{*}.
\end{equation}
We deduce from \eqref{decompGreen}, \eqref{hypothesislim}, \eqref{limpart1}, and \eqref{limpart2} that
\begin{equation}\label{keylimit}
\lim_{N\rightarrow\infty}\frac{E_{l_{N},f}(x_{1,N},\ldots,x_{l_{N},N},m_{1,N},\ldots,m_{l_{N},N})}{N^2}=J_{f}(\mu_{\lambda}^{*})=v_{f,\lambda}.
\end{equation}
Since $d_{N}\leq E_{l_{N},f}(x_{1,N},\ldots,x_{l_{N},N},m_{1,N},\ldots,m_{l_{N},N})$, from \eqref{keylimit} we get $\limsup_{N\rightarrow\infty} d_{N}/N^2\leq v_{f,\lambda}$, which together with \eqref{liminfdN} implies \eqref{asympdN}.

Now assume that \eqref{attaindN} holds for some numbers $\{\widehat{m}_{j,N}\}_{j=1}^{l_{N}}\subset [0,R]$, and let us show that $\tau_{N}:=\frac{1}{l_{N}}\sum_{j=1}^{l_{N}}\widehat{m}_{j,N}\,\delta_{x_{j,N}}$ converges to $\mu_{\lambda}^{*}$. Let $\tau$ be a weak-star limit point of the sequence $(\tau_{N})$, and let $(\tau_{N_{k}})$ be a subsequence converging to $\tau$. Then, for each $M>0$ fixed, since $f$ is continuous, arguing as in \eqref{estimate} we obtain
\begin{gather*}
\iint g_{M}(z,\zeta)\,d\tau(z)\,d\tau(\zeta)+2\int f\,d\tau=\lim_{k\rightarrow\infty}\left(\iint g_{M}(z,\zeta)\,d\tau_{N_{k}}(z)\,d\tau_{N_{k}}(\zeta)+2\int f\,d\tau_{N_{k}}\right)\\
\leq \lim_{k\rightarrow\infty}\left(\frac{E_{l_{N_{k}},f}(x_{1,N_{k}},\ldots,x_{l_{N_{k}},N_{k}},\widehat{m}_{1,N_{k}},\ldots,\widehat{m}_{l_{N_{k}},N_{k}})}{N_{k}^2}+\mathcal{O}(N_{k}^{-1})\right)=\lim_{k\rightarrow\infty}\frac{d_{N_{k}}}{N_{k}^2}=v_{f,\lambda}.
\end{gather*}
So 
\[
\iint g_{M}(z,\zeta)\,d\tau(z)\,d\tau(\zeta)+2\int f\,d\tau\leq v_{f,\lambda},\qquad\mbox{for all}\,\,M>0.
\]
Letting $M\rightarrow\infty$ we get $J_{f}(\tau)\leq v_{f,\lambda}$, so $J_{f}(\tau)=v_{f,\lambda}$. By the uniqueness of the minimizer, we obtain $\tau=\mu_{\lambda}^{*}$. Since any weak-star limit point of $(\tau_{N})$ equals $\mu_{\lambda}^{*}$, the sequence $(\tau_{N})$ converges to $\mu_{\lambda}^{*}$. 
\end{proof}

\begin{lemma}\label{lemreferee}
If the logarithmic potential $U^{\lambda}$ of the reference measure $\lambda$ in property $\mathrm{P1)}$ is continuous on $K$, then we have
\begin{equation}\label{specialassump}
\lim_{\epsilon\rightarrow 0}\sup_{x\in K}\int_{\{t\in K: |t-x|<\epsilon\}}\log\frac{1}{|x-t|}\,d\lambda(t)=0.
\end{equation}
\end{lemma}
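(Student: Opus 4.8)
The plan is to compare the quantity in \eqref{specialassump} with a version in which the sharp cutoff is smoothed, and then apply Dini's theorem on the compact set $K$. Abbreviate $\phi_{\epsilon}(x):=\int_{\{t\in K:\,|t-x|<\epsilon\}}\log\frac{1}{|x-t|}\,d\lambda(t)$; for $0<\epsilon<1$ the integrand is nonnegative on the domain of integration, so $\phi_{\epsilon}(x)\ge 0$, and we must show $\sup_{x\in K}\phi_{\epsilon}(x)\to 0$. Fix a continuous non-increasing function $\chi:[0,\infty)\to[0,1]$ with $\chi\equiv 1$ on $[0,1]$ and $\chi\equiv 0$ on $[2,\infty)$, and for $0<\epsilon<1/2$ set $\chi_{\epsilon}(r):=\chi(r/\epsilon)$ and
\[
\psi_{\epsilon}(x):=\int_{K}\chi_{\epsilon}(|x-t|)\,\log\frac{1}{|x-t|}\,d\lambda(t),\qquad x\in K.
\]
Since $\chi_{\epsilon}$ is supported in $[0,2\epsilon]\subset[0,1)$, equals $1$ on $[0,\epsilon]$, and $\log\frac{1}{|x-t|}\ge 0$ whenever $|x-t|\le 2\epsilon$, we have $0\le\phi_{\epsilon}(x)\le\psi_{\epsilon}(x)$ for all $x\in K$, so it is enough to prove that $\sup_{x\in K}\psi_{\epsilon}(x)\to 0$ as $\epsilon\to 0^{+}$.

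First I would check that each $\psi_{\epsilon}$ is continuous on $K$. Writing
\[
\psi_{\epsilon}(x)=U^{\lambda}(x)-\int_{K}\bigl(1-\chi_{\epsilon}(|x-t|)\bigr)\log\frac{1}{|x-t|}\,d\lambda(t),
\]
the kernel $(x,t)\mapsto\bigl(1-\chi_{\epsilon}(|x-t|)\bigr)\log\frac{1}{|x-t|}$ vanishes for $|x-t|\le\epsilon$ and is a product of continuous functions for $|x-t|\ge\epsilon$, the two descriptions matching on $|x-t|=\epsilon$; hence it is continuous (and bounded) on $K\times K$, so the last integral depends continuously on $x$. Since $U^{\lambda}$ is continuous (hence finite) on $K$ by hypothesis, $\psi_{\epsilon}\in C(K)$.

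Next, $\psi_{\epsilon}$ is monotone in $\epsilon$: if $\epsilon'\le\epsilon<1/2$ then $\chi_{\epsilon'}(r)=\chi(r/\epsilon')\le\chi(r/\epsilon)=\chi_{\epsilon}(r)$ because $\chi$ is non-increasing, and multiplying by $\log\frac{1}{|x-t|}\ge 0$ (valid on $\{|x-t|\le 2\epsilon\}$, which contains the supports of both cutoffs) gives $\psi_{\epsilon'}\le\psi_{\epsilon}$ on $K$. Also, for each fixed $x\in K$ one has $\psi_{\epsilon}(x)\to 0$ as $\epsilon\to 0^{+}$: from $U^{\lambda}(x)<\infty$ we get $\lambda(\{x\})=0$ and $\mathbf{1}_{\{|x-\cdot|<1\}}\log\frac{1}{|x-\cdot|}\in L^{1}(\lambda)$, while the integrand $\chi_{\epsilon}(|x-t|)\log\frac{1}{|x-t|}$ tends to $0$ for $\lambda$-a.e.\ $t$ and is dominated by this integrable function, so dominated convergence applies.

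Finally, for any sequence $\epsilon_{n}\downarrow 0$ the functions $\psi_{\epsilon_{n}}\in C(K)$ decrease pointwise on the compact set $K$ to the continuous function $0$, so Dini's theorem yields $\sup_{x\in K}\psi_{\epsilon_{n}}(x)\to 0$; combined with the monotonicity of $\epsilon\mapsto\sup_{x\in K}\psi_{\epsilon}(x)$ this gives $\sup_{x\in K}\psi_{\epsilon}(x)\to 0$ as $\epsilon\to 0^{+}$, and the sandwich $0\le\phi_{\epsilon}\le\psi_{\epsilon}$ proves \eqref{specialassump}. The only step requiring care is the continuity of $\psi_{\epsilon}$: the sharp indicator in \eqref{specialassump} makes $\phi_{\epsilon}$ itself possibly discontinuous in $x$, and the smooth cutoff $\chi_{\epsilon}$ is introduced precisely to restore continuity (using the continuity of $U^{\lambda}$) while keeping the bound $\phi_{\epsilon}\le\psi_{\epsilon}$; the remaining arguments are routine.
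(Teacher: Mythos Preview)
Your proof is correct. It differs substantially from the paper's argument, which proceeds by contradiction: assuming \eqref{specialassump} fails, the paper extracts sequences $\epsilon_{n}\downarrow 0$ and $x_{n}\to x$ along which the integral stays $\ge\delta>0$, rewrites this as $U^{\lambda}(x_{n})-U^{\lambda_{n}}(x_{n})\ge\delta$ for the restrictions $\lambda_{n}=\lambda|_{\{|t-x_{n}|\ge\epsilon_{n}\}}$, observes $\lambda_{n}\to\lambda$ weakly, and invokes the principle of descent ($\liminf U^{\lambda_{n}}(x_{n})\ge U^{\lambda}(x)$) together with the continuity of $U^{\lambda}$ to reach a contradiction.

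Your route is more elementary and direct: you replace the sharp cutoff by a continuous one so that the decomposition $\psi_{\epsilon}=U^{\lambda}-(\text{integral of a bounded continuous kernel})$ yields $\psi_{\epsilon}\in C(K)$ from the hypothesis, and then the monotone family $\{\psi_{\epsilon}\}$ falls under Dini's theorem. This avoids any appeal to the principle of descent and keeps the argument inside elementary real analysis; the cost is the extra bookkeeping with the mollifier, whereas the paper's proof is shorter but leans on a classical potential-theoretic lemma.
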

\begin{proof}
Assume that $U^{\lambda}$ is continuous on $K$ and \eqref{specialassump} does not hold. Then there exist $\delta>0$ and sequences $\epsilon_{n}\downarrow 0$ and $(x_{n})_{n=1}^{\infty}\subset K$ such that
\begin{equation}\label{eq:ineqdelta}
\int_{\{t\in K: |t-x_{n}|<\epsilon_{n}\}}\log\frac{1}{|x_{n}-t|}\,d\lambda(t)\geq \delta,\qquad \mbox{for all}\,\,n\geq 1.
\end{equation}
We may assume without loss of generality that the sequence $(x_{n})$ converges to a point $x\in K$. Consider the sequence of positive measures $\lambda_{n}$, obtained by restricting $\lambda$:
\[
\lambda_{n}=\lambda|_{\{t\in K: |t-x_{n}|\geq \epsilon_{n}\}}.
\]
For any continuous function $h\in C(K)$, we have
\begin{equation}\label{estweakconv}
\left|\int_{K} h\,d\lambda_{n}-\int_{K} h\,d\lambda\right|\leq \|h\|_{K}\,\lambda(\{t\in K: |t-x|\leq \epsilon_{n}+|x_{n}-x|\}).
\end{equation}
Since $\lambda$ cannot have an atom at $x$ due to the continuity of $U^{\lambda}$, it follows that the right-hand side of \eqref{estweakconv} approaches zero, and so $\lambda_{n}$ converges to $\lambda$ in the weak-star topology.

The condition \eqref{eq:ineqdelta} is equivalent to 
\begin{equation}\label{eq:deltaineq}
U^{\lambda}(x_{n})-U^{\lambda_{n}}(x_{n})\geq \delta,
\end{equation}
and by the principle of descent (cf. Theorem 1.3 in \cite{Landkof}) we have 
\[
\liminf_{n\rightarrow\infty} U^{\lambda_{n}}(x_{n})\geq U^{\lambda}(x),
\]
which contradicts \eqref{eq:deltaineq}.
\end{proof}

\noindent\textbf{Proof of Theorem~\ref{theo:main:upcon}:} From the proof of Lemma~\ref{prelimlem}, we know that if we take $m_{j,N}:=N \mu_{\lambda}^{*}(V_{j,N})$, $1\leq j\leq l_{N}$, then the sequence of measures $$\mu_{N}:=\frac{1}{l_{N}}\sum_{j=1}^{l_{N}} m_{j,N}\,\delta_{x_{j,N}}$$ converges to $\mu_{\lambda}^{*}$ in the weak-star topology. So it follows from Lemma~\ref{lemmasuffice} that \eqref{asympdNnew} and the convergence of $\widehat{\mu}_{N}$ to $\mu_{\lambda}^{*}$ will be justified if we prove 
\begin{equation}\label{goallimit}
\lim_{N\rightarrow\infty}\frac{1}{l_{N}^2}\sum_{1\leq i\neq j\leq l_{N}}m_{i,N}\,m_{j,N}\,\log\frac{1}{|x_{i,N}-x_{j,N}|}=\iint\log\frac{1}{|z-t|}\,d\mu_{\lambda}^{*}(z)\,d\mu_{\lambda}^{*}(t).
\end{equation}
By assumption, the logarithmic potential $U^{\lambda}(z)$ of the reference measure $\lambda$ is continuous on $K$.  Since $\mu_{\lambda}^{*}\leq R\lambda$, this implies by Lemma~5.2 in \cite{DragnevSaff} that $U^{\mu_{\lambda}^{*}}(z)$
is also continuous on $K$. So we have
\[
\lim_{N\rightarrow\infty}\int U^{\mu_{\lambda}^{*}}(z)\,d\mu_{N}(z)=\int U^{\mu_{\lambda}^{*}}(z)\,d\mu_{\lambda}^{*}(z),
\]
equivalently
\begin{align*}
\lim_{N\rightarrow\infty}\frac{1}{l_{N}}\sum_{j=1}^{l_{N}}m_{j,N}\,U^{\mu_{\lambda}^{*}}(x_{j,N}) & =\lim_{N\rightarrow\infty}\frac{1}{l_{N}}\sum_{j=1}^{l_{N}}m_{j,N}\,\int\log\frac{1}{|x_{j,N}-t|}\,d\mu_{\lambda}^{*}(t)\\
& =\iint\log\frac{1}{|z-t|}\,d\mu_{\lambda}^{*}(z)\,d\mu_{\lambda}^{*}(t).
\end{align*} 
The function $\log\frac{1}{|x_{j,N}-t|}$ is integrable with respect to $\mu_{\lambda}^{*}$, and since $V_{1,N},\ldots,V_{l_{N},N}$ is a partition of $K$, we have
\[
\int\log\frac{1}{|x_{j,N}-t|}\,d\mu_{\lambda}^{*}(t)=\sum_{i=1}^{l_{N}}\int_{V_{i,N}}\log\frac{1}{|x_{j,N}-t|}\,d\mu_{\lambda}^{*}(t).
\]
So we have
\begin{equation}\label{partialproof1}
\lim_{N\rightarrow\infty}\frac{1}{l_{N}}\sum_{1\leq i,j\leq l_{N}}m_{j,N}\int_{V_{i,N}}\log\frac{1}{|x_{j,N}-t|}\,d\mu_{\lambda}^{*}(t)=\iint\log\frac{1}{|z-t|}\,d\mu_{\lambda}^{*}(z)\,d\mu_{\lambda}^{*}(t).
\end{equation}
Let
\begin{align*}
u_{N} & :=\frac{1}{l_{N}^2}\sum_{1\leq i\neq j\leq l_{N}}m_{i,N}\,m_{j,N}\,\log\frac{1}{|x_{i,N}-x_{j,N}|},\\
v_{N} & :=\frac{1}{l_{N}}\sum_{1\leq i,j\leq l_{N}}m_{j,N}\int_{V_{i,N}}\log\frac{1}{|x_{j,N}-t|}\,d\mu_{\lambda}^{*}(t).
\end{align*}
Then, from \eqref{partialproof1} we deduce that the proof of \eqref{goallimit} will be finished if we justify the limit
\begin{equation}\label{partialproof2}
\lim_{N\rightarrow\infty} (u_{N}-v_{N})=0.
\end{equation}

We have $K\subset \Gamma$, so it is clear that we may assume that the points $x_{i,N}\in V_{i,N}$ are labeled so that 
\begin{align*}
x_{i,N} & = \gamma(s_{i,N}), \qquad s_{i,N}\in[a,b],\\
s_{i,N} & < s_{i+1,N},\qquad \mbox{for all}\,\,\,1\leq i\leq l_{N}-1. 
\end{align*}
We write
\[
u_{N}=u_{N,1}+u_{N,2}
\]
where
\begin{align*}
u_{N,1} & =\frac{1}{l_{N}^2}\sum_{|i-j|\geq 3}m_{i,N}\, m_{j,N} \log\frac{1}{|x_{i,N}-x_{j,N}|},\\
u_{N,2} & =\frac{1}{l_{N}^2}\sum_{1\leq |i-j|\leq 2}m_{i,N}\, m_{j,N} \log\frac{1}{|x_{i,N}-x_{j,N}|}.
\end{align*}
By property \textrm{P3)}, we have
\[
\log\frac{1}{\mathrm{diam}(K)}\leq \log\frac{1}{|x_{i,N}-x_{j,N}|}\leq \log(N/C),\qquad i\neq j,
\]
which clearly implies
\[
\lim_{N\rightarrow\infty} u_{N,2}=0.
\]
We also use the decomposition
\[
v_{N}=v_{N,1}+v_{N,2}
\]
where
\begin{align*}
v_{N,1} & =\frac{1}{l_{N}}\sum_{|i-j|\geq 3} m_{j,N}\int_{V_{i,N}}\log\frac{1}{|x_{j,N}-t|}\,d\mu_{\lambda}^{*}(t),\\
v_{N,2} & =\frac{1}{l_{N}}\sum_{0\leq |i-j|\leq 2}m_{j,N}\int_{V_{i,N}}\log\frac{1}{|x_{j,N}-t|}\,d\mu_{\lambda}^{*}(t).
\end{align*}
Our next step is to prove
\begin{equation}\label{asympvN2zero}
\lim_{N\rightarrow\infty}v_{N,2}=0.
\end{equation}

First we handle the terms in $v_{N,2}$ with $i=j$. By property P2), $\mathrm{diam}(V_{j,N})\leq \kappa_{N}$ for all $j$ and $\lim_{N\rightarrow\infty}\kappa_{N}=0$. So for all $N$ large enough ($\kappa_{N}<1$ suffices), using $\mu_{\lambda}^{*}\leq R\lambda$ we get 
\begin{align*}
0 & \leq \frac{1}{R}\int_{V_{j,N}}\log\frac{1}{|x_{j,N}-t|}\,d\mu_{\lambda}^{*}(t)\leq\int_{V_{j,N}}\log\frac{1}{|x_{j,N}-t|}\,d\lambda(t)\\
& \leq \sup_{x\in K}\int_{\{t\in K:|t-x|\leq \kappa_{N}\}}\log\frac{1}{|x-t|}d\lambda(t),
\end{align*}
valid for all $1\leq j\leq l_{N}$, so by \eqref{specialassump} we obtain
\begin{equation}\label{partialproof3}
\lim_{N\rightarrow\infty}\frac{1}{l_{N}}\sum_{j=1}^{l_{N}}m_{j,N}\int_{V_{j,N}}\log\frac{1}{|x_{j,N}-t|}\,d\mu_{\lambda}^{*}(t)=0.
\end{equation}

Let $W_{i,N}:=\overline{\mathrm{Co}(\gamma^{-1}(V_{i,N}))}=[\alpha_{i,N},\beta_{i,N}]$ be the closed convex hull of $\gamma^{-1}(V_{i,N})\subset[a,b]$. We claim that $\gamma(\alpha_{i,N}), \gamma(\beta_{i,N})\in \overline{V_{i,N}}\subset K$. Indeed, if $\alpha_{i,N}=\beta_{i,N}$, then $\{\alpha_{i,N}\}=\gamma^{-1}(V_{i,N})\subset\gamma^{-1}(K)$ and the claim is trivially true. Suppose that $\alpha_{i,N}<\beta_{i,N}$. Then for every $\epsilon>0$ such that $\alpha_{i,N}<\alpha_{i,N}+\epsilon<\beta_{i,N}$, we must have $[\alpha_{i,N},\alpha_{i,N}+\epsilon)\cap\gamma^{-1}(V_{i,N})\neq\emptyset$, otherwise we have $\gamma^{-1}(V_{i,N})\subset[\alpha_{i,N}+\epsilon,\beta_{i,N}]$, which contradicts the definition of $W_{i,N}$. So there exists a sequence of points in $\gamma^{-1}(V_{i,N})$ that converges to $\alpha_{i,N}$, therefore $\gamma(\alpha_{i,N})\in \overline{V_{i,N}}\subset K$. In a similar way we show that $\gamma(\beta_{i,N})\in\overline{V_{i,N}}\subset K$. 

Assume that $1\leq|i-j|\leq 2$. Recall that $\gamma(s_{j,N})=x_{j,N}\in V_{j,N}$ and by assumption we have $\mathrm{Co}(\gamma^{-1}(V_{j,N}))\cap\mathrm{Co}(\gamma^{-1}(V_{i,N}))=\emptyset$. So the point $s_{j,N}$ satisfies either $s_{j,N}\leq \alpha_{i,N}$ or $s_{j,N}\geq \beta_{i,N}$. Suppose first that $s_{j,N}\leq \alpha_{i,N}$. Take an arbitrary $s\in W_{i,N}$, and let $y:=\gamma(s)\in\Gamma$. Let $\rho>0$ be the constant in the local monotonicity property of $\Gamma$ (see paragraph before Theorem \ref{theo:main:upcon}). Let $I_{y}$ be the interval satisfying $\gamma(I_{y})=B(y,\rho)\cap\Gamma$. Observe that $s_{j,N}\leq \alpha_{i,N}\leq s$. If $s_{j,N}\notin I_{y}$, then it is easy to see that
\[
|x_{j,N}-y|=|\gamma(s_{j,N})-y|\geq \rho.
\] 
If $s_{j,N}\in I_{y}$, then by the local monotonicity property of the parametrization we have
\[
|x_{j,N}-y|=|\gamma(s_{j,N})-y|\geq |\gamma(\alpha_{i,N})-y|.
\]
So we conclude that
\[
|x_{j,N}-y|\geq \min\{\rho,|\gamma(\alpha_{i,N})-y|\}\qquad \mbox{for all}\,\,y\in V_{i,N}.
\]
This implies 
\[
\log\frac{1}{\mathrm{diam}(K)}\leq \log\frac{1}{|x_{j,N}-t|}\leq\log\left(\frac{1}{\min\{\rho,|\gamma(\alpha_{i,N})-t|\}}\right),\qquad t\in V_{i,N}.
\]
If now $N$ is large enough so that $\kappa_{N}<\min\{\rho,1\}$, then $\mbox{diam}(V_{i,N})<\rho$ and therefore we have $|\gamma(\alpha_{i,N})-t|<\rho$ for all $t\in V_{i,N}$. So for all $N$ large enough such that $\kappa_{N}<\min\{\rho,1\}$, we have
\[
\log\frac{1}{\mathrm{diam}(K)}\leq \log\frac{1}{|x_{j,N}-t|}\leq\log\frac{1}{|\gamma(\alpha_{i,N})-t|},\qquad t\in V_{i,N},
\]
and using again $\mu_{\lambda}^{*}\leq R\lambda$ we obtain 
\begin{gather*}
\mu_{\lambda}^{*}(V_{i,N})\log\frac{1}{\mathrm{diam}(K)}\leq \int_{V_{i,N}}\log\frac{1}{|x_{j,N}-t|}\,d\mu_{\lambda}^{*}(t)\leq\int_{V_{i,N}}\log\frac{1}{|\gamma(\alpha_{i,N})-t|}\,d\mu_{\lambda}^{*}(t)\\
\leq R\int_{V_{i,N}}\log\frac{1}{|\gamma(\alpha_{i,N})-t|}\,d\lambda(t)
\leq R\sup_{x\in K}\int_{\{t\in K:|t-x|\leq \kappa_{N}\}}\log\frac{1}{|x-t|}\,d\lambda(t).
\end{gather*}
The same estimate is valid if $s_{j,N}\geq \beta_{i,N}$ (using $\beta_{i,N}$ instead of $\alpha_{i,N}$ in the argument). So from \eqref{specialassump} we deduce that
\begin{equation}\label{partialproof4}
\lim_{N\rightarrow\infty}\frac{1}{l_{N}}\sum_{1\leq |i-j|\leq 2}m_{j,N}\int_{V_{i,N}}\log\frac{1}{|x_{j,N}-t|}\,d\mu_{\lambda}^{*}(t)=0.
\end{equation}
We conclude from \eqref{partialproof3} and \eqref{partialproof4} that \eqref{asympvN2zero} holds.

So \eqref{partialproof2} will be justified if we prove
\begin{equation}\label{partialproof5}
\lim_{N\rightarrow\infty}\left(\frac{l_{N}}{N} u_{N,1}-v_{N,1}\right)=0.
\end{equation}
We have
\begin{equation}\label{partialproof6}
\left|\frac{l_{N}}{N} u_{N,1}-v_{N,1}\right|\leq \frac{1}{l_{N}}\sum_{j=1}^{l_{N}}m_{j,N}\Big(\sum_{i:|i-j|\geq 3}\Big|\frac{m_{i,N}}{N}\log\frac{1}{|x_{i,N}-x_{j,N}|}-\int_{V_{i,N}}\log\frac{1}{|x_{j,N}-t|}\,d\mu_{\lambda}^{*}(t)\Big|\Big).
\end{equation}

The intervals $\mathrm{Co}(\gamma^{-1}(V_{i,N}))$ and $\mathrm{Co}(\gamma^{-1}(V_{i+1,N}))$ are disjoint, they contain the points $s_{i,N}$ and $s_{i+1,N}$ respectively, and $s_{i,N}<s_{i+1,N}$. It follows that the intervals $W_{i,N}$ and $W_{i+1,N}$ have at most one endpoint in common and $W_{i,N}$ is located to the left of $W_{i+1,N}$. This implies $\beta_{i,N}\leq s_{i+1,N}$ and $s_{i,N}\leq \alpha_{i+1,N}$ for all $i$. Then $\beta_{i,N}\leq s_{i+1,N}<s_{i+2,N}\leq \alpha_{i+3,N}$ and therefore the intervals $W_{i,N}=[\alpha_{i,N},\beta_{i,N}]$ and $W_{i+3,N}=[\alpha_{i+3,N},\beta_{i+3,N}]$ are disjoint.

Suppose now that $|i-j|\geq 3$. Since the intervals $W_{i,N}$ and $W_{j,N}$ are disjoint, the function $\log\frac{1}{|x_{j,N}-\gamma(s)|}$ is continuous on the segment $W_{i,N}$. Therefore 
\[
\mu_{\lambda}^{*}(V_{i,N})\min_{s\in W_{i,N}} \log\frac{1}{|x_{j,N}-\gamma(s)|}\leq \int_{V_{i,N}}\log\frac{1}{|x_{j,N}-t|}\,d\mu_{\lambda}^{*}(t)\leq \mu_{\lambda}^{*}(V_{i,N})\max_{s\in W_{i,N}} \log\frac{1}{|x_{j,N}-\gamma(s)|}
\]
hence there exists $\xi_{i,N}\in W_{i,N}$ such that
\[
\int_{V_{i,N}}\log\frac{1}{|x_{j,N}-t|}d\mu_{\lambda}^{*}(t)=\mu_{\lambda}^{*}(V_{i,N})\log\frac{1}{|x_{j,N}-\gamma(\xi_{i,N})|}=\frac{m_{i,N}}{N}\log\frac{1}{|x_{j,N}-\gamma(\xi_{i,N})|}.
\]
Therefore we obtain
\begin{gather}
\Big|\frac{m_{i,N}}{N}\log\frac{1}{|x_{i,N}-x_{j,N}|}-\int_{V_{i,N}}\log\frac{1}{|x_{j,N}-t|}\,d\mu_{\lambda}^{*}(t)\Big|=\frac{m_{i,N}}{N}\left|\log\left(\frac{|x_{j,N}-\gamma(\xi_{i,N})|}{|x_{j,N}-x_{i,N}|}\right)\right|\notag\\
\leq \frac{R}{N}\left|\log\left(\frac{|x_{j,N}-\gamma(\xi_{i,N})|}{|x_{j,N}-\gamma(s_{i,N})|}\right)\right|.\label{keyineq1}
\end{gather}

Fix $1\leq j\leq l_{N}$. Let $I_{j}$ be the subinterval of $[a,b]$ such that $\gamma(I_{j})=B(x_{j,N},\rho)\cap \Gamma$. We define the following collections of indices:
\begin{align*}
\Delta_{j,1} & :=\{1\leq i\leq l_{N}: |i-j|\geq 3,\,\,\,W_{i,N}\subset I_{j}\},\\
\Delta_{j,2} & :=\{1\leq i\leq l_{N}: |i-j|\geq 3,\,\,\,W_{i,N}\not\subset I_{j}\}.
\end{align*}
Obviously $\Delta_{j,1}$ and $\Delta_{j,2}$ depend on $N$ but we omit this index in the notation. Suppose that $i\in \Delta_{j,1}$. Since $s_{i,N}, \xi_{i,N}\in W_{i,N}=[\alpha_{i,N},\beta_{i,N}]\subset I_{j}$ and $W_{i,N}$ is completely located either to the left or to the right of the point $s_{j,N}$, by the monotonicity of the function $r\mapsto |\gamma(r)-x_{j,N}|$, $r\in W_{i,N}$, we obtain
\begin{align}
\left|\log\left(\frac{|x_{j,N}-\gamma(\xi_{i,N})|}{|x_{j,N}-\gamma(s_{i,N})|}\right)\right| & \leq \log\left(\frac{|x_{j,N}-\gamma(\alpha_{i,N})|}{|x_{j,N}-\gamma(\beta_{i,N})|}\right),\qquad i\in \Delta_{j,1},\,\,\,i<j,\label{keyineq2}\\
\left|\log\left(\frac{|x_{j,N}-\gamma(\xi_{i,N})|}{|x_{j,N}-\gamma(s_{i,N})|}\right)\right| & \leq \log\left(\frac{|x_{j,N}-\gamma(\beta_{i,N})|}{|x_{j,N}-\gamma(\alpha_{i,N})|}\right),\qquad i\in \Delta_{j,1},\,\,\,i>j.\label{keyineq3}
\end{align}
So from \eqref{keyineq1}, \eqref{keyineq2}, and \eqref{keyineq3} we deduce that
\begin{gather*}
\sum_{i\in\Delta_{j,1}}\Big|\frac{m_{i,N}}{N}\log\frac{1}{|x_{i,N}-x_{j,N}|}-\int_{V_{i,N}}\log\frac{1}{|x_{j,N}-t|}\,d\mu_{\lambda}^{*}(t)\Big|\\
\leq \sum_{\substack{i\in\Delta_{j,1}\\i<j}}\frac{R}{N}\log\left(\frac{|x_{j,N}-\gamma(\alpha_{i,N})|}{|x_{j,N}-\gamma(\beta_{i,N})|}\right)+\sum_{\substack{i\in\Delta_{j,1}\\i>j}}\frac{R}{N}\log\left(\frac{|x_{j,N}-\gamma(\beta_{i,N})|}{|x_{j,N}-\gamma(\alpha_{i,N})|}\right).
\end{gather*}
If $i_{1}=\min\{i\in \Delta_{j,1},\,\,i<j\}$ and $i_{2}=\max\{i\in\Delta_{j,1},\,\,i<j\}$, then by the monotonicity of the logarithm we obtain
\[
\sum_{\substack{i\in\Delta_{j,1}\\i<j}}\log\left(\frac{|x_{j,N}-\gamma(\alpha_{i,N})|}{|x_{j,N}-\gamma(\beta_{i,N})|}\right)\leq\log\left(\frac{|x_{j,N}-\gamma(\alpha_{i_{1},N})|}{|x_{j,N}-\gamma(\beta_{i_{2},N})|}\right)\leq \log\left(\frac{\mathrm{diam}(\Gamma)N}{C}\right)
\]
where in the last inequality we used 
\[
|x_{j,N}-\gamma(\beta_{i_{2},N})|\geq |x_{j,N}-x_{j-1,N}|\geq \frac{C}{N},
\]
and $C$ is the constant in $\textrm{P3})$. In the same manner we can show that
\[
\sum_{\substack{i\in\Delta_{j,1}\\i>j}}\log\left(\frac{|x_{j,N}-\gamma(\beta_{i,N})|}{|x_{j,N}-\gamma(\alpha_{i,N})|}\right)\leq\log\left(\frac{\mathrm{diam}(\Gamma)N}{C}\right). 
\]
We conclude that
\begin{equation}\label{keyineq4}
\sum_{i\in\Delta_{j,1}}\Big|\frac{m_{i,N}}{N}\log\frac{1}{|x_{i,N}-x_{j,N}|}-\int_{V_{i,N}}\log\frac{1}{|x_{j,N}-t|}\,d\mu_{\lambda}^{*}(t)\Big|\leq \frac{2R}{N}\log\left(\frac{\mathrm{diam}(\Gamma)N}{C}\right).
\end{equation}
Note that this bound is valid for all $1\leq j\leq l_{N}$.   

Now we analyze the expression on the right-hand side of \eqref{keyineq1} for an index $i\in\Delta_{j,2}$. Let $i\in \Delta_{j,2}$, and let $N$ be large enough so that $\kappa_{N}<\rho/3$. Since $W_{i,N}\not\subset I_{j}$, it is easy to see that there exists a point $s_{i,N}^{*}\in W_{i,N}$ such that 
\begin{equation}\label{keyineq5}
|\gamma(s_{i,N}^{*})-x_{j,N}|\geq \rho.
\end{equation}
We have either $|x_{j,N}-\gamma(\xi_{i,N})|\geq |x_{j,N}-\gamma(s_{i,N})|$ or $|x_{j,N}-\gamma(\xi_{i,N})|<|x_{j,N}-\gamma(s_{i,N})|$. Suppose that the former inequality is true. Then applying the triangle inequality we obtain
\begin{gather}
\left|\log\left(\frac{|x_{j,N}-\gamma(\xi_{i,N})|}{|x_{j,N}-\gamma(s_{i,N})|}\right)\right|=\log\left(\frac{|x_{j,N}-\gamma(\xi_{i,N})|}{|x_{j,N}-\gamma(s_{i,N})|}\right)\notag\\
\leq \log\left(\frac{|x_{j,N}-\gamma(s_{i,N})|+|\gamma(s_{i,N})-\gamma(\xi_{i,N})|}{|x_{j,N}-\gamma(s_{i,N})|}\right)=\log\left(1+\frac{|\gamma(s_{i,N})-\gamma(\xi_{i,N})|}{|x_{j,N}-\gamma(s_{i,N})|}\right)\notag\\
\leq \frac{|\gamma(s_{i,N})-\gamma(\xi_{i,N})|}{|x_{j,N}-\gamma(s_{i,N})|}\label{keyineq6}
\end{gather}
where in the last step we used the inequality $\log(1+x)\leq x$ for $x\geq 0$. Since $\mathrm{diam}(V_{i,N})\leq \kappa_{N}<\rho/3$ and $x_{i,N}=\gamma(s_{i,N})\in V_{i,N}$, we have
\[
V_{i,N}\subset B(x_{i,N},\rho/3).
\]
Since $$\gamma(\alpha_{i,N}), \gamma(\beta_{i,N})\in \overline{V_{i,N}}\subset B(x_{i,N},\rho),$$ it follows that
\[
\gamma(\alpha_{i,N}), \gamma(\beta_{i,N})\in B(x_{i,N},\rho)\cap \Gamma=\gamma(I_{x_{i,N}}),
\]
therefore $\alpha_{i,N}, \beta_{i,N}\in I_{x_{i,N}}$, and so $W_{i,N}=[\alpha_{i,N},\beta_{i,N}]\subset I_{x_{i,N}}$. Since $\xi_{i,N}\in W_{i,N}$, by the monotonicity of the function $r\mapsto |x_{i,N}-\gamma(r)|$ for $r\in[\alpha_{i,N},s_{i,N}]$ and $r\in[s_{i,N},\beta_{i,N}]$ we obtain 
\begin{equation}\label{keyineq7}
|x_{i,N}-\gamma(\xi_{i,N})|\leq \max\{|x_{i,N}-\gamma(\alpha_{i,N})|,|x_{i,N}-\gamma(\beta_{i,N})|\}\leq \mathrm{diam}(\overline{V_{i,N}})\leq \kappa_{N}.
\end{equation}
Since $s_{i,N}^{*}\in W_{i,N}$, we also obtain the inequality
\begin{equation}\label{keyineq8}
|x_{i,N}-\gamma(s_{i,N}^{*})|\leq \kappa_{N}< \frac{\rho}{3}.
\end{equation}
It follows from \eqref{keyineq5} and \eqref{keyineq8} that
\[
|x_{j,N}-x_{i,N}|>\frac{2\rho}{3}
\]
which together with \eqref{keyineq6} and \eqref{keyineq7} gives the bound
\[
\left|\log\left(\frac{|x_{j,N}-\gamma(\xi_{i,N})|}{|x_{j,N}-\gamma(s_{i,N})|}\right)\right|\leq\frac{|\gamma(s_{i,N})-\gamma(\xi_{i,N})|}{|x_{j,N}-\gamma(s_{i,N})|}=\frac{|x_{i,N}-\gamma(\xi_{i,N})|}{|x_{j,N}-x_{i,N}|}\leq \frac{3 \kappa_{N}}{2\rho} 
\]
for $i\in\Delta_{j,2}$, in the case $|x_{j,N}-\gamma(\xi_{i,N})|\geq |x_{j,N}-\gamma(s_{i,N})|$.

Now assume that $i\in\Delta_{j,2}$, $\kappa_{N}<\rho/3$, and $|x_{j,N}-\gamma(\xi_{i,N})|<|x_{j,N}-\gamma(s_{i,N})|$. In this case we argue similarly, starting from the estimate
\begin{gather}
\left|\log\left(\frac{|x_{j,N}-\gamma(\xi_{i,N})|}{|x_{j,N}-\gamma(s_{i,N})|}\right)\right|=\log\left(\frac{|x_{j,N}-\gamma(s_{i,N})|}{|x_{j,N}-\gamma(\xi_{i,N})|}\right)\notag\\
\leq \log\left(\frac{|x_{j,N}-\gamma(\xi_{i,N})|+|\gamma(\xi_{i,N})-\gamma(s_{i,N})|}{|x_{j,N}-\gamma(\xi_{i,N})|}\right)=\log\left(1+\frac{|\gamma(s_{i,N})-\gamma(\xi_{i,N})|}{|x_{j,N}-\gamma(\xi_{i,N})|}\right)\notag\\
\leq \frac{|\gamma(s_{i,N})-\gamma(\xi_{i,N})|}{|x_{j,N}-\gamma(\xi_{i,N})|}.\label{keyineq9}
\end{gather}
The inequalities \eqref{keyineq5}, \eqref{keyineq7}, \eqref{keyineq8} are still valid. From \eqref{keyineq7} and \eqref{keyineq8} we deduce that 
\[
|\gamma(\xi_{i,N})-\gamma(s_{i,N}^{*})|\leq 2\kappa_{N}<2\rho/3
\]
which together with \eqref{keyineq5} implies that $|x_{j,N}-\gamma(\xi_{i,N})|>\rho/3$. So using \eqref{keyineq9} we obtain the estimate
\[
\left|\log\left(\frac{|x_{j,N}-\gamma(\xi_{i,N})|}{|x_{j,N}-\gamma(s_{i,N})|}\right)\right|\leq\frac{|\gamma(s_{i,N})-\gamma(\xi_{i,N})|}{|x_{j,N}-\gamma(\xi_{i,N})|}\leq \frac{3\kappa_{N}}{\rho}. 
\]
In conclusion, we have shown that for all $i\in\Delta_{j,2}$ and $N$ sufficiently large, we have
\[
\left|\log\left(\frac{|x_{j,N}-\gamma(\xi_{i,N})|}{|x_{j,N}-\gamma(s_{i,N})|}\right)\right|\leq \frac{3\kappa_{N}}{\rho}.
\]
From this inequality and \eqref{keyineq1} we obtain that for $N$ large enough,
\begin{equation}\label{keyineq10}
\sum_{i\in\Delta_{j,2}}\Big|\frac{m_{i,N}}{N}\log\frac{1}{|x_{i,N}-x_{j,N}|}-\int_{V_{i,N}}\log\frac{1}{|x_{j,N}-t|}\,d\mu_{\lambda}^{*}(t)\Big|\leq\sum_{i\in\Delta_{j,2}}\frac{R}{N}\frac{3\kappa_{N}}{\rho}\leq \frac{3R\,\kappa_{N}\, l_{N}}{N\rho}.
\end{equation}
This bound is valid for all $1\leq j\leq l_{N}$. So it follows from \eqref{partialproof6}, \eqref{keyineq4}, and \eqref{keyineq10} that 
\begin{align*}
\left|\frac{l_{N}}{N}u_{N,1}-v_{N,1}\right| & \leq\frac{1}{l_{N}}\sum_{j=1}^{l_{N}}m_{j,N}\left(\frac{2R}{N}\log\left(\frac{\mathrm{diam}(\Gamma)N}{C}\right)+\frac{3R\,\kappa_{N}\,l_{N}}{N\rho}\right)\\
& \leq R\left(\frac{2R}{N}\log\left(\frac{\mathrm{diam}(\Gamma)N}{C}\right)+\frac{3R\,\kappa_{N}\,l_{N}}{N\rho}\right) 
\end{align*}
for $N$ large enough. Since $l_{N}/N\rightarrow 1$ and $\kappa_{N}\rightarrow 0$ as $N\rightarrow\infty$, \eqref{partialproof5} follows.\qed

\begin{remark}
Observe that the hypotheses in Theorem~\ref{theo:main:upcon} regarding $\lambda$, $K$, and the partition sets $V_{j,N}$ are satisfied in Example~\ref{examplemain}.
\end{remark}

\begin{remark}\label{rem:altconfig}
A construction that is slightly different from \eqref{triangarray} but yields identical asymptotic results is as follows. Let 
\[
(y_{j,N})_{1\leq j\leq N},\qquad N\geq 2,
\] 
be a sequence of point configurations on $K$ satisfying the following properties:
\begin{itemize}
\item[1)] $\frac{1}{N}\sum_{j=1}^{N}\delta_{y_{j,N}}$ converges in the weak-star topology to a probability measure $\lambda$ on $K$.
\item[2)] For every $N$ there exists a partition $K=\bigcup_{j=1}^{N} V_{j,N}$ such that $y_{j,N}\in V_{j,N}$ and we have:
\begin{itemize}
\item[i)] $\lambda(V_{j,N})=1/N$ for every $1\leq j\leq N$.
\item[ii)] There exists a sequence $(\kappa_{N})$ of positive numbers that converges to zero such that $\mathrm{diam}(V_{j,N})\leq \kappa_N$ for all $1\leq j\leq N$ except possibly for a number $o(N)$ of indices $j$.
\end{itemize}
\item[3)] Properties $\textrm{P3)}$ and $\textrm{P4)}$ hold. 
\end{itemize}
If we define the constants 
\begin{equation}\label{def:altdN}
d_{N}'=\inf\{E_{N,f}(y_{1,N},\ldots,y_{N,N},m_{1},\ldots,m_{N}): m_{1},\ldots,m_{N}\in I\},
\end{equation}
then it is not difficult to see that under the same assumptions of Theorem~\ref{theo:main:upcon}, the sequences $d_{N}'$ and $\frac{1}{N}\sum_{j=1}^{N}m_{j,N}\,\delta_{y_{j,N}}$ with optimal masses $m_{j,N}$ for \eqref{def:altdN} converge to the limits indicated in that theorem.
\end{remark}

\bigskip

\noindent\textbf{Acknowledgements:} We are grateful to the referee for his/her thoughtful comments which have resulted in an improvement of the content of this paper. In particular, for providing the proof of Lemma \ref{lemreferee} which allowed us to eliminate a redundant assumption in a previous version of Theorem \ref{theo:main:upcon}. A. Tovbis was partially supported by the NSF grant DMS-2009647.

\bigskip

\noindent \textsc{Department of Mathematics, University of Central Florida, 4393 Andromeda Loop North, Orlando, FL 32816, USA} \\
\textit{Email address}: \texttt{abey.lopez-garcia\symbol{'100}ucf.edu}

\bigskip

\noindent \textsc{Department of Mathematics, University of Central Florida, 4393 Andromeda Loop North, Orlando, FL 32816, USA} \\
\textit{Email address}: \texttt{alexander.tovbis\symbol{'100}ucf.edu}
\end{document}